\DeclareSymbolFontAlphabet{\mathbbm}{bbold}%add by Gu
\DeclareSymbolFontAlphabet{\mathbb}{AMSb}%add by Gu
\renewcommand{\phi}{\varphi}
\newcommand{\eps}{\ensuremath{\epsilon}}
\newcommand{\bB}{\ensuremath{\mathcal{B}}}
\newcommand{\fF}{\ensuremath{\mathscr{F}}}
\newcommand{\PP}{\ensuremath{\mathbb{P}}}
\newcommand{\EE}{\ensuremath{\mathbb{E}}}
\newcommand{\RR}{\ensuremath{\mathbb{R}}}
\newcommand{\NN}{\ensuremath{\mathbb{N}}}
\crefname{hypothesis}{Hypothesis}{Hypotheses}
\title{Almost Sure Averaging for Fast-slow Stochastic Differential Equations via Controlled Rough Path \thanks{ Submitted to the editors 25.7.2023
		\funding{B. Pei was partially supported by National Natural Science Foundation of China (NSF) under Grant No. 12172285, Shaanxi Fundamental Science Research Project for Mathematics and Physics under Grant No. 22JSQ027, and Fundamental Research Funds for the Central Universities. Y. Xu was partially supported by Key International (Regional) Joint Research Program of NSF of China under Grant No. 12120101002. B. Pei and B. Schmalfuss thank the Alexander von Humboldt Foundation (Germany) for support.
	}}}
\author{B. Pei\thanks{School of Mathematics and Statistics, Northwestern Polytechnical University, 127 West Youyi Road, Beilin District, 710072, Xi'an, China and Institut f\"{u}r Stochastik,
		Friedrich Schiller Universit{\"a}t, Jena, Ernst Abbe Platz 2, D-77043, Jena, Germany
		(\email{binpei@nwpu.edu.cn}).}
		\and R.Hesse \thanks{Institut f\"{u}r Stochastik,
		Friedrich Schiller Universit{\"a}t, Jena, Ernst Abbe Platz 2, D-77043, Jena, Germany
		(\email{robert.hesse@uni-jena.de}).}
	\and B. Schmalfuss\thanks{Institut f\"{u}r Stochastik,
		Friedrich Schiller Universit{\"a}t, Jena, Ernst Abbe Platz 2, D-77043, Jena, Germany
		(\email{bjoern.schmalfuss@uni-jena.de}).}
	\and Y. Xu\thanks{School of Mathematics and Statistics and MOE Key Laboratory of Complexity Science in Aerospace, Northwestern Polytechnical University, 127 West Youyi Road, Beilin District, 710072, Xi'an, China
		(\email{hsux3@nwpu.edu.cn}).}}
\begin{document}

\maketitle
\begin{abstract}
This paper establishes the averaging method to a coupled system consisting of two stochastic differential equations which has a slow component driven by fractional Brownian motion (FBM) with less regularity $1/3< H \leq 1/2$ and a fast dynamics under additive FBM with Hurst-index $1/3< \hat H \leq 1/2$. We prove that the solution of the slow component converges almost surely to the solution of the corresponding averaged equation using the approach of time discretization and controlled rough path. To do this, we employ the random dynamical system (RDS) to obtain a
stationary solution by an exponentially attracting random fixed point of the RDS generated by the non-Markovian fast component.
\end{abstract}

\begin{keywords}
Almost sure averaging, Controlled rough path, Random fixed points, Fractional Brownian motion, Fast-slow SDEs,
\end{keywords}

\begin{MSCcodes}
60G22, 60H05, 60H15, 34C29.
\end{MSCcodes}

\section{Introduction}\label{s1}
The aim of this article is to address the almost sure averaging for fast-slow stochastic differential equations (SDEs) driven by two fractional Brownian motions (FBMs)
\begin{align}
	\label{eq-org-x}
	dX^\eps_t=& f(X^\eps_t,Y^\eps_t)\,dt+h(X^\eps_t)\,dB(t), X^\eps(0)=X_0 \in \mathbb{R}^n,  \\
	\label{eq-org-y}
	dY^\eps_t= & \frac{1}{\eps}g(X^\eps_t,Y^\eps_t)\,dt +d\hat B(t/\eps), Y^\eps(0)=Y_0 \in \mathbb{R}^m
\end{align}
where $0<\eps \leq 1$ is a small parameter, $X_t^\eps \in \mathbb{R}^n$ and $Y_t^\eps \in \mathbb{R}^m$ are the state variables, $B(t)=(B^1(t),\ldots,B^{d_{1}}(t))^{T}$ with Hurst-index $ H\in(1/3, 1/2]$ is a $d_1$-dimensional FBM and $\hat B(t)=(\hat B^{1}(t),\ldots,\hat B^{d_{2}}(t))^{T}$ with Hurst-index $\hat H\in(1/3, 1/2]$ is a  $d_2$-dimensional FBM. $B(t)$ and $\hat B(t)$ are independent FBMs. $f,h,g$ are sufficiently regular.

Rewrite (\ref{eq-org-x})-(\ref{eq-org-y})  in the following form
\begin{align}\label{eq-5}
\bigg(\begin{array}{c}{dX^\eps_t} \\ {dY^\eps_t}\end{array}\bigg)=\bigg(\begin{array}{c}{f(X^\eps_t,Y^\eps_t)} \\ {\frac{1}{\eps}g(X^\eps_t,Y^\eps_t)}\end{array}\bigg)dt+\bigg(
\begin{matrix}
h(X^\eps_t) & O \\
O & {\rm id}
\end{matrix}
\bigg)\bigg(\begin{array}{c}{dB(t)} \\ {d\hat B (t/\eps)}\end{array}\bigg).
\end{align}
A standard scheme to solve above (\ref{eq-5}) is: (i) to give meaning to the integral; (ii) to apply some fixed point result. To deal with item (i) we need an integration theory that is satisfactory in the sense that it allows
to work with signals and unknowns of suitable regularity. To deal with item (ii) we need the space of solutions to (\ref{eq-5}) to have some nice metric
structure. Rough paths theory  allows us to solve SDEs path-wise, not relying on It\^o calculus and its solutions are often defined in the sense of Lyons \cite{lyons1994differential}. It seems that (\ref{eq-5}) can be solved in the path-wise approach by taking a realization of the driving path. It is well-known that the above diffusion term integration in (\ref{eq-5}) does not make sense unless we impose additional structure on $h$,
which is exactly the notion of controlled rough paths as introduced by Gubinelli \cite{gubinelli2004controlling}.% So, the key point to solve (\ref{eq-5}) is how to define the joint rough path with $B$ and $\hat B$. %Fortunately, it is possible to define the multidimensional drivening processes as Gaussian rough paths, that is we can enhance $B$ and $\hat B$ into the Gaussian rough paths (see Section 2.4). 
 (\ref{eq-5}) can be understood in the sense of controlled rough paths. The approach to this problem has the advantage of offering a clear break between the
deterministic rough path calculus and the SDEs driven by two FBMs.

It is highly desirable and useful for applications \cite{hasselmann1976stochastic} to find a simplified equation which governs the evolution of the system over the long time scale. The averaging principle for fast-slow SDEs, which was initiated by Khasminskii \cite{khasminskii1968on}, provides a good approximation for the slow component. This research topic seems still quite active, although many papers have been written (see \cite{liu2020averaging,chueshov2005averaging,freidlin2012random} for examples). It should also be recalled that this averaging principle was generalized to various
kind of stochastic systems including jump-type SDEs \cite{xu2011averaging,givon2007strong},
distribution-dependent SDEs \cite{rockner2021strong,hong2022strong,shen2022averaging}, functional-type SDEs \cite{wu2022fast}, fractional driven SDEs  \cite{hairer2019averaging,pei2020averaging,pei2020pathwise,xu2015stochastic,xu2017stochastic}, among others. 
Since the widely separated time scales and the FBMs in both the slow and fast motions, the fast-slow SDEs driven by two FBMs turn out to be more difficult to deal with than in pure BM case. 
Hairer and Li \cite{hairer2019averaging} considered fast-slow systems
where the slow system is driven by FBM and proved the convergence to the averaged solution took place in probability. Pei et al. \cite{pei2020averaging} answered
affirmatively that an averaging principle still holds for fast-slow mixed SDEs involving both Brownian motion (BM) and FBM $H\in(1/2,1)$ in the mean square sense. Gao et al. consider a qualitatively different approximation problem for rough equations, see \cite{cao2023wong,gao2021rough}.

In all the preceding works mentioned above, the driving noises are either (semi) martingales or have sufficient regularity. One naturally wonders what happens to the averaging principle when both the driving noises are not the martingale and have less regularity. Pei et al. \cite{pei2021averaging} devoted to studying the averaging principle for a fast-slow system of rough differential equation (RDE) driven by mixed fractional Brownian rough path and proved that when the fast dynamics is Markovian, the slow component driven by FBM with Hurst-index $1/3< H \le 1/2 $ converges to the solution of the corresponding averaged equation in the $L^1$-sense. Later, Inahama \cite{inahama2022averaging} proved the strong averaging principle in the framework of controlled path theory for a fast-slow system of RDEs where the slow and the fast component of the system are driven by a rather general random rough path and Brownian rough path, respectively.

Though it is quite difficult to study the case where the noise in the fast equation is also an FBM. Li and Sieber \cite{li2022slow} made a contribution in that direction by establishing a quantitative quenched ergodic theorem on the conditional evolution of the process of the fast dynamics with frozen slow input. Meanwhile, Pei et al. \cite{pei2023almost} studied the almost sure averaging for a coupled system
consisting of two evolution equations which has a slow component driven by FBM with the Hurst-index $H_1>
1/2$ and a fast component driven by additive FBM with the Hurst-index $ H_2\in(1-H_1,1)$
replacing the invariant measures by the random fixed points of non-Markovian fast motion which are path-wise exponentially attracting.

In the framework of controlled rough paths, the present paper applies methods from \cite{pei2023almost} to deal with finite-dimensional system  (\ref{eq-org-x})-(\ref{eq-org-y}) where the case FBM $H>1/2$ in the slow component to the case that the FBM with less regularity $1/3< H \leq 1/2$ and the fast dynamics is still non-Markovian.
There are several major technical difficulties one encounters, when trying to establish an averaging procedure which proves that the solution of the slow component converges almost surely to the solution of the corresponding averaged equation. A first
conceptual difficulty is to employ the random dynamical system (RDS) to obtain a
stationary solution by an exponentially attracting random fixed point of the RDS generated by the non-Markovian fast component. The second main, more technical, obstacle we encounter is due to the fact that we want to include two FBMs with less regularity $1/3< H,\hat H \leq 1/2$. Rough path techniques provide a very natural framework to
obtain the unique solution and some nice norm estimates.

The paper is organized as follows. In Section \ref{s2} we
review some basic concepts of rough paths, rough integrals, RDE and FBM that are used in the paper. Section \ref{s3} contains the existence and uniqueness of a path-wise solution and almost sure averaging to the fast-slow SDEs driven by two FBMs. The random fixed points
for the RDS generated by (\ref{eq-org-y}) are considered in Section \ref{s3-2}. An averaging principle which proves that the solution of the slow component converges almost surely to the solution of the corresponding averaged equation was established in Section \ref{s4}. Note that a constant $C$ appears very often and it may change form line to line.

\section{Preliminaries}\label{s2}
In this section we review some basic concepts of rough paths, rough integrals, RDE and FBM that will be used later.
\subsection{Rough paths}
In this subsection, we will recall some facts about rough paths.  For a compact time interval  $I=[T_{1}, T_{2}] \subset \mathbb{R}$ , we write  $|I|=T_{2}-T_{1}$  and  $I^{2}=\{(s, t) \in I \times I: s \leq t\}$.
Continuous linear maps from $V$ to $W$ form a Banach space, denoted
by $\mathcal{L}(V, W)$.
We could choose for the following a general setting where $V,W$ are Banach spaces. But for our applications we choose $V\cong\mathbb{R}^m, W\cong \mathbb{R}^n$. We also then consider $\mathbb{R}^{m \times n}\cong\mathcal{L}(V, W)\cong V\otimes W\cong\mathcal{L}(\mathbb{R}^{m}, \mathbb{R}^{n})$.

We denote by $\mathcal{C}(I ; W)$  the space of all continuous functions  $y: I \rightarrow W $ equipped with the norm  $\|\cdot\|_{\infty, I}$  given by  $\|y\|_{\infty, I}=\sup _{t \in I}\|y_{t}\|$, where  $\|\cdot\|$  is the Euclidean norm, and let  $\mathcal{C}^1(I, W)$  denote the space of the first order continuously differentiable $W$-valued functions on  $I$.

We write  $y_{s, t}=y_{t}-y_{s}$. For $p \geq 1$, we denote by $ \mathcal{C}^{p-{\rm var}}(I ; W)$  the set of all continuous functions $ y: I \rightarrow W$  which have a finite $p$-variation
$$\interleave y \interleave_{p-{\rm var}, I}=\Big(\sup _{\mathcal{P}(I)} \sum_{[t_{i}, t_{i+1}] \in \mathcal{P}(I)}\|y_{t_{i+1}}-y_{t_{i}}\|^{p}\Big)^{\frac{1}{p}}<\infty$$
where  $\mathcal{P}(I)$  is a partition of the interval $I$.
Furthermore, we equip this space with the norm
$$\|y\|_{p-{\rm var}, I}:=\|y_{T_1}\|+\interleave y \interleave_{p-{\rm var}, I} .$$
This norm is equivalent to
$$\|y\|_{p-{\rm var}, I}:=\|y\|_{\infty, I}+\interleave y \interleave_{p-{\rm var}, I} .$$
For properties of the  $p$-variation norm we refer to \cite{lyons1994differential}. Also for each  $0<\alpha<1$ , we denote by  $\mathcal{C}^{\alpha}(I, W) $ the space of H\"older-continuous functions with exponent  $\alpha $ on  $I$  equipped with the norm
$$\|y\|_{\alpha, I}:=\left\|y_{T_1}\right\|+\interleave y \interleave_{\alpha, I}$$
or the equivalent norm
$$\|y\|_{\alpha, I}:=\|y\|_{\infty, I}+\interleave y \interleave_{\alpha, I}$$
where $\interleave y \interleave_{\alpha, I}:=\sup _{s, t \in I, s<t} \frac{\left\|y_{s, t}\right\|}{(t-s)^{\alpha}}<\infty $.

Similarly for functions of two variables $R_{\cdot,\cdot}:I^2\rightarrow W$, we define
$$\interleave R \interleave_{p-{\rm var}, I^2}=\Big(\sup _{\mathcal{P}(I)} \sum_{[t_{i}, t_{i+1}] \in \mathcal{P}(I)}\|R_{t_{i}, t_{i+1}}\|^{p}\Big)^{\frac{1}{p}}.$$

\begin{definition}{\rm \cite[p.
14]{friz2020course}}
For  $\alpha \in\left(\frac{1}{3}, \frac{1}{2}\right]$, a pair  $\mathbf{x}=(\mathrm{x}, \mathbb{X}) \in \mathcal{C}^{\alpha}(I, V)\times \mathcal{C}^{2 \alpha}(I^{2}, V\otimes V) $ is called rough path if it satisfies the analytic properties
$$
\interleave \mathrm{x} \interleave_{\alpha}=\sup _{s<t \in I} \frac{\|\mathrm{x}_{s, t}\|}{|t-s|^{\alpha}}<\infty, \quad \interleave \mathbb{X} \interleave_{2 \alpha}=\sup _{s<t \in I} \frac{\|\mathbb{X}_{s, t}\|}{|t-s|^{2 \alpha}}<\infty$$
and Chen's relation
$$
\mathbb{X}_{s, t}=\mathbb{X}_{s, u}+\mathbb{X}_{u, t}+\mathrm{x}_{s, u} \otimes \mathrm{x}_{u, t}$$
for  s $\leq u \leq t \in I $, we denote by the Banach space $\mathscr{C}^{\alpha}(I, V)$  the space of rough paths equipped with the homogeneous rough path norm {\rm \cite[p.18]{friz2020course}}
\begin{eqnarray*}
\interleave \mathbf{x} \interleave_{\alpha, I}=\interleave \mathrm{x} \interleave_{\alpha, I}+\interleave \mathbb{X} \interleave_{2 \alpha, I^{2}}^{\frac{1}{2}}.
\end{eqnarray*}

In addition, for any  $\mathrm{x} \in \mathcal{C}^1(I,V)$, there is a canonical lift  $S(\mathrm{x}):=(\mathrm{x}, \mathbb{X})$  in  $\mathscr{C}^{\alpha}(I, V) $ defined as
\begin{eqnarray}
\mathbb{X}_{s, t}^{k, l}=\int_{s}^{t} \int_{s}^{r} d \mathrm{x}_{r^{\prime}}^{k} d \mathrm{x}_{r}^{l}=\int_{s}^{t} \mathrm{x}_{s,r}^{k} d \mathrm{x}_{r}^{l}, \quad s<t \in I \quad {\rm and} \quad
  k, l \in\{1, \cdots, m\} .
\end{eqnarray}
We denote the space $\mathscr{C}_{g}^{0,\alpha}(I, V) $  as the geometric rough space, i.e. the closure of the canonical lift  $S(\mathrm{x}), \mathrm{x} \in \mathcal{C}^1(I,V)$ in $\mathscr{C}^{\alpha}(I, V) $.
\end{definition}

The first component  $\mathrm{x} $  is the path component and the second component  $\mathbb{X}$  is called L\'evy-area or the second order process. In addition, for $p=\frac{1}{\alpha}$, we have the  $p$-variation norm
\begin{eqnarray}\label{xxx}
\interleave \mathbf{x} \interleave_{p-{\rm var}, I}=(\interleave \mathrm{x} \interleave_{p-{\rm var}, I}^{p}+\interleave \mathbb{X} \interleave_{q-{\rm var}, I^{2}}^{q})^{\frac{1}{p}}<\infty
\end{eqnarray}
where $q=\frac{p}{2}$,
to describe a rough path  $\mathbf{x}$.
Let  $\mathscr{C}^{p-{\rm var}}(I, V)$  denote the space of all rough paths which have a finite  $p$-variation norm. It is clear that $\mathscr{C}^{\alpha}(I, V) \subset \mathscr{C}^{p-{\rm var}}(I,V)$ .

\begin{lemma}\label{var}{\rm (cf. \cite[Exercise 5.11]{friz2010multidimensional} and \cite[Lemma 2.1]{cong2018nonautonomous})}
Let $\mathrm{x} \in \mathcal{C}^{p-{\rm var}}([s,t];V)$, \, $p > 1$. For any partition $\mathcal{P}(s,t)$ of the interval $[s,t]$ given by $s=u_1<u_2< \cdots < u_n=t$, we have
$$\sum_{i=1}^{n-1} \interleave \mathrm{x} \interleave_{p-{\rm var}, [u_i,u_{i+1}]}^{p} \leq \interleave \mathrm{x} \interleave_{p-{\rm var}, [s,t]}^{p} \leq (n-1)^{p-1} \sum_{i=1}^{n-1} \interleave \mathrm{x} \interleave_{p-{\rm var}, [u_i,u_{i+1}]}^{p}.$$
\end{lemma}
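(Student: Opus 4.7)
The plan is to derive both inequalities directly from the definition
\[
\interleave \mathrm{x}\interleave_{p-{\rm var},I}^p = \sup_{\mathcal{P}(I)} \sum_{[t_i,t_{i+1}] \in \mathcal{P}(I)} \|\mathrm{x}_{t_{i},t_{i+1}}\|^p,
\]
invoking on one side only the subadditivity of $p$-variation sums under concatenation of partitions, and on the other the elementary power-mean inequality $(a_1+\cdots+a_m)^p \le m^{p-1}(a_1^p+\cdots+a_m^p)$, valid for $p\ge 1$.

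For the left inequality I would argue by concatenation. Given arbitrary partitions $\mathcal{Q}_i$ of $[u_i,u_{i+1}]$ for $i=1,\dots,n-1$, their union $\mathcal{Q}=\bigcup_{i=1}^{n-1} \mathcal{Q}_i$ is itself a partition of $[s,t]$, so that
\[
\sum_{i=1}^{n-1}\;\sum_{[a,b]\in\mathcal{Q}_i}\|\mathrm{x}_{a,b}\|^p \;=\; \sum_{[a,b]\in\mathcal{Q}}\|\mathrm{x}_{a,b}\|^p \;\le\; \interleave \mathrm{x}\interleave_{p-{\rm var},[s,t]}^p.
\]
Since each $\mathcal{Q}_i$ can be chosen independently, passing to the supremum on each sub-interval separately on the left yields the claimed lower bound.

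For the right inequality, let $\mathcal{P}=\{s=v_0<\cdots<v_N=t\}$ be any partition of $[s,t]$ and form the common refinement $\mathcal{P}^*=\mathcal{P}\cup\{u_1,\dots,u_n\}$. For each $k$, let $m_k$ denote the number of sub-intervals into which $[v_k,v_{k+1}]$ is split by $\mathcal{P}^*$; each such sub-interval is contained in a unique $[u_i,u_{i+1}]$. Since the open interval $(v_k,v_{k+1})$ can contain at most the $n-2$ inner break\-points $u_2,\dots,u_{n-1}$, one obtains the uniform bound $m_k\le n-1$. Writing $v_k=w_0^k<w_1^k<\cdots<w_{m_k}^k=v_{k+1}$ for the subdivision of $[v_k,v_{k+1}]$ induced by $\mathcal{P}^*$, the triangle inequality followed by the power-mean inequality applied in a single shot gives
\[
\|\mathrm{x}_{v_k,v_{k+1}}\|^p \le m_k^{p-1}\sum_{j=0}^{m_k-1}\|\mathrm{x}_{w_j^k,w_{j+1}^k}\|^p \le (n-1)^{p-1}\sum_{j=0}^{m_k-1}\|\mathrm{x}_{w_j^k,w_{j+1}^k}\|^p.
\]
Summing over $k$ and then regrouping the $\mathcal{P}^*$-sub-intervals by the enclosing piece $[u_i,u_{i+1}]$ (each sub-interval lying in exactly one such piece, and forming a partition of it) produces
\[
\sum_{k=0}^{N-1}\|\mathrm{x}_{v_k,v_{k+1}}\|^p \le (n-1)^{p-1}\sum_{i=1}^{n-1}\interleave \mathrm{x}\interleave_{p-{\rm var},[u_i,u_{i+1}]}^p.
\]
Taking the supremum over $\mathcal{P}$ on the left then closes the upper bound.

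The only step that requires real care is the uniform bound $m_k\le n-1$: a naive inductive argument which adds the $u_i$ to $\mathcal{P}$ one at a time and iterates the two-term inequality $(a+b)^p\le 2^{p-1}(a^p+b^p)$ would yield a constant of order $2^{(n-2)(p-1)}$, far worse than the sharp $(n-1)^{p-1}$. Applying the power-mean inequality in a single step to all $\mathcal{P}^*$-children of $[v_k,v_{k+1}]$ simultaneously is exactly what delivers the combinatorial factor stated in the lemma; the remaining book\-keeping is routine.
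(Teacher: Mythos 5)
Your proof is correct and is essentially the standard argument behind the references the paper cites for this lemma (the paper itself gives no proof, only the citations to Friz--Victoir and Cong et al.). Both halves are handled the right way: concatenation of arbitrary sub-partitions gives superadditivity for the left inequality, and refinement by the points $u_i$ combined with a single application of the power-mean inequality $(a_1+\cdots+a_m)^p \le m^{p-1}(a_1^p+\cdots+a_m^p)$ to all children of each $[v_k,v_{k+1}]$ at once gives the sharp factor $(n-1)^{p-1}$; your remark that iterating the two-term inequality would only yield $2^{(n-2)(p-1)}$ correctly identifies the one point where care is needed.
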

\begin{definition}{\rm (cf. \cite[p.
84]{friz2020course} and \cite[Lemma 2.3]{cong2018nonautonomous})}
A control on $I$
is a continuous map $\ell$ of $s, t \in I, s \leq t$, into the non-negative reals, and super-additive, i.e
	\begin{enumerate}
\item[(i)] for all  $t \in I$, $\ell_{t, t}=0,$
\item[(ii)] for all $s \leq t \leq u  \in  I,$ $\ell_{s, t}+\ell_{t, u} \leq \ell_{s, u}$.
\end{enumerate}
\end{definition}

The functions $ (s, t) \longrightarrow(t-s)^{\theta} $ with $\theta \geq 1$, $(s, t) \longrightarrow \interleave \mathrm{x} \interleave^{p}_{p-{\rm var}, [s,t]}, p\geq 1$ and
\begin{align}\label{controlex}
(s, t) \longrightarrow \interleave \mathrm{x} \interleave^{\beta p}_{p-{\rm var}, [s,t]} \cdot \interleave \bar{\mathrm{x}} \interleave^{\alpha p}_{p-{\rm var}, [s,t]},\, \alpha+\beta \geq 1
\end{align}
where  $\mathrm{x},\bar{\mathrm{x}}$ are of bounded  $p$-variation norms with $p\geq 1$ on  $I$ are some examples of control function, see \cite[Proposition 5.8, p.80; Exercise 1.9, p.22]{friz2010multidimensional}.

The following lemma gives a useful property of controls in order to estimate the variational norm of  paths and functions of two variables, respectively.

\begin{lemma}\label{control}
Let  $(\ell_j)_{j\in\mathbb{N}^{+}}$  be control functions on  $I$  and $C>0, p,q\geq 1, n \in \mathbb{N}^{+}$. 
	\begin{enumerate}
\item[(i)] For a continuous path $\mathrm{x}:I \rightarrow \mathbb{R}^{d}$  satisfying
$$\|\mathrm{x}_{s,t}\| \leq  C \sum_{j=1}^{n}\ell_{j,s,t}^{1 / p}, \quad \forall s<t \in I$$
one has 
$$\interleave \mathrm{x} \interleave_{p-{\rm var}, [s,t]} \leq  C L^{1 / p}_{n,p} \sum_{j=1}^{n}\ell_{j,s,t}^{1 / p}, \quad \forall s<t \in I $$
where $L_{n,p}\geq 1$.
\item[(ii)]
For continuous functions of two variables $R_{\cdot,\cdot}:I^2\rightarrow \mathbb{R}^{d}$,
satisfying $$\|R_{s,t}\| \leq  C \sum_{j=1}^{n}\ell_{j,s,t}^{1 / q}, \quad \forall s<t \in I, \quad $$
we have
$$\interleave R \interleave_{q-{\rm var}, [s,t]^2} \leq C L^{1 / q}_{n,q} \sum_{j=1}^{n}\ell_{j,s,t}^{1 / q}, \quad \forall s<t \in I$$
where $L_{n,q}\geq 1$.
\end{enumerate}
\end{lemma}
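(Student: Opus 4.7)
The plan is to reduce both parts of the lemma to the super-additivity built into the definition of a control, combined with the elementary power-mean inequality $(\sum_{j=1}^{n}a_{j})^{p}\le n^{p-1}\sum_{j=1}^{n}a_{j}^{p}$, valid for $p\ge 1$ and $a_{j}\ge 0$ (this is just convexity of $x\mapsto x^{p}$ applied to the uniform average).

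For part (i), I would fix a sub-interval $[s,t]\subseteq I$ and an arbitrary partition $s=u_{0}<u_{1}<\cdots<u_{N}=t$. Raising the pointwise hypothesis $\|\mathrm{x}_{u_{i},u_{i+1}}\|\le C\sum_{j=1}^{n}\ell_{j,u_{i},u_{i+1}}^{1/p}$ to the $p$-th power, applying the power-mean inequality inside the bracket, and exchanging the order of summation yields
\begin{align*}
\sum_{i=0}^{N-1}\|\mathrm{x}_{u_{i},u_{i+1}}\|^{p}\;\le\;C^{p}\,n^{p-1}\sum_{j=1}^{n}\sum_{i=0}^{N-1}\ell_{j,u_{i},u_{i+1}}.
\end{align*}
By super-additivity of each control $\ell_{j}$ the inner sum is bounded by $\ell_{j,s,t}$ uniformly in the partition. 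Taking the supremum over partitions, extracting the $p$-th root, and then using the concavity estimate $(\sum_{j}a_{j})^{1/p}\le\sum_{j}a_{j}^{1/p}$ (which holds for $p\ge 1$ since $x\mapsto x^{1/p}$ is subadditive) produces the claimed inequality with the admissible choice $L_{n,p}=n^{p-1}\ge 1$.

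Part (ii) is formally identical at the level of two-variable functions: the $q$-variation of $R$ on $I^{2}$ is, by definition, a supremum over exactly the same type of partition-based $\ell^{q}$-sum, so repeating the calculation verbatim with $q$ in place of $p$ and $R_{u_{i},u_{i+1}}$ in place of $\mathrm{x}_{u_{i},u_{i+1}}$ produces the bound with $L_{n,q}=n^{q-1}\ge 1$. I do not anticipate any real obstacle here, since the lemma is an elementary packaging result; the only point requiring care is the order of operations — super-additivity must be invoked \emph{before} taking the supremum over partitions so that the bound becomes partition-independent, and the concavity step is applied only at the end to match the exact form stated.
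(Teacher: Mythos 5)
Your proposal is correct and follows essentially the same route as the paper: raise the pointwise bound to the $p$-th power, apply the elementary inequality $\bigl(\sum_{j=1}^{n}a_{j}\bigr)^{p}\le L_{n,p}\sum_{j=1}^{n}a_{j}^{p}$, invoke super-additivity of each $\ell_{j}$ before taking the supremum over partitions, and finish with the subadditivity of $x\mapsto x^{1/p}$; your explicit identification $L_{n,p}=n^{p-1}$ is just a concrete instance of the paper's unspecified constant. The paper likewise omits the proof of (ii) as a verbatim repetition with $q$ in place of $p$, so there is nothing to add.
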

\begin{proof}
Similar to {\rm \cite[Proposition 5.10, p.83]{friz2010multidimensional}}, we can prove as follows
\begin{align*}
\begin{split}
\interleave \mathrm{x} \interleave_{p-{\rm var}, [s,t]}&=\Big(\sup _{\mathcal{P}([s,t])} \sum_{[t_{i}, t_{i+1}] \in \mathcal{P}([s,t])}\|x_{t_{i}, t_{i+1}}\|^{p}\Big)^{\frac{1}{p}}\\
& \leq \Big(\sup _{\mathcal{P}([s,t])} \sum_{[t_{i}, t_{i+1}] \in \mathcal{P}([s,t])} C^p \big(\sum_{j=1}^{n}\ell^{1/p}_{j,t_{i}, t_{i+1}}\big)^p \Big)^{\frac{1}{p}}\\
& \leq \Big(\sup _{\mathcal{P}([s,t])} \sum_{[t_{i}, t_{i+1}] \in \mathcal{P}([s,t])} C^p L_{n,p} \sum_{j=1}^{n}\ell_{j,t_{i}, t_{i+1}} \Big)^{\frac{1}{p}}\\
& \leq  C L^{1 / p}_{n,p} \big(\sum_{j=1}^{n}\ell_{j,s,t}\big)^{1 / p} \leq  C L^{1 / p}_{n,p} \sum_{j=1}^{n}\ell_{j,s,t}^{1 / p}
\end{split}
\end{align*}
where we use the elementary inequalities, $a_j \geq 0, j=1, 2, \cdots, n$
\begin{align*}
\begin{split}
\big(\sum_{j=1}^n a_j\big)^p \leq L_{n,p} \sum_{j=1}^n a^p_j, \quad 
\big(\sum_{j=1}^n a_j\big)^{1/p}\leq \sum_{j=1}^n a_j ^{1/p}
\end{split}
\end{align*}
 for some constant $L_{n,p}\geq 1, p\geq 1, n\in \mathbb{N}^{+}$.  We can use similar way to prove  (ii), thus we omit it.
\end{proof}

\subsection{Rough integrals}\label{2.2}
For the sake of the following definition we assume that $y$ takes values in some Banach space, say $\bar{W}$. When it comes to the definition of a rough integral we typically take $\bar W= \mathcal{L}(V, W)$. In the context of RDEs, with solutions in $\bar W = W$, we actually need to integrate
$G(y)$, which will be seen to be controlled by $\mathrm{x}$ for sufficiently smooth coefficients
$G : W \rightarrow \mathcal{L}(V, W).$

Following \cite[Definition 4.6, p.56]{gubinelli2004controlling}, a path  $y \in \mathcal{C}^{\alpha}(I, \bar W)$  is called to be controlled by  $\mathrm{x} \in   \mathcal{C}^{\alpha}(I, V)$  if there exists $y^{\prime} \in \mathcal{C}^{\alpha}(I,  \mathcal{L}(V, \bar W))$, such that the remainder term $R^{y} \in \mathcal{C}^{2 \alpha}(I^{2},\bar W )$ given implicitly through the relation
\begin{eqnarray}\label{yr}
R_{s, t}^{y}=y_{s, t}-y_{s}^{\prime} \mathrm{x}_{s, t}, \quad \forall \, T_1 \leq s \leq t \leq T_2.
\end{eqnarray}
$y^{\prime}$  is called Gubinelli-derivative of  $y$, which is uniquely defined as long as $x$ is truly rough (see \cite[Proposition 6.4]{friz2020course}). Denote by $\mathcal{D}_{\mathrm{x}}^{2 \alpha}(I,\bar W)$  the space of all couples  $(y, y^{\prime})$ that is controlled by $x$, then  $\mathcal{D}_{\mathrm{x}}^{2 \alpha}(I,\bar W) $ is a Banach space equipped with the norm
$$\|(y, y^{\prime})\|_{\mathrm{x}, 2 \alpha, I} :=\|y_{T_1}\|+\|y_{T_1}^{\prime}\|+\interleave (y, y^{\prime}) \interleave_{\mathrm{x}, 2 \alpha, I}$$ where
$$
\interleave(y, y^{\prime})\interleave_{\mathrm{x}, 2 \alpha, I} :=\interleave y^{\prime}\interleave_{\alpha, I}+\interleave R^{y}\interleave_{2 \alpha, I^{2}}.$$

For a fixed rough path  $\mathbf{x}=(\mathrm{x}, \mathbb{X})$  and any controlled rough path  $(y, y^{\prime}) \in   \mathcal{D}_{\mathrm{x}}^{2 \alpha}(I,\bar W),$ it is proved in  \cite[Proposition 1]{gubinelli2004controlling} using the sewing lemma that the rough integral can be defined as
\begin{eqnarray}\label{2.4}
\int_{s}^{t} y_{u} d \mathrm{x}_{u}:=\lim _{|\Pi| \rightarrow 0} \sum_{[u, v] \in \Pi}\left(y_{u}  \mathrm{x}_{u, v}+y_{u}^{\prime} \mathbb{X}_{u, v}\right)
\end{eqnarray}
where we took $\bar W= \mathcal{L}(V,W)$, used the canonical injection  $\mathcal{L}(V,\mathcal{L}(V, W))\hookrightarrow \mathcal{L}(V\otimes V, W) $ in writing $y_{u}^{\prime} \mathbb{X}_{u, v}$ and the limit is taken on all the finite partition  $\Pi$  of  $I$  with  $|\Pi|:=   \max _{[u, v] \in \Pi}|v-u|$. With these notions, the resulting integral (\ref{2.4}) takes values in $W$. Moreover, there exists a constant  $C_{\alpha}=C_{\alpha,|I|}>1$, such that
\begin{align}\label{eq2.5}
\begin{split}
\bigg\|\int_{s}^{t} y_{u} d \mathrm{x}_{u}&-y_{s} \mathrm{x}_{s, t}-y_{s}^{\prime} \mathbb{X}_{s, t}\bigg\| \\
&\leq C_{\alpha}|t-s|^{3 \alpha} (\|\mathrm{x}\|_{\alpha,[s, t]}\interleave R^{y} \interleave_{2 \alpha,[s, t]^{2}}+\|y^{\prime}\|_{\alpha,[s, t]}\|\mathbb{X}\|_{2 \alpha,[s, t]^{2}}) .
\end{split}
\end{align}

From now on, we sometimes simply write  $\|\mathrm{x}\|_{\alpha}$  or  $\|\mathbb{X}\|_{2 \alpha}$  without addressing the domain in  $I $ or $ I^{2}$.
In practice, we sometimes use the  $p$-var norm
\begin{align}\label{2.7}
\|(y, y^{\prime})\|_{\mathrm{x}, p} :=\|y_{T_1}\|+\|y_{T_1}^{\prime}\|+\interleave (y, y^{\prime})\interleave_{\mathrm{x}, p}
\end{align}
where
\begin{align*}
\interleave (y, y^{\prime}) \interleave_{\mathrm{x}, p} :=\interleave y^{\prime} \interleave_{p-{\rm var}}+\interleave R^{y}\interleave_{q-{\rm var}}.
\end{align*}

Thanks to the sewing lemma \cite{friz2020course}, we can use a similar version to (\ref{eq2.5}) under  $p$-var norm as follows.
\begin{align}\label{inq-1}
\begin{split}
\bigg\|\int_{s}^{t} y_{u}&  d \mathrm{x}_{u}-y_{s}  \mathrm{x}_{s, t}-y_{s}^{\prime} \mathbb{X}_{s, t}\bigg\| \\
&\leq C_{p}(\interleave \mathrm{x} \interleave_{p-{\rm var},[s, t]}\interleave R^{y}\interleave_{q-{\rm var},[s, t]^{2}}+\interleave y^{\prime}\interleave_{p-{\rm var},[s, t]}\interleave \mathbb{X} \interleave_{q-{\rm var},[s, t]^{2}})
\end{split}
\end{align}
with constant  $C_{p}>1$  independent of  $\mathbf{x}$  and  $y$ .
\subsection{Rough Differential Equation}\label{2.3}
Consider the RDE
\begin{align}\label{eq-1}
d y_{t}=F(y_{t}) d t+G (y_{t}) d \mathbf{x}_{t}, \quad \forall t \in I, \quad y_{T_1}\in\mathbb{R}^{e}
\end{align}
where $\mathbf{x}$ is a rough path. Such system is understood as a path-wise approach to solve a SDE driven by a H\"older-continuous stochastic process which can be lifted to a rough path.

To study the RDE (\ref{eq-1}), we impose the following assumptions.
	\begin{enumerate}
\item[(H1)]  $F: \mathbb{R}^{e} \rightarrow \mathbb{R}^{e}$  is globally Lipschitz continuous with the Lipschitz constant  $C_{F}$;
\item[(H2)] $G$  belongs to $C_{b}^{3}(\mathbb{R}^{e}, \mathcal{L}(\mathbb{R}^{d},\mathbb{R}^{e})) $ such that
$$C_{G}:=\max \{\|G\|_{\infty},\|DG\|_{\infty},\|D^{2}{G}\|_{\infty},\|D^{3}{G}\|_{\infty}\}<\infty ,$$
\item[(H3)] for a given  $\gamma \in(1/3, 1/2),$ $\mathrm{x}$  belongs to the space  $\mathcal{C}^{\gamma}(I, \mathbb{R}^{d}) $ of all continuous paths which is of finite  $\gamma$-H\"older-norm on an interval  $I$.
\end{enumerate}

By using rough integrals, one would like to interpret the RDE (\ref{eq-1}) by writing it in the integral form
\begin{align}\label{eq-2}
y_{t}=y_{T_1}+\int_{T_1}^{t} F(y_{s}) d s+\int_{T_1}^{t} G(y_{s}) d \mathbf{x}_{s}, \quad \forall  t \in I
\end{align}
for any initial value  $y_{T_1} \in \mathbb{R}^{e}$.
Riedel et al. \cite{riedel2017rough} studied controlled differential equations driven by a rough path (in the sense of T. Lyons) with an
additional, possibly unbounded drift term while $G$ to be bounded and sufficiently smooth and showed that the equation induces a solution flow if the drift
grows at most linearly. Searching for a solution in the Gubinelli sense $(y, y^\prime) \in \mathcal{D}_{\mathrm{x}}^{2 \alpha}(I,\mathbb{R}^{e})$, is possible because for $G: \mathbb{R}^{e} \rightarrow  \mathcal{L}(\mathbb{R}^{d},\mathbb{R}^{e})$ satisfying  (H3), one has
\begin{align*}
(y, y^{\prime}) \in \mathcal{D}_{\mathrm{x}}^{2 \alpha} (I,\mathbb{R}^{e})\Rightarrow &(G(y),D G(y) y^{\prime}) \in \mathcal{D}_{\mathrm{x}}^{2 \alpha}(I,\mathcal{L}(\mathbb{R}^{d},\mathbb{R}^{e})).
\end{align*}
The unique solution in the Gubinelli sense of (\ref{eq-2}) are recently proved in \cite{duc2020controlled} under the assumptions (H1)-(H3), by using the Doss-Sussmann technique  \cite{sussmann1978gap} and the sequence of stopping times in \cite{cass2013integrability}. Namely, for any fixed  $\nu \in(0,1)$  the sequence of stopping times $\{\tau_{i}(\nu, \mathbf{x}, I)\}_{i \in \mathbb{N}}$  is defined by
$$
\tau_{0}=T_1, \quad \tau_{i+1}:=\inf \left\{t>\tau_{i}:\interleave\mathbf{x}\interleave_{p-{\rm var},\left[\tau_{i}, t\right]}=\nu\right\} \wedge T_2 .
$$
Define  $N_{\nu,I,p}(\mathbf{x}) :=\sup \left\{i \in \mathbb{N}: \tau_{i} \leq T_2\right\}$ , then we have a rough estimate
$$
N_{\nu,I,p}(\mathbf{x}) \leq 1+\nu^{-p}\interleave\mathbf{x}\interleave_{p-\mathrm{var}, I}^{p} .
$$

Other studies on continuity and properties of stopping times can also be founded in  \cite[ Section 4]{duc2018exponential}.

\begin{lemma}{\rm (cf. \cite[Theorem 3.8]{duc2020controlled})} \label{solution}
Under the assumptions {\rm (H1)-(H3)}, there exists a unique solution of {\rm (\ref{eq-1})} on any interval  $[T_1, T_2]$. The supremum and  $p$-{\rm var} norms of the solution are estimated as follows
\begin{align*}
\|y\|_{\infty,[T_1, T_2]} \leq &\big(\|y_{T_1}\|+({\|F(0)\|}{C_{F}^{-1}}+{C_{p}^{-1}}) N_{\frac{1}{4C_p C_G},[T_1, T_2],p}(\mathbf{x})\big)e^{4 C_{F}(T_2-T_1)}, \\
 \interleave y, R^{y}\interleave_{p-{\rm var},[T_1, T_2]}
 \leq& \big(\|y_{T_1}\|+({\|F(0)\|}{C_{F}^{-1}}+{C_{p}^{-1}})  N_{\frac{1}{4C_p C_G},[T_1, T_2],p}(\mathbf{x})\big)\\
&\quad \times  e^{4 C_{F}(T_2-T_1)} N_{\frac{1}{4C_p C_G},[T_1, T_2],p}^{\frac{p-1}{p}}(\mathbf{x})-\|y_{T_1}\|
\end{align*}
where $\interleave y, R^{y}\interleave_{p-{\rm var},[T_1, T_2]}=\interleave y \interleave_{p-{\rm var},[T_1, T_2]}+\interleave R^{y}\interleave_{q-{\rm var},[T_1, T_2]^2}$.
\end{lemma}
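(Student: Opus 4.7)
The plan is to combine a local fixed-point argument in the Gubinelli controlled-rough-path space with the stopping-time machinery already alluded to in the excerpt, then to sum the local bounds across the partition defined by the $\tau_i$'s. Throughout I will work with the threshold $\nu := \frac{1}{4 C_p C_G}$ so that on every sub-interval $J_i := [\tau_i, \tau_{i+1}]$ we have $\interleave \mathbf{x} \interleave_{p-{\rm var}, J_i} \leq \nu$, and I will use freely the composition rule that, for $G \in C_b^3$ and $(y,y')$ controlled by $\mathrm{x}$, the pair $(G(y), DG(y) y')$ is again controlled by $\mathrm{x}$ with quantitative bounds depending only on $C_G$ and $\interleave(y,y')\interleave_{\mathrm{x},p}$.

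For the local step on a generic $J_i$ with initial data $(y_{\tau_i}, G(y_{\tau_i})) \in \mathbb{R}^e \times \mathcal{L}(\mathbb{R}^d, \mathbb{R}^e)$, I would introduce the solution map
$$
\mathcal{M}(y, y') := \Big(y_{\tau_i} + \int_{\tau_i}^{\cdot} F(y_r)\, dr + \int_{\tau_i}^{\cdot} G(y_r)\, d\mathbf{x}_r,\; G(y)\Big)
$$
on a suitable ball in the $p$-variation controlled path space. The sewing estimate (\ref{inq-1}) applied to $\int G(y)\, d\mathbf{x}$, together with (H2) and the composition rule, shows that the rough-integral contribution to $\interleave \cdot,\cdot \interleave_{\mathbf{x},p, J_i}$ is bounded by $C_p C_G\, \interleave \mathbf{x} \interleave_{p-{\rm var}, J_i} \cdot (\text{radius of ball}) + (\text{terms independent of the unknown})$. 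The choice $\nu = \frac{1}{4 C_p C_G}$ is exactly what forces this contribution to be at most one quarter of the unknown, while the drift $\int F(y)\, dr$ is handled by (H1) on a sufficiently small interval. This yields a self-mapping on a ball of explicit radius and a strict contraction, hence a unique controlled-path solution on $J_i$. Concatenation along $\tau_0 < \tau_1 < \cdots < \tau_N = T_2$, with $N = N_{\nu, [T_1, T_2], p}(\mathbf{x})$ finite by the rough estimate quoted before Lemma \ref{solution}, gives the unique global solution.

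For the a priori bounds, I would first extract from the local analysis a uniform one-step estimate of the form
$$
\|y_{\tau_{i+1}}\| \leq \|y_{\tau_i}\| + C_F \int_{\tau_i}^{\tau_{i+1}} \|y_r\|\, dr + \|F(0)\| (\tau_{i+1} - \tau_i) + C_p^{-1},
$$
where the $C_p^{-1}$ absorbs the rough-integral contribution on an interval of unit rough-path mass $\nu$. Iterating over $i = 0, \dots, N-1$ and applying a Gronwall-type argument with the drift yields the supremum bound with multiplier $e^{4 C_F (T_2 - T_1)}$. For the $p$-variation bound, the local contraction of the previous step gives, on each $J_i$, $\interleave y, R^y \interleave_{p-{\rm var}, J_i} \leq C (\|y_{\tau_i}\| + \|F(0)\| C_F^{-1} + C_p^{-1})$; summing with Lemma \ref{var} (which produces the factor $N^{(p-1)/p}$ on the right-hand side) and substituting the supremum bound for $\|y_{\tau_i}\|$ produces the stated estimate, subtracting $\|y_{T_1}\|$ to recover the seminorm part.

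The main obstacle is the bookkeeping of constants in the local contraction step, because one must track simultaneously how $C_G$ enters through the composition $(y,y') \mapsto (G(y), DG(y)y')$, how $C_p$ enters through the sewing inequality (\ref{inq-1}), and how $C_F$ enters through the drift integral in a way compatible with the $p$-variation norm, in order to recover exactly the threshold $\nu = 1/(4 C_p C_G)$ and the multiplier $e^{4 C_F(T_2 - T_1)}$. Once these constants line up, the remaining work is combinatorial (telescoping over $N$ intervals and applying Lemma \ref{var} and Gronwall), and the uniqueness of the fixed point on each $J_i$ together with continuity at each $\tau_i$ yields the global uniqueness.
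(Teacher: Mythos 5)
This lemma is not proved in the paper at all: it is imported verbatim from \cite[Theorem 3.8]{duc2020controlled}, and the surrounding text states explicitly that the proof there proceeds by the Doss--Sussmann transformation combined with the greedy sequence of stopping times from \cite{cass2013integrability}. Your proposal keeps the second ingredient (the partition by the $\tau_i$ with threshold $\nu=1/(4C_pC_G)$, the count $N_{\nu,[T_1,T_2],p}(\mathbf{x})$, and the summation via Lemma \ref{var} producing the factor $N^{(p-1)/p}$), but replaces the first by a direct Picard/contraction argument in the controlled-path space. That is a legitimate alternative route, and your bookkeeping for the rough-integral part --- the composition bound for $(G(y),DG(y)y')$, the sewing estimate (\ref{inq-1}), and the choice of $\nu$ forcing the rough contribution below a fixed fraction of the unknown --- is the right one.

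The one genuine soft spot is the drift. The stopping times $\tau_i$ control only the rough-path mass $\interleave \mathbf{x} \interleave_{p-{\rm var},[\tau_i,\tau_{i+1}]}=\nu$; they say nothing about the length $\tau_{i+1}-\tau_i$, which can be as large as $T_2-T_1$. Since $F$ is merely globally Lipschitz and unbounded, the map $y\mapsto\int F(y)\,dr$ contributes a Lipschitz constant of order $C_F(\tau_{i+1}-\tau_i)$ to your solution map $\mathcal{M}$, so $\mathcal{M}$ need not be a strict contraction on $J_i$; the phrase ``handled by (H1) on a sufficiently small interval'' is not available to you, because the interval is small only in rough-path mass, not in time. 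You must either (a) refine each $J_i$ further in time and reconcile the resulting count with the stated constants, (b) use an exponentially weighted norm in $t$ to absorb the drift into the contraction, or (c) do what the cited reference does: first solve the pure rough equation and then reduce the drift to a classical ODE with a Lipschitz field, to which the Gronwall lemma applies on all of $[T_1,T_2]$ --- which is also where the clean multiplier $e^{4C_F(T_2-T_1)}$ and the term $\|F(0)\|C_F^{-1}$ in the stated bounds actually originate. With one of these fixes your outline closes; without it the local fixed-point step fails whenever consecutive stopping times are far apart.
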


\subsection{Fractional Brownian Motion}
Let $B$ be an FBM on $[0,T]$ with values in $\mathbb{R}^{d_1}$ on some probability space and let $\hat{B}$ be another FBM on $\mathbb{R}$ with values in $\mathbb{R}^{d_2}$  independent of $B$. $B$ has the Hurst-index $H\in (\frac13, \frac12]$ and $\hat B$ has the same Hurst-index $\hat H\in (\frac13, \frac12]$.  In particular, $B$ has covariance function
\begin{align}\label{cova}
	R_{B}(s,t)=\frac12 (|t|^{2H}+|s|^{2H}-|t-s|^{2H})Id,\quad
	t,\,s\in [0,T]
\end{align}
where $Id$ is the identity matrix. The covariance of $\hat B$, i.e. $R_{\hat B}(s,t)$, can be defined by (\ref{cova}) replacing $\hat H$ by $H$ and $[0,T]$ by $\mathbb{R}$.

By Bauer \cite[Theorem 38.6]{MR1385460} applied to the FBM we have the canonical versions
$B(\omega_{1})=\omega_1$, $\hat B(\omega_2)=\omega_2$,  for the probability spaces
$$(\mathcal{C}_0([0,T],\RR^{d_1}),\bB(\mathcal{C}_0([0,T],\RR^{d_1})),\PP_{H})\,\,{\rm and}\,\,(\mathcal{C}_0(\RR,\RR^{d_2}),\bB(\mathcal{C}_0(\RR,\RR^{d_2})),\PP_{\hat H})$$
where $\mathcal{C}_0(\RR,\RR^{d_2})$ is the (metrizable) space of continuous functions
on $\RR$ with values in $\RR^{d_2}$ and with value zero at zero equipped
with the compact open topology. $\bB(\mathcal{C}_0(\RR,\RR^{d_2}))$ is the Borel $\sigma$-algebra of $\mathcal{C}_0(\RR,\RR^{d_2})$. $\PP_{\hat H}$ is the Gaussian distribution of the FBM $\hat B$. $\mathcal{C}_0([0,T],\RR^{d_1}),\bB(\mathcal{C}_0([0,T],\RR^{d_1}))$ and $\PP_{H}$ can be defined in a similar way.

Considering $B(\omega_1)=\omega_1$ we have a  set in $\mathcal{C}_{0}([0,T],\mathbb{R}^{d_1})$ of full measure so that $\omega_1$ is $\beta$-H\"older-continuous with $\beta\in(\frac13,H)$. In addition there exists a set of full measure $\mathbb{P}_{H}$ so that the rough path $\boldsymbol{\omega_1}=(\omega_1,\bbomega_1)  \in \mathscr{C}_{g}^{0,\beta}([0, T], \mathbb{R}^{d_1})$ in the framework of Section \ref{2.3} is well defined which follows by \cite[Corollary 10.10]{friz2020course}.
In particular
\begin{align*}
\bbomega^{i,j}_{1,s,t}=&\int_{s}^{t}\omega_{1,s,r}^{i}d \omega_1^j(r), \quad i\ne j\\
\bbomega^{i,i}_{1,s,t}=&\frac{(\omega_{1,s,t}^{i})^2}{2},  \quad i = j .\end{align*}
Note that $\omega_{1}$ has independent components.

Let us denote the intersection of the both sets of full measure from above by $\Omega_1$. Then we consider the restricted probability space with trace-$\sigma$-algebra
with

\[(\Omega_1,\bB(\mathcal{C}_0([0,T];\mathbb{R}^{d_1}))\cap
\Omega_1,\PP_{H}(\cdot\cap\Omega_1))=(\Omega_1,\mathscr{F}_1,\mathbb{P}_{H}).\]
For $\PP_{H}(\cdot\cap\Omega_1)$ we write for simplicity, $\PP_{H}(\cdot)$ in the following and $\mathscr{F}_1$ is the trace-$\sigma$-algebra.

Let $\eta\in (1/3,\hat H)$. Similarly for $\omega_2$, let $\Omega_2$ be the set of $\omega_2$
so that $\omega_2$ is $\eta$-H\"older-continuous over any interval $[T_1,T_2], T_1<T_2 \in \mathbb{R}$ and $\|\omega_{2}(t)\|$
has a sub-linear growth
for $t\rightarrow \pm \infty:$
\[\frac{\|\omega_{2}(t)\|}{t}\rightarrow 0, \quad t\rightarrow \infty.\]

This set has measure one, see Cheridito et al. \cite[Proposition A1]{cheridito2003fractional}.
Then, we have the restricted probability space
\[(\Omega_2,\bB(\mathcal{C}_0(\mathbb R;\mathbb{R}^{d_2}))\cap
\Omega_2,\PP_{\hat H}(\cdot\cap\Omega_2))=(\Omega_2,\mathscr{F}_2,\mathbb{P}_{\hat H})\]
where we will write
$\PP_{\hat H}(\cdot)=\PP_{\hat H}(\cdot\cap\Omega_2)$ and $\mathscr{F}_2$ is the trace$\sigma$-algebra.

Let $\theta$ be the measurable and
measure preserving flow on $$(\mathcal{C}_0(\RR,\RR^{m}),\bB(\mathcal{C}_0(\RR,\RR^{m})),\PP_{\hat H})$$ defined by the Wiener-shift
\[
\theta_t\omega_2(\cdot)=\omega_2(\cdot+t)-\omega_2(t), t\in \RR,\,\omega_2\in \mathcal{C}_0(\RR,\RR^{m}).
\]
In particular we have
\[
\theta_t\theta_s\omega_2=\theta_{t+s}\omega_2,\quad s,\, t\in\RR,\quad \theta_0={\rm
	id}_{\mathcal{C}_0(\RR,\RR^{m})}.
\]

We now introduce a subset of full measure in $\mathcal{C}_0(\RR,\RR^{m})$ which is $(\theta_t)_{t\in\RR}$-invariant.
Note that $\PP_{\hat H}$ is ergodic w.r.t. $\theta$. $\Omega_2$ is $(\theta_t)_{t\in\mathbb{R}}$-invariant.

Hence the $\mathbb{P}_{\hat H}$ from the restricted probability space is ergodic w.r.t. $\theta$ restricted to $\Omega_2$. We define $(\Omega,\mathscr{F},\mathbb{P})=(\Omega_1 \times \Omega_2 ,\mathscr{F}_1\otimes\mathscr{F}_2,\mathbb{P}_{H}\times\mathbb{P}_{\hat H}))$ to be the product probability space of the both probability spaces from above.

Later we replace $\omega_2$ by $\omega_{2,\eps}=\omega_2(\eps^{-1}\cdot),\,\eps \in (0,1]$, which is a (non-standard) FBM with covariance
\[
\frac{1}{2\eps^{2\hat H}}(|t|^{2\hat H}+|s|^{2\hat H}-|t-s|^{2\hat H})Id,\quad
	t,\,s\in \RR.
\]
Note that $\omega_2$ is $\eta$-H\"older continuous and has a sub-linear growth if and only if this holds for $\omega_{2,\eps}$. Hence $\Omega_2$ can be chosen independently of $\eps$.

Then by Theorem \cite[Theorem 10.4]{friz2020course} we can describe the
H\"older
             continuous and canonical  version of $W$
by the
             probability space
             $(\Omega,\fF,\PP)$ with paths
$\omega \in \Omega$ and consider the rough path $\boldsymbol{\omega}=(\omega,\bbomega)  \in \mathscr{C}_{g}^{0,\gamma}([0, T], \mathbb{R}^{d_1+d_2})$ under the framework of rough path theory which was introduced in Sections \ref{2.2} and \ref{2.3}.

\section{System of fast-slow Stochastic Differential Equations}\label{s3}
\subsection{Existence uniqueness theorem and solution norm estimates} In this subsection,
 we are interested in solving the system
\begin{align}\label{slowpath}
dX^\eps_t&= f(X^\eps_t,Y^\eps_t)\,dt+h(X^\eps_t)\,d\omega_1(t), \quad X^\eps(0)=X_0\in \mathbb{R}^n, \\
\label{fastpath}
dY^\eps_t&=\frac{1}{\eps}g(X^\eps_t,Y^\eps_t)\,dt +d\omega_{2,\eps}(t), \quad Y^\eps(0)=Y_0 \in \mathbb{R}^m
\end{align}
where $\omega_1$ is a path of the canonical FBM with Hurst-exponent $H$, and $\omega_2$  is a path of the canonical FBM with Hurst-exponent $\hat H$, in Section \ref{s2}, and $\omega_{2,\eps}(\cdot)=\omega_2(\frac{1}{\eps}\cdot)$.
By the solution of (\ref{slowpath})-(\ref{fastpath}) on $[0,T]$, we mean a process $(X^\eps,Y^\eps)$ which satisfies
\begin{align}\label{slowpathint}
X^\eps_t=& X_0+\int_{0}^{t}f( X^\eps_r,Y^\eps_r)\,dr+\int_{0}^{t}h(X^\eps_r)\,d\boldsymbol{\omega_1}(r)\\
\label{fastpathint}
Y^\eps_t=& Y_0+\frac{1}{\eps}\int_{0}^{t} g( X^\eps_r,Y^\eps_r)\,dr+\int_{0}^{t} \,d\boldsymbol{\omega_{2,\eps}}(r).
\end{align}
where $\boldsymbol{\omega_1}=(\omega_1,\bbomega_1)  \in \mathscr{C}_{g}^{0,\beta}([0, T], \mathbb{R}^{d_1})$ and $\boldsymbol{\omega_{2,\eps}}=(\omega_{2,\eps},\bbomega_{2,\eps})  \in \mathscr{C}_{g}^{0,\eta}([0,T], \mathbb{R}^{d_2})$ and $m=d_2$

To investigate the almost sure averaging of system (\ref{eq-org-x})-(\ref{eq-org-y}), it is essential to obtain unique solution. Thus,  taking $\eps=1$ without loss of generality, for $y_t=(y^1_{t},y^2_{t})\in \mathbb{R}^{n}\times \mathbb{R}^{m}$, we understand (\ref{slowpath})-(\ref{fastpath}) as follows
\begin{eqnarray}\label{3.5}
	d y_t=F (y_t)dt+G(y_t)(d\omega_{1}(t), d \omega_2(t))
\end{eqnarray}
where $y_0=(u^1_{0},y^2_{0})=(X_0,Y_0)$, $F:=(f,g): \mathbb{R}^{n}\times \mathbb{R}^{m} \rightarrow \mathbb{R}^{e}$, $G$ is a an operator such that
\begin{align} G:=\bigg(
\begin{matrix}
h(y^1) & O \\
O & {\rm id}
\end{matrix}
\bigg).
\end{align}

We interpret the equation (\ref{3.5}) in the  form of (\ref{eq-2}):
\begin{align}
	y_t=\bigg(\begin{array}{c}{y^1_{0}} \\ {u^2_{0}}\end{array}\bigg)+\int_{0}^{t}F (y_s)ds+\int_{0}^{t}\bigg(\begin{array}{c}{h(y^1_s)} \\ {O}\end{array}\bigg)d \boldsymbol{\omega_1}(s)+\int_{0}^{t}\bigg(\begin{array}{c}{O} \\ {{\rm id}}\end{array}\bigg)d \boldsymbol{\omega_2}(s).
\end{align}
Then the rough path $\mathbf{x}$ is generated by $(B(\omega_1),\hat B(\omega_2))$ or in canonical form $(\omega_1,\omega_2)$ or $(\omega_1,\omega_{2,\eps})$. This rough path is included in $\mathscr{C}^{0,\gamma}_g(I,\RR^{d_1+d_2})$.

\begin{remark}
$\int_{0}^{t} d \boldsymbol{\omega_2}(s)$ in rough sense equals $\int_{0}^{t}  d  {\omega_2}(s)=\omega_2(t)$. The diffusion coefficient in front of $\boldsymbol{\omega_2}$ is $Id$ with Gubinelli-derivative  $0$.
So according to (\ref{2.7}) we do not need the L\'evy-area of $\boldsymbol{\omega_2}$ for the definition of the integral.
\end{remark}

Now, we assume that the following conditions for the coefficients of the system are fulfilled.
\begin{enumerate}
	\item[(A1)]  $f: \mathbb{R}^{n}\times \mathbb{R}^{m} \rightarrow \mathbb{R}^{n}$  is globally Lipschitz continuous with the Lipschitz constant  $C_{f}$;
\item[(A2)] $h$  belongs to $C_{b}^{3}(\mathbb{R}^{n}, \mathcal{L}(\mathbb{R}^{d_1},\mathbb{R}^{m}))$ such that
$$C_{h}:=\max \{\|h\|_{\infty},\|D h\|_{\infty},\|D^{2}h\|_{\infty},\|D^{3}h\|_{\infty}\}<\infty.$$
	\item[(A3)]
$g: \mathbb{R}^{n} \times \mathbb{R}^{m} \rightarrow \mathbb{R}^{m}$  is globally Lipschitz continuous with the Lipschitz constant  $C_{g}$ and let $\|g(0,0)\|< C$.
\item[(A4)] $f$ is bounded.
\end{enumerate}
\begin{lemma}
	Let {\rm (A1)-(A3)} hold. For any $X_0\in \mathbb{R}^n, Y_0\in \mathbb{R}^m$ and $T>0$, there is a unique solution $(X_t^\eps,Y_t^\eps)$ to {\rm (\ref{slowpathint})-(\ref{fastpathint})}.
\end{lemma}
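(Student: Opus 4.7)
The plan is to recast the coupled system as a single rough differential equation on $\mathbb{R}^{n+m}$ and then apply Lemma \ref{solution}. Set $y_t=(X^\eps_t,Y^\eps_t)^\top$, and define the drift and diffusion
\[
F_\eps(y):=\bigl(f(y^1,y^2),\eps^{-1}g(y^1,y^2)\bigr)^\top,\qquad G(y):=\begin{pmatrix} h(y^1) & O \\ O & \mathrm{id} \end{pmatrix},
\]
and view $(\ref{slowpathint})$–$(\ref{fastpathint})$ as the RDE $dy_t=F_\eps(y_t)\,dt+G(y_t)\,d\mathbf{x}_t$ driven by the joint rough path $\mathbf{x}=(\omega_1,\omega_{2,\eps})$ with second-order process constructed in Section \ref{s2}. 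Since $B$ and $\hat B$ are independent FBMs with Hurst indices $H,\hat H\in(1/3,1/2]$, the joint lift $\boldsymbol\omega\in\mathscr{C}^{0,\gamma}_g([0,T],\mathbb{R}^{d_1+d_2})$ for some $\gamma\in(1/3,H\wedge\hat H)$ is already provided there; the rescaling $\omega_{2,\eps}=\omega_2(\eps^{-1}\cdot)$ does not affect Hölder regularity (only its constant), so $\mathbf{x}\in\mathscr{C}^{0,\gamma}_g$ on $[0,T]$.

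Next, I verify the hypotheses (H1)–(H3) of Section \ref{2.3} for the augmented system. For (H1), the map $F_\eps$ is globally Lipschitz with constant at most $C_f\vee \eps^{-1}C_g$ by (A1) and (A3), and $\|F_\eps(0)\|\le \|f(0,0)\|+\eps^{-1}\|g(0,0)\|<\infty$ by (A3). For (H2), the block-diagonal structure of $G$ gives
\[
\|G\|_\infty\le\|h\|_\infty+1,\qquad \|D^kG\|_\infty\le\|D^kh\|_\infty\quad (k=1,2,3),
\]
so $G\in C_b^3$ with $C_G\le C_h+1$ by (A2). For (H3), the driving path $(\omega_1,\omega_{2,\eps})$ lies in $\mathcal C^\gamma([0,T],\mathbb{R}^{d_1+d_2})$. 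Strictly speaking the Gubinelli derivative of the lower block of $G$ vanishes, so, as pointed out in the remark, the second-order part of $\omega_{2,\eps}$ does not enter the rough integral; nonetheless, the canonical joint lift of Section \ref{s2} supplies all cross iterated integrals needed to form $G(y)^\prime\mathbb X_{s,t}$ in (\ref{2.4}).

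Given (H1)–(H3), Lemma \ref{solution} applies on any compact $[0,T]$ and yields a unique controlled path $(y,y^\prime)\in\mathcal D^{2\alpha}_{\mathrm{x}}([0,T],\mathbb{R}^{n+m})$ with $y^\prime_t=G(y_t)$ solving the augmented RDE. Reading off the two components gives the pair $(X^\eps,Y^\eps)$ satisfying the integral equations (\ref{slowpathint})–(\ref{fastpathint}), together with the supremum and $p$-variation estimates of Lemma \ref{solution} on $[0,T]$ (with $p=1/\gamma$). Uniqueness for $(X^\eps,Y^\eps)$ follows from the uniqueness statement of Lemma \ref{solution} for the coupled system: any other pair of solutions assembles into a controlled path against the same $\mathbf{x}$, hence must coincide.

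The only subtle point — and the main obstacle — is the dependence of the estimates on $\eps$, since the drift constant becomes $\eps^{-1}C_g$ and Lemma \ref{solution} produces a factor $e^{4C_F T}$; this blow-up is not an issue for mere existence/uniqueness on $[0,T]$ but will have to be handled carefully later (via the fast-time rescaling $t\mapsto t\eps$ that turns (\ref{fastpath}) into a standard-scale equation, exploiting stationarity of $\theta$) when deriving $\eps$-uniform bounds for the averaging argument. For the present lemma, however, the $\eps$-dependent constant is acceptable and the conclusion follows directly from Lemma \ref{solution}.
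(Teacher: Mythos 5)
Your proposal is correct and follows essentially the same route as the paper: the paper also rewrites the coupled system as a single RDE on $\mathbb{R}^{n+m}$ with drift $F:=(f,g)$ and block-diagonal diffusion $G$ (equations (\ref{3.5}) and following) and then invokes Lemma \ref{solution} directly, stating only ``this is just the special case'' without spelling out the verification of (H1)--(H3). Your explicit check of the hypotheses for the augmented system and your remark that the $\eps^{-1}$ blow-up in the Lipschitz constant is harmless for existence/uniqueness on a fixed $[0,T]$ are accurate elaborations of what the paper leaves implicit.
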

\begin{proof}
This is just the special case of \cref{solution}, we omit the proof here.
\end{proof}

\begin{lemma}\label{norm}Let {\rm (A1)-(A3)} hold. 
 The supremum and  $p$-{\rm var} norms of the solution are estimated as follows
\begin{align*}
\|X\|_{\infty,[0, T]} \leq &\big(\|X_{0}\|+({\|f\|_{\infty}}{C_{f}^{-1}}+{C_{p}^{-1}}) N_{\frac{1}{4C_p C_h},[0, T],p}(\boldsymbol{\omega_1})\big)e^{4 C_{f}T}, \\
 \interleave X, R^{X}\interleave_{p-{\rm var},[0, T]}
 \leq& \big(\|X_{0}\|+({\|f\|_{\infty}}{C_{f}^{-1}}+{C_{p}^{-1}})  N_{\frac{1}{4C_p C_h},[0, T],p}(\boldsymbol{\omega_1})\big)\\
&\quad \times  e^{4 C_{f}T} N_{\frac{1}{4C_p C_h},[0, T],p}^{\frac{p-1}{p}}(\boldsymbol{\omega_1})-\|X_{0}\|.\end{align*}
\end{lemma}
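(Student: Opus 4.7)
The plan is to identify this as a direct application of \cref{solution} to the slow equation (\ref{slowpathint}), treating the fast component $Y^\eps$ as a given path.

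First, I would freeze $Y^\eps$ and rewrite (\ref{slowpathint}) as a rough differential equation in $X^\eps$ alone, with time-dependent drift $\tilde F_t(x):=f(x,Y^\eps_t)$, diffusion $h(x)$, and driving rough path $\boldsymbol{\omega_1}\in\mathscr{C}_{g}^{0,\beta}([0,T],\mathbb{R}^{d_1})$ for some $\beta\in(1/3,H]$. The hypotheses of \cref{solution} then translate as follows: by (A1), $\tilde F_t$ is globally Lipschitz in $x$ with constant $C_f$ uniformly in $t$, so (H1) holds; by (A2), $h\in C_b^3$ with uniform bound $C_h$, so (H2) holds with $C_G=C_h$; and (H3) is provided by the construction of $\boldsymbol{\omega_1}$ in Section \ref{s2}. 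The boundedness of $f$ (implicit in the statement, cf.\ (A4)) gives $\sup_{t\in[0,T]}\|\tilde F_t(0)\|\le\|f\|_\infty$, which plays the role of $\|F(0)\|$ in \cref{solution}.

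Second, I would invoke \cref{solution} with the substitutions $F\mapsto\tilde F$, $G\mapsto h$, $C_F\mapsto C_f$, $C_G\mapsto C_h$, $\|F(0)\|\mapsto\|f\|_\infty$, $e\mapsto n$, $[T_1,T_2]\mapsto[0,T]$, and rough path $\mathbf{x}\mapsto\boldsymbol{\omega_1}$. Both displayed estimates follow directly, since the partition count $N_{1/(4C_pC_h),[0,T],p}(\boldsymbol{\omega_1})$ is determined by $\boldsymbol{\omega_1}$ alone and carries over unchanged.

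The one subtle point, which I regard as the main (though mild) obstacle, is that \cref{solution} is stated for a time-independent drift, whereas our $\tilde F$ depends on $t$ through $Y^\eps$. I would argue that an inspection of the proof of \cref{solution} in Duc et al.\ shows the only properties of the drift actually used are its Lipschitz constant and the uniform bound $\|F(0)\|$; no Gubinelli derivative of $F$ appears, because the drift enters only through the ordinary Lebesgue integral $\int_0^t\tilde F_r(X^\eps_r)\,dr$ and contributes nothing to the rough integral against $\boldsymbol{\omega_1}$. Hence the contraction and Gronwall-type arguments on each interval $[\tau_i,\tau_{i+1}]$ go through verbatim with the same constants, provided one uses $\|f\|_\infty$ as the uniform bound on $\|\tilde F_t(0)\|$. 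Once this extension is observed, the proof is a straightforward transcription.
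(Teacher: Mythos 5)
Your proposal is correct and follows essentially the same route as the paper: freeze $Y^\eps$ as a given continuous path, bound the drift via $\|f(\xi,\zeta)\|\le C_f\|\xi\|+\|f\|_\infty$ using (A1) and (A4), and apply \cref{solution} (i.e.\ Theorem~3.8 of Duc et al.) with $\|F(0)\|$ replaced by $\|f\|_\infty$. Your explicit discussion of why the time-dependence of the frozen drift is harmless is a point the paper's proof passes over silently, but it does not change the argument.
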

\begin{proof}
	Let $(Y_t)_{t\in[0,T]}$ be a continuous path, we consider
	\begin{align}\label{slowpathintonly}
		X_t=& X_0+\int_{0}^{t}f( X_r,Y_r)\,dr+\int_{0}^{t}h(X_r)\,d\boldsymbol{\omega_1}(r).\end{align}
	From (A1) and (A4), one has
	$$\|f(\xi,\zeta)\|=\|f(\xi,\zeta)-f(0,\zeta)+f(0,\zeta)\|\leq C_f\|\xi\|+\sup_{\zeta\in \mathbb{R}^m}\|f(0,\zeta)\|$$
	where $\sup_{\zeta\in \mathbb{R}^m}\|f(0,\zeta)\| \leq \|f\|_{\infty}$. Then, apply \cite[Theorem 3.8]{duc2020controlled}, (\ref{slowpathintonly}) has the following norm estimate. The norm estimations can be obtained using similar techniques, i.e. replacing $\|f(0)\|$  in \cite[Theorem 3.8]{duc2020controlled} by $\|f\|_{\infty}$.
\end{proof}

\begin{lemma}\label{xbound}
Let {\rm (A1)-(A4)}  hold. Then, for all $T>0$, we have
$$\| X^\eps\|_{\infty}+ \interleave X^\eps, R^{X^\eps}\interleave_{p-{\rm var},[0, T]} \le C_1$$
	where $C_{1}>0$ depends on $C_p, C_f, C_h, \|f\|_{\infty},p,\|X_0\|,\interleave \boldsymbol{\omega_1}\interleave_{p-{\rm var},[0, T]} $ but independent of $\eps$.	
\end{lemma}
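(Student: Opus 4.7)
The plan is to invoke \cref{norm} directly and observe that the resulting bound does not involve the fast driver $\boldsymbol{\omega_{2,\eps}}$ at all, so uniformity in $\eps$ comes essentially for free once (A4) is used.

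First, for each fixed $\eps\in(0,1]$ the fast component $Y^\eps$ produced by \cref{solution} is a continuous $\mathbb{R}^m$-valued path on $[0,T]$. Plugging this path in as the frozen input $Y$, the slow equation \cref{slowpathintonly} is exactly the setup of \cref{norm}. The key point is that in the derivation of \cref{norm} the contribution of the drift to the a priori bound only depends on $\|f\|_{\infty}$, because under (A1) and (A4) one writes $\|f(\xi,\zeta)\|\le C_f\|\xi\|+\|f\|_{\infty}$, and the $\|\zeta\|$-terms never appear. Hence the conclusion of \cref{norm} applied to $(X^\eps, Y^\eps)$ reads
\begin{align*}
\|X^\eps\|_{\infty,[0,T]}+\interleave X^\eps,R^{X^\eps}\interleave_{p\text{-var},[0,T]}
\le C\bigl(\|X_0\|,C_f,C_h,C_p,\|f\|_{\infty},T\bigr)\,
\bigl(1+N_{\nu,[0,T],p}(\boldsymbol{\omega_1})\bigr)^{2}
\end{align*}
with $\nu=(4C_pC_h)^{-1}$. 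Crucially, the right-hand side depends only on $\boldsymbol{\omega_1}$, which is the $\eps$-independent slow rough path.

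Second, I would control the stopping-time counter by the rough estimate recalled in \cref{2.3},
\begin{align*}
N_{\nu,[0,T],p}(\boldsymbol{\omega_1})\le 1+\nu^{-p}\interleave\boldsymbol{\omega_1}\interleave_{p\text{-var},[0,T]}^{p},
\end{align*}
which expresses the bound in terms of $\interleave\boldsymbol{\omega_1}\interleave_{p\text{-var},[0,T]}$, and absorb constants into $C_1$.

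Combining these two steps gives the claimed $\eps$-free bound. I do not expect any real obstacle here: the only point worth underlining is that (A4), the boundedness of $f$, is exactly what decouples the a priori estimate on $X^\eps$ from the fast dynamics $Y^\eps$ and from the scaled driver $\boldsymbol{\omega_{2,\eps}}$; without this assumption one would have a Gronwall term involving $\|Y^\eps\|_{\infty}$ which is not uniform in $\eps$.
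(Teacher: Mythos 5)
Your proposal is correct and follows essentially the same route as the paper, which simply invokes \cref{norm} (whose bound depends on $\boldsymbol{\omega_1}$ only through $N_{\frac{1}{4C_pC_h},[0,T],p}(\boldsymbol{\omega_1})$ and never on $Y^\eps$ or $\boldsymbol{\omega_{2,\eps}}$, thanks to (A4)) and absorbs everything into an $\eps$-independent constant. Your added step of bounding $N_{\nu,[0,T],p}(\boldsymbol{\omega_1})\le 1+\nu^{-p}\interleave\boldsymbol{\omega_1}\interleave_{p\text{-var},[0,T]}^{p}$ is exactly how the stated dependence on $\interleave\boldsymbol{\omega_1}\interleave_{p\text{-var},[0,T]}$ is obtained.
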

\begin{proof}
Apply Lemma \ref{norm}, it is easy to show the desired estimate.
\end{proof}
\begin{remark}
Since the boundedness of the function $f$, the parameter $1/\eps$ in the fast component will not affect the estimates of $\| X^\eps\|_{\infty}$ and $\interleave X^\eps, R^{X^\eps}\interleave_{p-{\rm var},[0, T]}$, thus it is natural that $ C_1$ is independent of $\eps$.
\end{remark}
\begin{lemma}\label{xdelta}
Let {\rm (A1)-(A4)} hold. Then, given $\frac{1}{p}\in(\frac13,\gamma), \gamma<H_{\min}, $ for any $0\le s<t \le T$, we have the following estimate
	\[\|X^\eps_t-X^\eps_s\|\le  C_2 (t-s)^{\gamma}\]
where $C_2>0$ depends on $C_1, C_p,  C_f, C_h,\|f\|_{\infty}, \interleave \boldsymbol{\omega_1}\interleave_{\gamma,[0,T]}$ but independent of $\eps$.
\end{lemma}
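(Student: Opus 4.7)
The plan is to split the increment into drift and rough-integral contributions and to exploit the sewing estimate together with H\"older-to-variation comparisons to extract the factor $(t-s)^\gamma$. Starting from (\ref{slowpathint}),
$$X^\eps_t - X^\eps_s = \int_s^t f(X^\eps_r, Y^\eps_r)\,dr + \int_s^t h(X^\eps_r)\,d\boldsymbol{\omega_1}(r).$$
The drift term is dispatched immediately by (A4): $\|\int_s^t f(X^\eps_r, Y^\eps_r)\,dr\|\le \|f\|_\infty(t-s)\le \|f\|_\infty T^{1-\gamma}(t-s)^\gamma$.

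For the rough integral, I would first note that $(X^\eps, h(X^\eps))\in\mathcal{D}^{2\gamma}_{\omega_1}$ and hence, by the standard composition/chain rule for controlled rough paths applied to the $C_b^3$ map $h$, one has $(h(X^\eps), Dh(X^\eps)h(X^\eps))\in\mathcal{D}^{2\gamma}_{\omega_1}$. Apply the $p$-variation sewing bound (\ref{inq-1}) with this controlled rough path to obtain
$$\bigg\|\int_s^t h(X^\eps_r)\,d\boldsymbol{\omega_1}(r) - h(X^\eps_s)\,\omega_{1,s,t} - Dh(X^\eps_s)h(X^\eps_s)\,\bbomega_{1,s,t}\bigg\|\le C_p(I_1+I_2),$$
where $I_1=\interleave\omega_1\interleave_{p-{\rm var},[s,t]}\interleave R^{h(X^\eps)}\interleave_{q-{\rm var},[s,t]^2}$ and $I_2=\interleave Dh(X^\eps)h(X^\eps)\interleave_{p-{\rm var},[s,t]}\interleave\bbomega_1\interleave_{q-{\rm var},[s,t]^2}$. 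The two leading Taylor-type terms are bounded directly via (A2) by $C_h\interleave\omega_1\interleave_{\gamma,[0,T]}(t-s)^\gamma$ and $C_h^2\interleave\bbomega_1\interleave_{2\gamma,[0,T]^2}(t-s)^{2\gamma}$.

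To handle $I_1$ and $I_2$, I use the elementary H\"older-to-variation comparison (with $p=1/\gamma$, $q=1/(2\gamma)$),
\begin{align*}
\interleave\omega_1\interleave_{p-{\rm var},[s,t]}&\le\interleave\omega_1\interleave_{\gamma,[0,T]}(t-s)^\gamma,\\
\interleave\bbomega_1\interleave_{q-{\rm var},[s,t]^2}&\le\interleave\bbomega_1\interleave_{2\gamma,[0,T]^2}(t-s)^{2\gamma},
\end{align*}
together with the fact that both $\interleave R^{h(X^\eps)}\interleave_{q-{\rm var},[s,t]^2}$ and $\interleave Dh(X^\eps)h(X^\eps)\interleave_{p-{\rm var},[s,t]}$ are dominated by their global $[0,T]$ versions, which in turn are controlled through the chain rule by $C_h$ and $\interleave X^\eps, R^{X^\eps}\interleave_{p-{\rm var},[0,T]}$. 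By Lemma \ref{xbound}, the latter is $\eps$-uniformly bounded. Thus $I_1=O((t-s)^\gamma)$ and $I_2=O((t-s)^{2\gamma})$; assembling the drift bound and all the rough-integral contributions produces $\|X^\eps_t-X^\eps_s\|\le C_2(t-s)^\gamma$.

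The main obstacle is confirming the $\eps$-uniformity of $C_2$. The slow equation carries no $1/\eps$ scaling, and by (A4) the fast state $Y^\eps$ enters only through the bounded coefficient $f$, so the only $\eps$-dependent ingredient is $X^\eps$ itself. Its controlled-rough-path norm is $\eps$-uniformly bounded by Lemma \ref{xbound}, and this uniformity propagates through the chain rule to $h(X^\eps)$ and $Dh(X^\eps)h(X^\eps)$, which is precisely what is needed to conclude that $C_2$ depends only on the quantities listed in the statement.
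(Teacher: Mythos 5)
Your proposal is correct and follows essentially the same route as the paper: split off the drift via (A4), apply the sewing estimate (\ref{inq-1}) to the controlled path $(h(X^\eps),Dh(X^\eps)h(X^\eps))$, bound the remainder and Gubinelli-derivative variation norms through the chain rule and the $\eps$-uniform bound of Lemma \ref{xbound}, and convert $p$-variation of $\boldsymbol{\omega_1}$ on $[s,t]$ into $(t-s)^\gamma$ via the H\"older norm. The only cosmetic difference is that the paper carries out the local-interval estimates of $\interleave R^{h(X^\eps)}\interleave_{q-{\rm var},[s,t]^2}$ and $\interleave Dh(X^\eps)h(X^\eps)\interleave_{p-{\rm var},[s,t]}$ explicitly (via Lemma \ref{control}) before invoking the global bound, whereas you pass directly to the $[0,T]$ norms.
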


\begin{proof} Consider the $\gamma$-rough path $\boldsymbol{\omega_1}=(\omega_1,\bbomega_1)  \in \mathscr{C}_{g}^{0,\gamma}([0, T], \mathbb{R}^{d_1})$ and
denote \begin{align*}
X^\eps_{s,t}:=X^\eps_t-X^\eps_s, \quad
\omega_{1,s,t}:=\omega_{1}(t)-\omega_{1}(s)
\end{align*}
then,
we begin with (\ref{slowpathint}) and (\ref{inq-1}) such that
\begin{align*}
 \|X^\eps_{s, t}\|
\leq  & \bigg\|\int_{s}^{t} f(X^\eps_{u},Y^\eps_{u}) d u\bigg\|+\bigg\|\int_{s}^{t}h(X^\eps_{u})d \boldsymbol{\omega_1}(u)\bigg\| \\
\leq & \|f\|_{\infty} (t-s)+\|h(X^\eps_{s})\|\|\omega_{1,s, t}\|+\|D h(X^\eps_{s}) h(X^\eps_{s})\|\|\bbomega_{1,s, t}\| \\
& +C_{p}\big(\interleave \omega_{1}\interleave_{p-{\rm var},[s, t]}\interleave R^{h(X^\eps)} \interleave_{q-{\rm var},[s, t]^{2}}\\
&\quad +\interleave D h(X^\eps_{s}) h(X^\eps_{s}) \interleave_{p-{\rm var},[s, t]}\interleave \bbomega_{1}\interleave_{q-{\rm var},[s, t]^{2}}\big).
\end{align*}

Since
\begin{align}\label{rh}
R_{s, t}^{h(X^\eps)} =&h(X^\eps)_{s,t}-D h(X^\eps_s) h(X^\eps_s) \omega_{1,s,t}
\end{align}
and using (A2), we have
\begin{align*}
\|R_{s, t}^{h(X^\eps)}\| \leq &\bigg\|\int_{0}^{1} D h(X^\eps_s+rX^\eps_{s, t}) R_{s, t}^{X^\eps} d r\bigg\|\\
&+\bigg\|\int_{0}^{1}(D h(X^\eps_s+rX^\eps_{s, t})-D h(X^\eps_s))h(X^\eps_s) \omega_{1,s,t} d r\bigg\|\\
 \leq & C_{h}\| R^{X^\eps}_{s,t}\|+\frac{1}{2} C_{h}^{2}\| X^\eps_{s,t}\|\|\omega_{1,s,t}\|\\
 &\leq C_{h}\interleave R^{X^\eps}\interleave_{q-{\rm var},[s, t]^{2}}+\frac{1}{2} C_{h}^{2}\interleave X^\eps \interleave_{p-{\rm var},[s, t]}\interleave \omega_{1}\interleave_{p-{\rm var},[s, t]}.
\end{align*}

Then, by Lemma \ref{control}, one has
\begin{align*}
\interleave R^{h(X^\eps)}\interleave_{q-{\rm var},[s, t]^{2}} \leq &  C_{h}L_{2,q}^{1/q}\interleave R^{X^\eps}\interleave_{q-{\rm var},[s, t]^{2}}\\
&+\frac{1}{2} C_{h}^{2} L_{2,q}^{1/q} \interleave X^\eps \interleave_{p-{\rm var},[s, t]}\interleave \omega_{1}\interleave_{p-{\rm var},[s, t]}
\end{align*}
where $L_{2,q}^{1/q} \geq 1$ is defined in Lemma \ref{control} for $n=2$.

Next, one has $$
\interleave D h(X^\eps) h(X^\eps)\interleave_{p-{\rm var},[s, t]} \leq  2 C_{h}^{2}\interleave X^\eps \interleave_{p-{\rm var},[s, t]}.$$

Thus,
we have
\begin{align*}
 \|X^\eps_{s, t}\|&
\leq  \|f\|_{\infty} (t-s)+C_{h}\interleave \omega_{1}\interleave_{p-{\rm var},[s, t]}+C_{h}^{2}\interleave \bbomega_{1}\interleave_{q-\operatorname{var},[s, t]^{2}}\\
&+C_p \big(\interleave \bbomega_{1}\interleave_{q-{\rm var},[s, t]^{2}} 2C_{h}^{2}\interleave X^\eps\interleave_{p-{\rm var},[s, t]}+ \interleave \omega_{1}\interleave_{p-{\rm var},[s, t]}\\\
&\quad  \times \big( C_{h}L_{2,q}^{1/q} \interleave R^{X^\eps}\interleave_{q-{\rm var},[s, t]^{2}}+\frac{1}{2} C_{h}^{2}L_{2,q}^{1/q}\interleave \omega_{1}\interleave_{p-{\rm var},[s, t]}\interleave X^\eps \interleave_{p-{\rm var},[s, t]}\big) \big)\\
\leq & \|f\|_{\infty} (t-s)\\
&+4 C_p L_{2,q}^{1/q} (C_{h}^{2}\|\boldsymbol{\omega_1}\|_{p-v a r,[s, t]}^{2} \vee C_{h}\|\boldsymbol{\omega_1}\|_{p-v a r,[s, t]})(1+\interleave X^\eps, R^{X^\eps}\interleave_{p-{\rm var},[s, t]}).
\end{align*}

Thus,  given $\frac{1}{p}\in(\frac13,\gamma), $ by  Lemma \ref{solution} and the fact that $$\interleave \boldsymbol{\omega_1} \interleave_{p-{\rm var},[s, t]}\leq \interleave \boldsymbol{\omega_1} \interleave_{\gamma,[s, t]}(t-s)^\gamma$$ one has
$$\|X^\eps_{s, t}\| \le C_2  (t-s)^{\gamma}$$
where $C_2>0$ depends on $L_{2,q}^{1/q}, C_1, C_p,  C_f, C_h,\|f\|_{\infty}, \interleave \boldsymbol{\omega_1}\interleave_{\gamma,[0,T]}$ but independent of $\eps$.
\end{proof}

\subsection{A stationary Ornstein-Uhlenbeck-process for the FBM}\label{s3-2}
We consider the equation
\begin{equation}\label{eq1}
dZ=-A Z dt+d\omega_2
\end{equation}
or equivalent
\begin{equation}\label{eq1a}
Z(t)=Z(r)+\int_r^t A Z(q)dq+\omega_2(t)-\omega_2(r),\quad r\not=t.
\end{equation}
on the metric dynamical system $(\Omega_2,\fF_2,\PP_{\hat H},\theta)$. Let us define
\[
Z(\omega_2)=Z^1(\omega_2)=\int_{-\infty}^0e^{Ar} d\omega_2(r)
\]
which is defined to be the limit of Riemann integrals (and hence rough integrals), see Cheridito et al. \cite[Proposition A1]{cheridito2003fractional} ) and equal to
\[
-A\int_{-\infty}^0 e^{Ar }\omega_2(r)dr.
\]
For $\omega_2\in \Omega_2$ this integral is well defined.
It is easy to check that
\[
r\mapsto Z(\theta_r\omega_2)
\]
is a stationary solution to (\ref{eq1}).\\
 Consider
\[
dZ^\eps=-\frac{1}{\eps}A Z^\eps dt+d\omega_{2,\eps}
\]
with stationary solution
\[
r\mapsto -\frac{A}{\eps}\int_{-\infty}^0 e^{\frac{A}{\eps}r }\omega_{2,\eps}(r)dr.
\]
Note that $\omega_2\in\Omega_2$ if and only if $\omega_{2,\eps}\in\Omega_2$.
\begin{lemma}\label{l2}
We have for $\omega_2\in\Omega_2,\,\eps>0,\,t\in\RR$
\[
Z^\eps(\theta_t\omega_2)=Z(\theta_{\frac{t}{\eps}}\omega_2)=Z^1(\theta_{\frac{t}{\eps}}\omega_2).
\]
\end{lemma}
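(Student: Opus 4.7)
The second identity $Z(\theta_{t/\eps}\omega_2)=Z^{1}(\theta_{t/\eps}\omega_2)$ is immediate from the definition $Z:=Z^{1}$. The content of the claim is therefore the first identity, which expresses that the stationary Ornstein-Uhlenbeck process associated with the rescaled FBM $\omega_{2,\eps}$ at time $t$ coincides with the stationary Ornstein-Uhlenbeck process associated with $\omega_2$ at the rescaled time $t/\eps$. My plan is to prove it by a direct change of variables in the defining integral.

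First I would interpret $Z^\eps(\theta_t\omega_2)$ as the value at time $t$ of the stationary solution of $dZ^\eps=-(1/\eps)AZ^\eps\,dt+d\omega_{2,\eps}$. Shifting the formula for $Z^{\eps}$ in the natural way for this equation gives
\[
Z^\eps(\theta_t\omega_2)=-\frac{A}{\eps}\int_{-\infty}^{0}e^{(A/\eps)r}\bigl[\omega_{2,\eps}(r+t)-\omega_{2,\eps}(t)\bigr]\,dr.
\]
Using $\omega_{2,\eps}(s)=\omega_2(s/\eps)$ and substituting $u=r/\eps$, so that $dr=\eps\,du$, the prefactor $A/\eps$ combines with $\eps$ to produce $A$ while $e^{(A/\eps)r}$ becomes $e^{Au}$, and the bracket becomes $\omega_2(u+t/\eps)-\omega_2(t/\eps)=(\theta_{t/\eps}\omega_2)(u)$. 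Hence
\[
Z^\eps(\theta_t\omega_2)=-A\int_{-\infty}^{0}e^{Au}(\theta_{t/\eps}\omega_2)(u)\,du=Z(\theta_{t/\eps}\omega_2),
\]
which is exactly the desired identity.

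As a sanity check one can also argue by uniqueness of the stationary solution. Starting from the integral equation (\ref{eq1a}) for $Z$, replacing $t$ by $t/\eps$ and $r$ by $r/\eps$, and substituting $q=s/\eps$ in the drift integral while identifying $\omega_{2,\eps}(t)=\omega_2(t/\eps)$, one obtains
\[
Z(\theta_{t/\eps}\omega_2)=Z(\theta_{r/\eps}\omega_2)-\frac{A}{\eps}\int_{r}^{t}Z(\theta_{s/\eps}\omega_2)\,ds+\omega_{2,\eps}(t)-\omega_{2,\eps}(r),
\]
so $t\mapsto Z(\theta_{t/\eps}\omega_2)$ is a solution of the $\eps$-equation. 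Since it has at most sub-linear growth (inherited from $\omega_2\in\Omega_2$ via the exponentially decaying OU kernel), it must agree with the unique stationary solution $t\mapsto Z^\eps(\theta_t\omega_2)$.

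I do not anticipate any substantive obstacle: the claim is a time-scaling identity for a linear equation. The only care required is that the change of variables is legitimate in the pathwise (rough or Riemann) interpretation of the integrals against $\omega_{2,\eps}$, which follows from the H\"older continuity and sub-linear growth on $\Omega_2$ guaranteed by \cite{cheridito2003fractional}, ensuring convergence of the improper integrals.
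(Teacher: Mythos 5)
Your proof is correct, and it is essentially the intended argument: the paper itself only refers to Pei et al.\ \cite{pei2023almost} for this lemma, but the substitution $u=r/\eps$ you perform is exactly the ``simple integral substitution $r'=r/\eps$'' the authors invoke alongside Lemma~\ref{l2} in Remark~\ref{3.11}. The one point worth stating explicitly is that the identity requires reading $Z^\eps(\theta_t\omega_2)$ as the stationary solution of the $\eps$-equation evaluated at time $t$, i.e.\ with the shift acting on the rescaled driver via $\omega_{2,\eps}(\cdot+t)-\omega_{2,\eps}(t)$ (so that $t\mapsto Z^\eps(\theta_t\omega_2)$ solves $dZ^\eps=-\tfrac1\eps AZ^\eps\,dt+d\omega_{2,\eps}$), which is the reading your first display adopts and the only one under which the claimed equality with $Z(\theta_{t/\eps}\omega_2)$ holds.
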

For the proof we refer to Pei et al. \cite{pei2023almost}.

\begin{lemma}\label{l1}
(1) $t\mapsto Z(\theta_t\omega_2)$ is $\eta$-H\"older continuous,
$\eta<\hat H$, on any interval $[T_1,T_2],\,T_1<T_2$. This set can be chosen to be $(\theta_t)_{t\in\RR}$-invariant.\\
(2) $\EE\sup_{t\in [0,1]}\|Z(\theta_t\omega_2)\|<\infty$.\\
(3) Let $T>0$.  We have for $\eps\to 0$
a $(\theta_t)_{t\in\RR}$-invariant set of full measure.
              \begin{equation*}
              \sup_{s\in[0,T]} \|Z^\eps(\theta_s\omega_2)\|=o(\eps^{-1}).
              \end{equation*}
\end{lemma}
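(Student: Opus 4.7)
The plan is to treat the three parts in order, using the integral form of (\ref{eq1}) for (1), Fernique's theorem for (2), and a Borel–Cantelli argument based on stationarity and Gaussian tails of suprema for (3). Throughout I use that $A$ has eigenvalues with positive real parts, so $\|e^{Ar}\|\le Ce^{-c|r|}$ for $r\le 0$, and that $\omega_2\in\Omega_2$ is $\eta$-H\"older on every compact interval and of sublinear growth.

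For (1) I would apply (\ref{eq1a}) to the stationary orbit $q\mapsto Z(\theta_q\omega_2)$ to get
\[
Z(\theta_t\omega_2)-Z(\theta_s\omega_2)=-A\int_s^t Z(\theta_q\omega_2)\,dq+(\omega_2(t)-\omega_2(s)),\qquad T_1\le s<t\le T_2.
\]
The map $q\mapsto Z(\theta_q\omega_2)=-A\int_{-\infty}^0 e^{Ar}(\omega_2(r+q)-\omega_2(q))\,dr$ is continuous in $q$ by dominated convergence: the exponential decay of $\|e^{Ar}\|$ as $r\to-\infty$ combined with the $\eta$-H\"older bound near $0$ and the sublinear growth as $r\to-\infty$ of $\omega_2\in\Omega_2$ provides an integrable dominating function uniformly on $[T_1,T_2]$. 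Hence $\sup_{q\in[T_1,T_2]}\|Z(\theta_q\omega_2)\|<\infty$, the first summand is Lipschitz in $(s,t)$ and the second is $\eta$-H\"older, so the sum is $\eta$-H\"older. Intersecting the defining set of $\Omega_2$ over rational shifts yields a $(\theta_t)_{t\in\RR}$-invariant set of full measure on which this bound holds simultaneously for all compact intervals.

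For (2) the process $t\mapsto Z(\theta_t\omega_2)$ is a.s. continuous (by (1)) and centered Gaussian with stationary law. Viewing it as a Gaussian random element of the separable Banach space $\mathcal{C}([0,1];\RR^{d_2})$, Fernique's theorem gives $\EE\exp(\lambda\sup_{t\in[0,1]}\|Z(\theta_t\omega_2)\|^2)<\infty$ for some $\lambda>0$, which is strictly stronger than the claimed first-moment bound.

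For (3), by Lemma \ref{l2} we have $\sup_{s\in[0,T]}\|Z^\eps(\theta_s\omega_2)\|=\sup_{u\in[0,T/\eps]}\|Z(\theta_u\omega_2)\|$, so it suffices to show $N^{-1}\sup_{u\in[0,N]}\|Z(\theta_u\omega_2)\|\to 0$ a.s.\ as $N\to\infty$. Set $M_k:=\sup_{t\in[k,k+1]}\|Z(\theta_t\omega_2)\|$; since $\theta$ preserves $\PP_{\hat H}$, the $M_k$ are identically distributed, and by the Borell--Tsirelson--Ibragimov--Sudakov inequality applied to the stationary Gaussian process on $[0,1]$ they satisfy a common Gaussian tail $\PP(M_k>u)\le C\exp(-cu^2)$ for $u$ large. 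For any $\delta>0$, $\sum_{k\ge 1}\PP(M_k>\delta k)<\infty$, so Borel--Cantelli gives $\limsup_{k\to\infty}M_k/k\le\delta$ a.s.; letting $\delta\downarrow 0$ along a countable sequence yields $M_k=o(k)$ a.s. Since $\sup_{u\in[0,N]}\|Z(\theta_u\omega_2)\|\le\max_{0\le k\le \lceil N\rceil}M_k$, the usual splitting argument gives $\sup_{u\in[0,N]}\|Z(\theta_u\omega_2)\|=o(N)$, which is exactly $o(\eps^{-1})$ after substitution. $(\theta_t)$-invariance of the exceptional null set follows by intersecting over rational shifts, as in (1). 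The main obstacle is the uniform-in-$k$ Gaussian tail bound for $M_k$: pointwise Fernique on each window supplies integrability but not summability, while Borell--TIS combined with stationarity yields the dimension-free sub-Gaussian rate needed for the Borel--Cantelli step.
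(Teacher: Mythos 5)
Your proof is correct, but it is considerably more self-contained than what the paper actually does, and for parts (2)--(3) it follows a genuinely different route. The paper's displayed proof only treats part (1), and does so exactly as you do: read off the increment from (\ref{eq1a}) and use the $\eta$-H\"older continuity of $\omega_2$ plus local boundedness of $q\mapsto Z(\theta_q\omega_2)$; your dominated-convergence justification of that boundedness is a welcome elaboration. For (2) the paper gives no argument (it leans on the cited references), whereas your Fernique argument on $\mathcal{C}([0,1];\RR^{d_2})$ is a clean and correct way to get it, indeed with exponential integrability to spare. For (3) the paper's intended mechanism is deterministic rather than probabilistic: the set $\Omega_2$ was \emph{defined} so that $\|\omega_2(t)\|/t\to 0$, and plugging the representation $Z(\theta_u\omega_2)=-A\int_{-\infty}^0 e^{Ar}(\omega_2(r+u)-\omega_2(u))\,dr$ into this sublinear-growth bound gives $\|Z(\theta_u\omega_2)\|=o(u)$ pathwise on the already $(\theta_t)$-invariant set $\Omega_2$, with no new null set introduced; combined with Lemma \ref{l2} this is precisely $o(\eps^{-1})$. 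Your Borel--Cantelli argument reaches the same conclusion and is valid, but it buys the result at the cost of a new exceptional null set whose $\theta$-invariance you then have to restore by intersecting over shifts.

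One remark on your closing comment: the Borell--TIS inequality is not actually needed, and your claim that ``pointwise Fernique supplies integrability but not summability'' is not right. Since the $M_k$ are identically distributed by stationarity, $\sum_{k\ge 1}\PP(M_k>\delta k)=\sum_{k\ge 1}\PP(M_0>\delta k)\le \delta^{-1}\EE M_0<\infty$ follows from the first-moment bound of part (2) alone; this is the standard ``integrability implies temperedness'' lemma from random dynamical systems, and it is the cheaper way to close your own argument.
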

\begin{proof}
(1) The continuity of $Z(\theta_t\omega_2)$ and the $\eta$-H\"older-continuity follows by the $\eta$-H\"older continuity of $\omega_2$, see (\ref{eq1a}). In particular we obtain the $\eta$-H\"older-continuity of $Z(\theta_t\omega_2)$ on any interval $[-K,K],\,K \in\NN$ what allows us to conclude that the existence of a $(\theta_t)_{t\in\RR}$ invariant set full measure of elements $\omega_2$ on which we have the desired H\"older continuity.
\end{proof}

\subsubsection{A short introduction of random dynamical systems}
Let $(\Omega,\fF,\PP)$ be a probability space. In addition, let $B$ be a separable Banach space. On $\Omega$ a measurable
flow $\theta$   so that
\[
\theta_{t+s}=\theta_t\theta_s=\theta_t\theta_s,\quad s, t\in\RR,\quad \theta_0={\rm id}_\Omega.
\]
and preserving the measure $\PP$: $\theta_t \PP=\PP$ for all
$t\in\RR$ is defined. Then $(\Omega,\fF,\PP,\theta)$  is called a metric
dynamical system. For our application we need that this metric dynamical system is
ergodic. A random variable $X\ge 0$ is called tempered if
\[
\lim_{t\to\pm\infty}\frac{\log^+ X(\theta_t\omega)}{|t|}=0.
\]
A family of sets
$(C(\omega))_{\omega\in\Omega},\,C(\omega)\not=\emptyset$ and closed is
called tempered
random set if
${\rm distance}_B(y,C(\omega))$ for all $y\in B$ is
measurable, it is called tempered if
\[
X(\omega)=\sup_{x\in C(\omega)}\|x\|_B
\]
is tempered. We note that for every random set there exists
a sequence
of random variables $(x_n)_{n\in\mathbb{N}}$ so that
\[
C(\omega)=\overline{\bigcup_{n\in\mathbb{N}}\{x_n(\omega)\}}.
\]
\\
 A measurable mapping
\[
\phi:\RR^+\times \Omega\times B\to B
\]
is called a  random dynamical system (RDS) if the cocycle property holds:
\begin{align*}
   \phi(t,\theta_\tau\omega,\cdot)\circ \phi(\tau,\omega,\cdot)= &
\phi(t+\tau,\omega,\cdot),\quad t,\tau\ge 0,\omega\in \Omega, \\
   \phi(0,\omega,\cdot)=&{\rm id}_{B} ,\quad\omega\in\Omega.
\end{align*}
A random variable $Y:\Omega\to B$ is called random fixed point of the
RDS $\phi$ if
\[
\phi(t,\omega,Y(\omega))=Y(\theta_t\omega), \quad t\ge
0,\,\omega\in\Omega.
\]
Now we present sufficient conditions for the existence of a random fixed
point.
  \begin{lemma}\label{l6}
  Suppose that the RDS $\phi$ has a random forward
invariant set
closed $C$ which is tempered:
\[
\phi(t,\omega,C(\omega))\subset C(\theta_t\omega)\quad
\text{for}\quad t\ge 0,\,\omega\in\Omega.
\]
Let
\[
k(\omega)=\sup_{x\not=y\in
C(\omega)}\log\bigg(\frac{\|\phi(1,\omega,x)-\phi(1,\omega,y)\|}{\|x-y\|}\bigg)
\]
so that $\mathbb{E} k<0$. The random variable
\begin{eqnarray}\label{phi}
\omega\mapsto\sup_{t\in
[0,1]}\|\phi(t,\theta_{-t}\omega,y(\theta_{-t}\omega))\|
\end{eqnarray}
is assumed to be tempered for any measurable selector $y$ from
$C$. Then the
RDS $\phi$ has a random fixed point $Y(\omega)\in
C(\omega)$ which is unique. In addition $\|Y(\omega)\|$ is tempered.
This random fixed point is pullback and forward
attracting:
\begin{equation}\label{eq6}
\lim_{t\to\infty}\|\phi(t,\theta_{-t}\omega,y(\theta_{-t}\omega))-Y(\omega)\|=0,
\,\lim_{t\to\infty}\|\phi(t,\omega,y(\omega))-Y(\theta_t\omega)\|=0
\end{equation}
with exponential speed for every measurable selector
$y(\omega)\in C(\omega),\,\omega\in\Omega$. For this lemma we refer to Schmalfuss {\rm \cite{schmalfuss1998random}} or Caraballo et al. {\rm \cite{caraballo2004exponentially}}.
\end{lemma}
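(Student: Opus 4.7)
The plan is to build $Y(\omega)$ as the pullback limit of the cocycle along a measurable selector from $C$, and then verify the fixed-point, uniqueness, and attraction properties by a random contraction-mapping argument driven by the ergodic theorem. First I would use ergodicity of $(\theta_t)_{t\in\RR}$ and the assumption $\EE k<0$: by the Birkhoff ergodic theorem applied to $k\circ \theta_{-\cdot}$ there is a $(\theta_t)$-invariant set of full measure on which
\[
\frac{1}{n}\sum_{i=1}^{n}k(\theta_{-i}\omega)\longrightarrow \EE k<0,
\]
so some $\lambda>0$ and $N(\omega)\in\NN$ satisfy $\sum_{i=1}^{n}k(\theta_{-i}\omega)\le -\lambda n$ for $n\ge N(\omega)$. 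By the cocycle property, $\phi(n,\theta_{-n}\omega,\cdot)$ equals the composition $\phi(1,\theta_{-1}\omega,\cdot)\circ\cdots\circ\phi(1,\theta_{-n}\omega,\cdot)$, and forward invariance of $C$ guarantees that at each stage the iterates stay inside the set on which the log-Lipschitz bound $k$ applies. Thus, for $a,b\in C(\theta_{-n}\omega)$,
\[
\|\phi(n,\theta_{-n}\omega,a)-\phi(n,\theta_{-n}\omega,b)\|\le \exp\Big(\sum_{i=1}^{n}k(\theta_{-i}\omega)\Big)\|a-b\|,
\]
which decays like $e^{-\lambda n}$.

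Next I would fix a measurable selector $y(\omega)\in C(\omega)$ (which exists by measurable selection since $C$ is a closed random set) and set $y_n(\omega):=\phi(n,\theta_{-n}\omega,y(\theta_{-n}\omega))$. For $m>n\ge N(\omega)$, writing $y_m(\omega)=\phi(n,\theta_{-n}\omega,\phi(m-n,\theta_{-m}\omega,y(\theta_{-m}\omega)))$ and using that the inner expression and $y(\theta_{-n}\omega)$ both lie in $C(\theta_{-n}\omega)$, the contraction bound gives
\[
\|y_m(\omega)-y_n(\omega)\|\le e^{-\lambda n}\Big(\|\phi(m-n,\theta_{-(m-n)}\theta_{-n}\omega,y(\theta_{-m}\omega))\|+\|y(\theta_{-n}\omega)\|\Big).
\]
Temperedness of the random variable in (\ref{phi}) (together with temperedness of $\|y\|$ on $C$) ensures both norms grow at most sub-exponentially along the orbit, so $e^{-\lambda n}$ dominates and $(y_n(\omega))_n$ is Cauchy. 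I define $Y(\omega):=\lim_{n\to\infty}y_n(\omega)$; since $C(\omega)$ is closed and each $y_n(\omega)\in C(\omega)$, we get $Y(\omega)\in C(\omega)$, and measurability of $Y$ follows from measurability of each $y_n$.

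Then I would verify the fixed-point identity via the cocycle property: for $t\ge 0$,
\[
\phi(t,\omega,Y(\omega))=\lim_{n\to\infty}\phi(t,\omega,\phi(n,\theta_{-n}\omega,y(\theta_{-n}\omega)))=\lim_{n\to\infty}\phi(t+n,\theta_{-n}\omega,y(\theta_{-n}\omega)),
\]
and re-indexing the pullback shows this equals $Y(\theta_t\omega)$. Uniqueness follows by applying the same contraction estimate to two fixed points $Y,Y'\in C$:
\[
\|Y(\omega)-Y'(\omega)\|\le e^{-\lambda n}\|Y(\theta_{-n}\omega)-Y'(\theta_{-n}\omega)\|,
\]
which tends to zero because $\|Y\|$ and $\|Y'\|$ are tempered. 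The pullback attraction in (\ref{eq6}) is immediate from the Cauchy estimate; the forward statement follows by reapplying the contraction bound to $\phi(t,\omega,\cdot)$ restricted to $C(\omega)$ and invoking $Y(\theta_t\omega)=\phi(t,\omega,Y(\omega))$. Finally, temperedness of $\|Y\|$ is obtained by combining $Y(\omega)=\phi(1,\theta_{-1}\omega,Y(\theta_{-1}\omega))$ with the temperedness of the time-$1$ cocycle evaluated on the selector $Y$ itself.

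The main obstacle I expect is the careful bookkeeping of the two temperedness assumptions against the exponential contraction: one must check that the sub-exponential growth of both $\|y(\theta_{-n}\omega)\|$ and of $\sup_{t\in[0,1]}\|\phi(t,\theta_{-t}\omega,y(\theta_{-t}\omega))\|$ is compatible with the random, non-uniform rate coming from $\sum_i k(\theta_{-i}\omega)$. A secondary technical point is that the log-Lipschitz bound encoded by $k(\omega)$ only applies on $C(\omega)$, so forward invariance of $C$ must be used at every step of the cocycle composition; if $C$ were not forward invariant the iteration could escape the set on which the contraction estimate holds, and the whole argument would break down.
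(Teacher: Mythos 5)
The paper offers no proof of this lemma at all---it simply defers to Schmalfuss \cite{schmalfuss1998random} and Caraballo et al.\ \cite{caraballo2004exponentially}---and your proposal is a correct reconstruction of exactly the standard argument used in those references: Birkhoff applied to $k$ along the backward orbit, the cocycle factorization into time-one maps kept inside $C$ by forward invariance, the resulting exponential contraction making the pullback iterates Cauchy, and then the fixed-point, uniqueness, attraction, and temperedness claims. The only bookkeeping slip is minor: in the discrete Cauchy estimate the sub-exponential bounds on $\|y(\theta_{-n}\omega)\|$ and on the intermediate iterate both come directly from temperedness of the set $C$ itself (everything in sight lies in $C(\theta_{-n}\omega)$), whereas the temperedness hypothesis on the variable in (\ref{phi}) is what you need to interpolate from integer to continuous time; likewise, temperedness of $\|Y\|$ follows immediately from $Y(\omega)\in C(\omega)$ and temperedness of $C$.
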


\subsection{Random fixed points for the fast equation with frozen solution of the slow equation}
To obtain the random fixed points of the fast component, we assume further that
\begin{enumerate}
\item[(A5)] Let $A$ be a positive matrix in $\RR^{m\times m}$ such that
\[
(Ay,y)\ge \lambda_A  \|y\|^2\quad \text{for all } y\in\RR^m,
\] and $g$ has the following relation
\[g(x,y)=-Ay+\tilde g(x,y)\] where
$\tilde g : \mathbb{R}^{n} \times \mathbb{R}^{m} \rightarrow \mathbb{R}^{m}$ is Lipschitz continuous with $\lambda_A>L_{\tilde g}>0$ such that
\[
\|\tilde g(x_1,y_1)-\tilde g(x_2,y_2)\|\le L_{\tilde g}(\|x_1-x_2\|+\|y_1-y_2\|).
\]
\end{enumerate}

We would like to deal with random fixed points of the RDS generated by the equation (\ref{eq2}) for the Banach space $B=\RR^m$:
\begin{equation}\label{eq2}
dy=\frac{1}{\eps}(-Ay+\tilde g(x,y))dt +d\omega_{2,\eps},\quad y(0)=y_0
\end{equation}
for every $x\in \RR^n$.
An RDS is often generated  by the solution of an SDE. For our case we consider
the equation
\[
\frac{dy^{x,\eps}(t)}{dt}=\frac{1}{\eps}(-Ay^{x,\eps}(t)+\tilde
g^\eps(x,y^{x,\eps}(t),\theta_t\omega_2)),\quad \tilde
g^\eps(x,y,\omega_2))=\tilde g(x,y+Z^\eps(\omega_2)).
\]
The solutions of this equation generate an RDS
$\tilde\phi^{x,\eps}$. Consider the conjugated RDS
\begin{align*}
\phi^{x,\eps}(t,\omega_2,y_0)&=T^\eps(\theta_t\omega_2,\tilde
\phi^{x,\eps}(t,\omega_2,(T^{\eps})^{-1}(\omega_2,y_0))), \\
T^\eps(\omega_2,y)&=y+Z^\eps(\omega_2),\quad
(T^{\eps})^{-1}(\omega,y))=y-Z^\eps(\omega).
\end{align*}
Then $\phi^{x,\omega}(\cdot,\omega_2,y_0)$ presents a solution  of (\ref{eq2}).

\begin{lemma}\label{l3}
Let $C^x(\omega_2)$ be the ball with center $0$ and square radius
\[
\rho^x(\omega_2)^2=2\int_{-\infty}^0e^{\frac{(\lambda_A  -C_{\tilde g}-\mu )r}{\eps}}
\frac{C_{\hat g}}{\eps\mu}^2(3\|x\|^2+3\|Z^\eps(\theta_r\omega_2)\|^2+3\|\tilde g(0,0,0)\|^2)dr
\]
where $0< \mu<\lambda_A-C_{\tilde g}$.
Then we have
\[
\tilde \phi^{x,\eps}(t,\omega_2,C^x(\omega_2)(\omega_2))\subset C^x(\omega_2)(\theta_t\omega_2),\quad t\ge 0.
\]
In addition, $\tilde \phi^{x,\eps}$ has a random fixed point $\tilde Y_F^\eps(\cdot,x)$ in $C^x(\omega_2)$.
\end{lemma}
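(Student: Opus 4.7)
The plan is to verify the hypotheses of Lemma \ref{l6} for the RDS $\tilde\phi^{x,\eps}$ generated by the conjugated random ODE. Since the conjugation has removed the noise, each trajectory $t \mapsto y^{x,\eps}(t)$ is classically differentiable, so I can work with the energy equality for $\|y^{x,\eps}(t)\|^2$.

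First I would establish the forward invariance of the random ball $C^x(\omega_2)$. Differentiating $\frac{1}{2}\|y^{x,\eps}(t)\|^2$ along a solution and using (A5) together with the Lipschitz estimate $\|\tilde g(x, y+Z^\eps) - \tilde g(0,0)\| \le C_{\tilde g}(\|x\| + \|y\| + \|Z^\eps(\theta_t\omega_2)\|) + \|\tilde g(0,0)\|$, followed by Young's inequality with parameter $\mu \in (0, \lambda_A - C_{\tilde g})$, yields a dissipative differential inequality of the form
\begin{equation*}
\frac{d}{dt}\|y^{x,\eps}(t)\|^2 \le -\frac{2(\lambda_A - C_{\tilde g} - \mu)}{\eps}\|y^{x,\eps}(t)\|^2 + \frac{2 C_{\tilde g}^2}{\eps^2\mu}\bigl(3\|x\|^2 + 3\|Z^\eps(\theta_t\omega_2)\|^2 + 3\|\tilde g(0,0)\|^2\bigr).
\end{equation*}
Applying the variation of constants (Gronwall) formula and substituting $r = s-t$ in the resulting integral produces exactly the radius $\rho^x(\theta_t\omega_2)^2$ appearing in the statement, so $\|y_0\|^2 \le \rho^x(\omega_2)^2$ forces $\|y^{x,\eps}(t)\|^2 \le \rho^x(\theta_t\omega_2)^2$. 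This gives the cocycle inclusion $\tilde\phi^{x,\eps}(t,\omega_2, C^x(\omega_2)) \subset C^x(\theta_t\omega_2)$.

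Next I would check the contraction hypothesis. For two solutions $y_1, y_2$ of the conjugated equation with identical data $(x,\omega_2)$ but distinct initial conditions, the difference satisfies
\begin{equation*}
\frac{d}{dt}\|y_1 - y_2\|^2 \le \frac{2}{\eps}(-\lambda_A + C_{\tilde g})\|y_1 - y_2\|^2,
\end{equation*}
so $\|\tilde\phi^{x,\eps}(1,\omega_2,y_1) - \tilde\phi^{x,\eps}(1,\omega_2,y_2)\| \le e^{-(\lambda_A - C_{\tilde g})/\eps}\|y_1 - y_2\|$. Thus the quantity $k(\omega_2)$ from Lemma \ref{l6} is bounded above by the deterministic negative constant $-(\lambda_A - C_{\tilde g})/\eps$, and in particular $\EE k < 0$ trivially.

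It remains to verify the temperedness conditions. By Lemma \ref{l1} the stationary Ornstein--Uhlenbeck process $t \mapsto Z^\eps(\theta_t\omega_2)$ is $\eta$-H\"older continuous on any compact interval and has sub-linear growth on a $(\theta_t)_{t\in\RR}$-invariant full-measure set; together with the explicit integral defining $\rho^x$ this shows that $\omega_2 \mapsto \sup_{x\in C^x(\omega_2)}\|x\| = \rho^x(\omega_2)$ is tempered, and similarly that the random variable in (\ref{phi}) is tempered for any measurable selector (using the invariance estimate from Step~1). Lemma \ref{l6} then delivers a unique random fixed point $\tilde Y_F^\eps(\cdot,x) \in C^x(\omega_2)$. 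The main obstacle is bookkeeping: one must set up Young's inequality and the shift in the Gronwall integral so that the resulting bound coincides cleanly with the stated $\rho^x(\theta_t\omega_2)$ (in particular tracking the $1/\eps$ scaling and the factor of $3$ inside the bracket), but once this matching is carried out the rest is a direct application of the abstract fixed point lemma.
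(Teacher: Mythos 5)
Your proposal is correct and follows essentially the same route as the paper: energy estimate plus Young's inequality with parameter $\mu$ to get the dissipative differential inequality, variation of constants to identify $\rho^x$ and obtain forward invariance, the Gronwall contraction estimate $\|y_1-y_2\|^2$ decaying at rate $2(\lambda_A-C_{\tilde g})/\eps$, and temperedness of the radius feeding into Lemma \ref{l6}. The only cosmetic difference is that the paper routes the invariance argument through an explicit scalar comparison ODE whose tempered fixed point is the radius, whereas you apply the variation-of-constants formula directly to the differential inequality; these are equivalent.
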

\begin{proof}
We have
\begin{align*}
\frac{d}{dt}\|y^{x,\eps}(t)\|^2\le &-\frac{2}{\eps} \lambda_A
\|y^{x,\eps}(t)\|^2+\frac{2}{\eps}(\tilde
g(x,y^{x,\eps}(t),\theta_t\omega_2),y^{x,\eps}(t))\\
\le &-\frac{2}{\eps} \lambda_A  \|y^{x,\eps}(t)\|^2+\frac{2}{\eps}\|\tilde
g(x,y^{x,\eps}(t),\theta_t\omega_2)-\tilde
g(x,0,\theta_t\omega_2)\|\|y^{x,\eps}(t)\|\\
&+\frac{2}{\eps}\|\tilde g(x,0,\theta_t\omega_2)\|\|y^{x,\eps}(t)\|\\
\le& -\frac{2(\lambda_A -C_{\tilde g})}{\eps}
\|y^{x,\eps}(t)\|^2+\frac{2}{\eps}\|\tilde
g(x,0,\theta_t\omega_2)\|\|y^{x,\eps}(t)\|\\
\le& -\frac{2(\lambda_A -C_{\tilde g})}{\eps}
\|y^{x,\eps}(t)\|^2+\frac{2}{\eps}\|\tilde
g(x,0,\theta_t\omega_2)-\tilde
g(0,0,0)\|\|y^{x,\eps}(t)\|\\
&+\frac{2}{\eps}\|\tilde
g(0,0,0)\|)\|y^{x,\eps}(t)\|.
\end{align*}
Note that
\begin{align*}
    2\|\tilde   g(x,0,\theta_t\omega_2)-\tilde
g(0,0,0)\|& \|y^{x,\eps}(t)\|+2\|\tilde
g(0,0,0)\|)\|y^{x,\eps}(t)\| \\
    & \le  \frac{C_{\tilde g}}{\mu}^2(\|x\|+\|Z^\eps(\theta_r\omega_2)\|+\|\hat
g(0,0,0)\|)^2+\mu\|y^{x,\eps}(t)\|^2.
\end{align*}
We obtain
that the ball $C^x(\omega_2)$ with center zero and square radius
\begin{align}\label{eq7}
\begin{split}
\frac{d\hat Y^\eps(t,\omega_2,x)}{dt}=&-\frac{\lambda_A-C_{\tilde g}-\mu}{\eps}\hat Y^\eps(t,\omega_2,x)\\
&
+\frac{C_{\tilde g}}{\mu\eps}^2(\|x\|+\|Z^\eps(\theta_t\omega_2)\|+\|\hat
g(0,0,0)\|)^2+\frac{1}{\eps}\mu.
\end{split}
\end{align}
The variation of constants method and a comparison argument show that $$\|y^{\eps,x}(t,\omega_2)\|^2\le \hat Y^\eps(t,\omega_2,x).$$ In addition (\ref{eq7}) has the unique random and tempered fixed point $\rho^x(\omega_2)/2$. We can conclude that $\|\tilde Y_F^\eps(\omega_2,x)\|^2\le \rho^x(\omega_2)$, see  Chueshov and
Schmalfuss \cite[Theorem 3.1.23]{chueshov2020synchronization}.\\
We have for $y_i^{x,\eps}=\tilde \phi(t,\omega_2,y_i)$ for $y_i\in \RR^m$
\[
\frac{d\|y^{x,\eps}_1(t)-y^{x,\eps}_2(t)\|^2}{dt}\le\frac{1}{\eps}(-2\lambda_A+2C_{\hat
g})\|y^{x,\eps}_1(t)-y^{x,\eps}_2(t)\|^2.
\]
Then the Gronwall-lemma gives the contraction condition. The other
condition of the fixed point theorem follows  from the forward invariance of $\tilde \phi^{x,\eps}$:
\[
\tilde \phi^{x,\eps}(t,\theta_{-t}\omega_2,y(\theta_{-t}\omega_2))\in
C^x(\omega_2)
\]
for every measurable selector $y$ of $C^x(\omega_2)$. But $C^x(\omega_2)$ has a tempered radius.
\end{proof}
\begin{remark}\label{3.11}
(1) We note that the radius $\rho^x(\omega_2)$ of $C^x(\omega_2)$ can be chosen independently of $\eps$
which follows by the simple integral substitution $r^\prime =r/\eps$
and Lemma \ref{l2}.\\
(2) The RDS $\phi^{x,\eps}$ has the random fixed point
\[
Y_F^\eps(\omega_2,x)=\tilde Y_F^\eps(\omega,x)+Z^\eps(\omega_2)
\]
contained in $C^x(\omega_2)(\omega_2)+Z^\eps(\omega_2)$ with center $Z^\eps(\omega_2)$ and square radius $\rho^x(\omega_2)^2$.\\
(3) We have
\[
\tilde Y_F^\eps(\theta_t\omega_2,x)=\tilde Y_F^1(\theta_\frac{t}{\eps}\omega_2,x)=\tilde Y_F(\theta_\frac{t}{\eps}\omega_2,x).
\]
The same holds for $Y_F^\eps$.\\
(4) The contraction constant $k$ of Lemma \ref{l6} is independent of $x$ and $\omega_2$ and
holds on $\RR^m$ because the random set $C^x$ is pullback absorbing. Then we can conclude that the random fixed point is
unique in $\RR^m$.\\
(5) $Y^\eps_F(\omega_2,x), \tilde Y^\eps_F(\omega_2,x)$ depend Lipschitz-continuously on $x$ with Lipschitz-constant $C_{\tilde g}/(\lambda_A-C_{\tilde g})$.
\end{remark}
For the proof of this remark we refer to Pei et al. \cite{pei2023almost}.

\subsection{An ergodic theorem}
Now, we formulate an ergodic theorem. By (A4)  $f$ is bounded.
Define
\begin{eqnarray}\label{conave1}
     \bar f (x)=\mathbb{E}[f(x,Y^1_F(\omega_2,x))].
\end{eqnarray}

\begin{lemma}\label{flip}
     $\bar f$  is Lipschitz continuous.
\end{lemma}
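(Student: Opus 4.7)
The plan is to verify Lipschitz continuity of $\bar f$ by a direct estimate inside the expectation, exploiting two ingredients already in place: the global Lipschitz continuity of $f$ from assumption (A1), and the Lipschitz dependence of the random fixed point $Y_F^1(\omega_2,x)$ on the frozen slow input $x$ established in Remark \ref{3.11}(5), with constant $C_{\tilde g}/(\lambda_A-C_{\tilde g})$.

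Concretely, for $x_1,x_2\in\RR^n$ I would start by writing
\begin{align*}
\|\bar f(x_1)-\bar f(x_2)\|
\le \mathbb{E}\bigl\|f(x_1,Y_F^1(\omega_2,x_1))-f(x_2,Y_F^1(\omega_2,x_2))\bigr\|,
\end{align*}
then apply (A1) to bound the integrand by
\begin{align*}
C_f\bigl(\|x_1-x_2\|+\|Y_F^1(\omega_2,x_1)-Y_F^1(\omega_2,x_2)\|\bigr),
\end{align*}
and finally insert the pathwise Lipschitz bound from Remark \ref{3.11}(5) to obtain
\begin{align*}
\|\bar f(x_1)-\bar f(x_2)\|\le C_f\Bigl(1+\frac{C_{\tilde g}}{\lambda_A-C_{\tilde g}}\Bigr)\|x_1-x_2\|.
\end{align*}
Since $\lambda_A>L_{\tilde g}>0$ by (A5) (and $C_{\tilde g}$ is playing the role of $L_{\tilde g}$ here), the constant on the right is finite and deterministic, which is exactly what is needed.

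There is no serious obstacle; the only points one should double-check are that the expectation in the definition \eqref{conave1} is well-defined and finite (this is immediate from (A4), since $f$ is bounded) and that the map $\omega_2\mapsto Y_F^1(\omega_2,x)$ is measurable for each $x$, which is built into the random-fixed-point construction of Lemma \ref{l6} applied to $\phi^{x,1}$. Under these two trivial checks, dominated convergence is not even needed — the pointwise Lipschitz estimate survives directly under the expectation, yielding the claim with explicit Lipschitz constant $C_f\lambda_A/(\lambda_A-C_{\tilde g})$.
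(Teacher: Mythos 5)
Your argument is correct and is exactly the standard proof the paper delegates to \cite[Lemma 4.10]{pei2023almost}: combine the global Lipschitz bound (A1) on $f$ with the Lipschitz dependence of $x\mapsto Y_F^1(\omega_2,x)$ from Remark \ref{3.11}(5), and pass the pointwise estimate through the expectation to get the deterministic constant $C_f\lambda_A/(\lambda_A-C_{\tilde g})$. No issues.
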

\begin{proof}
The proof is similar to \cite[Lemma 4.10]{pei2023almost}.
\end{proof}
\begin{lemma}\label{l5}
There exists $(\theta_t)_{t\in\RR}$ invariant set of full measure so that for every
$\omega_2$ from this set and $x\in \RR^m$ we have
\[
\lim_{T\to\pm \infty}\bigg\| \frac{1}{T}\int_0^T (f(x, Y_F^1(\theta_r\omega_2,x))-\bar
f(x)) dr\bigg\| =0.
\]
\end{lemma}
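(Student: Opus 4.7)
The strategy is to apply the Birkhoff ergodic theorem to the ergodic metric dynamical system $(\Omega_2,\mathscr{F}_2,\mathbb{P}_{\hat H},\theta)$ to the observable $\omega_2\mapsto F_x(\omega_2):=f(x,Y_F^{1}(\omega_2,x))$, and then use a Lipschitz/density argument to obtain an exceptional set which is independent of $x$ and $(\theta_t)_{t\in\mathbb{R}}$-invariant. First I would record the two inputs needed before invoking Birkhoff. Measurability of $Y_F^{1}(\cdot,x)$ follows from its construction as a pullback limit in \cref{l6} (the limit in \eqref{eq6}), and integrability of $F_x$ is immediate from the boundedness assumption (A4), which gives $\mathbb{E}\|F_x\|\le\|f\|_\infty<\infty$. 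Because $Y_F^{1}(\theta_t\omega_2,x)=\phi^{x,1}(t,\omega_2,Y_F^{1}(\omega_2,x))$ by the random fixed point property, the process $t\mapsto F_x(\theta_t\omega_2)$ is strictly stationary.

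Next, for each fixed $x\in\mathbb{R}^n$, Birkhoff's ergodic theorem (valid for the ergodic, measure-preserving flow $\theta$ on $(\Omega_2,\mathscr F_2,\mathbb P_{\hat H})$, both in forward and in backward time) yields a full-measure set $\Omega_2^{x}$ on which
\[
\lim_{T\to\pm\infty}\frac{1}{T}\int_{0}^{T}F_x(\theta_r\omega_2)\,dr=\mathbb{E}[F_x]=\bar f(x).
\]
By the usual argument (the Birkhoff limit is invariant under $\theta_s$ because changing $\omega_2$ to $\theta_s\omega_2$ alters the integral by at most $2s\|f\|_\infty$ which is absorbed by the $1/T$ factor) the set $\Omega_2^{x}$ can be chosen $(\theta_t)_{t\in\mathbb{R}}$-invariant.

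The main obstacle is to pass from this $x$-dependent exceptional set to a single null set working uniformly in $x$. The plan here is to exploit the two Lipschitz estimates already available: by \cref{3.11}(5), $x\mapsto Y_F^{1}(\omega_2,x)$ is Lipschitz with constant $L_Y:=C_{\tilde g}/(\lambda_A-C_{\tilde g})$ independent of $\omega_2$; by (A1), $f$ is globally Lipschitz in both variables; and by \cref{flip}, $\bar f$ is Lipschitz as well. Consequently
\[
\big\|F_x(\omega_2)-F_{x'}(\omega_2)\big\|\le C_f(1+L_Y)\|x-x'\|,\qquad \|\bar f(x)-\bar f(x')\|\le C_{\bar f}\|x-x'\|,
\]
with constants independent of $\omega_2$. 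Pick a countable dense subset $\{x_n\}_{n\in\mathbb{N}}\subset\mathbb{R}^n$ and set $\Omega_2^{*}:=\bigcap_{n}\Omega_2^{x_n}$, which is $(\theta_t)$-invariant and of full $\mathbb P_{\hat H}$-measure. For any $x\in\mathbb{R}^n$ and any $\delta>0$, choose $x_n$ with $\|x-x_n\|\le\delta$; then for $\omega_2\in\Omega_2^{*}$,
\[
\bigg\|\frac{1}{T}\!\int_{0}^{T}\!\!\!(F_x(\theta_r\omega_2)-\bar f(x))\,dr\bigg\|\le \bigg\|\frac{1}{T}\!\int_{0}^{T}\!\!\!(F_{x_n}(\theta_r\omega_2)-\bar f(x_n))\,dr\bigg\|+\big(C_f(1+L_Y)+C_{\bar f}\big)\delta.
\]
Letting $T\to\pm\infty$ and then $\delta\to 0$ yields the desired conclusion on $\Omega_2^{*}$, completing the argument.
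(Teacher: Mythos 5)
Your proposal is correct and follows essentially the same route as the paper: a per-$x$ application of the ergodic theorem to the stationary observable $f(x,Y_F^1(\theta_r\omega_2,x))$, intersection over a countable dense set of $x$'s, and a Lipschitz approximation argument (using (A1), \cref{flip}, and \cref{3.11}(5)) to pass to arbitrary $x$. The only difference is that you spell out the Birkhoff hypotheses (measurability, integrability, stationarity) that the paper leaves implicit, which is a welcome addition rather than a deviation.
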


\begin{proof}
Let $\Omega_x\in \fF$ be a $(\theta_t)_{t\in\RR}$-invariant set of full
measure  so that
\[
\lim_{T\to\pm\infty} \frac{1}{T} \int_0^T(f(x,Y_F^1(\theta_r\omega_2,x))-\bar f(x)) dr=0
\]
for $x\in \RR^n$. Let $D\subset \RR^n$ be a dense countable set. Then
$\bigcap_{x\in D}\Omega_x$ has full measure and is
$(\theta_t)_{t\in\RR}$-invariant.  We now choose an $x\not\in D$ and a
sequence $(x_n)_{n\in\NN}$ in $D$ for an arbitrary $\zeta>0$ so that for
sufficiently large $\tilde n$ we have that $\|x_{\tilde n}-x\|<\zeta/2$.
By the Lipschitz continuity of $x\mapsto \bar f(x), \quad\text{and }x\mapsto Y_F^1(\omega,x)$
we have that
\begin{align*}
\bigg\| \frac{1}{T}&\int_0^T (f(x, Y_F^1(\theta_r\omega_2,x))-\bar f(x))
dr\bigg\|\\
\le& \bigg\| \frac{1}{T}\int_0^T (f(x, Y_F^1(\theta_r\omega_2,x))dr-\frac{1}{T}\int_0^T (f(x_{\tilde n}, Y_F^1(\theta_r\omega_2,x_{\tilde n})))dr\bigg\|\\
&+\bigg\| \frac{1}{T}\int_0^T  (f(x_{\tilde n}, Y_F^1(\theta_r\omega_2,x_{\tilde n}))-\bar f(x_{\tilde n}))dr\bigg\|
+\bigg\| \frac{1}{T}\int_0^T  (\bar f(x_{\tilde n}))-\bar f(x))dr\bigg\|.
\end{align*}
The first and the last term of the right hand side of this inequality can by made smaller than $C\|x-x_{\tilde n}\|\le C\zeta/2$
where $C$ estimates the Lipschitz-constants of $f,\,\bar f$.
The other term   can be made smaller than $\zeta/2$ for large $|T|$. Hence choosing $\zeta$ sufficiently small the left hand side can be made arbitrarily small.
\end{proof}
\section{Almost sure averaging for fast-slow SDEs}\label{s4}
\subsection{Some a-priori estimates of the fast component}
 Following the discretization techniques inspired by
Khasminskii in
             \cite{khasminskii1968on},  we divide $[0,T]$ into intervals of
size $\delta$,
             where $\delta \in(0,1)$ is a fixed number.
Note that (A5) ensures that the group $\Phi_A(t) = e^{-At}, t\in \mathbb{R}$ generated by $A$ satisfies the following properties
\begin{align}
\|\Phi_A(t)Y\|& \leq e^{-\lambda_{A} t} \|Y\|, \, {\rm for} \, t\geq 0, Y\in \mathbb{R}^m
\end{align}
where $\lambda_A>0$ defined in (A5). Then,
we construct an auxiliary process $\hat{Y}^{\eps}$ and for $t\in[k\delta, (k+1)\delta),$
\begin{eqnarray}
\label{yhat} \hat{Y}^{\eps}_t&=&\Phi_{{A}/{\eps}} (t-k\delta)\hat{Y}^{\eps}_{k\delta}+\frac{1}{\eps}\int_{k\delta}^t\Phi_{{A}/{\eps}}(t-r)
\tilde g(X^{\eps}_{k\delta},\hat{Y}^{\eps}_r)\,dr\cr
&&+\int_{k\delta}^t \Phi_{{A}/{\eps}}(t-r)\,d\omega_{2,\eps}(r)
\end{eqnarray}
i.e. for  $t\in [0,T],$
\begin{eqnarray}
\quad \label{yhat0}\hat{Y}^{\eps}_t=\Phi_{{A}/{\eps}} (t)\hat{Y}^{\eps}_0+\frac{1}{\eps}\int_0^t \Phi_{{A}/{\eps}}(t-r) \tilde g(X^{\eps}_{r_{\delta}},\hat{Y}^{\eps}_r)\,dr+\int_0^t\Phi_{{A}/{\eps}}(t-r)\,d\omega_{2,\eps}(r)
\end{eqnarray}
where $r_{\delta}=\lfloor r / \delta\rfloor \delta$ is the
nearest
             breakpoint preceding $r$.

\begin{lemma}\label{ybound}
	For any solution $Y^\eps$ of {\rm(\ref{fastpath})}, we have
	\begin{eqnarray*}
	   \|Y^\eps\|_{\infty}+\|\hat Y^\eps\|_{\infty}
                     \le C_3 +C_4 o(\eps^{-1}).
\end{eqnarray*} where $C_3$ may depend on $\lambda_A, C_1, C_{\tilde g}, \|g(0,0)\|, \|Y_{0}\|$ and $C_4$ may depend on  $\lambda_A, C_{\tilde g}$  and $\interleave \omega_2 \interleave_{\gamma,[0,T]}$.
\end{lemma}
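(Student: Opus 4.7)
The plan is to reduce both $Y^\eps$ and $\hat Y^\eps$ to pathwise ODEs by subtracting the stationary Ornstein--Uhlenbeck process $Z^\eps(\theta_t\omega_2)$ introduced in Section \ref{s3-2}, and then close an energy estimate using the coercivity of $A$ together with the Lipschitz condition on $\tilde g$ from (A5). Since $Z^\eps$ absorbs the rough driver, the resulting equation is classical and Gronwall-type arguments apply.

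The first step is to set $U^\eps_t := Y^\eps_t - Z^\eps(\theta_t\omega_2)$. Using the definition of $Z^\eps$ as the stationary solution of $dZ = -\frac{1}{\eps}AZ\,dt + d\omega_{2,\eps}$, the rough (FBM) term in (\ref{fastpath}) cancels and $U^\eps$ solves the pathwise ODE
\begin{equation*}
\frac{dU^\eps_t}{dt} = \frac{1}{\eps}\bigl(-AU^\eps_t + \tilde g(X^\eps_t, U^\eps_t + Z^\eps(\theta_t\omega_2))\bigr),\qquad U^\eps_0 = Y_0 - Z^\eps(\omega_2).
\end{equation*}
Taking the inner product with $U^\eps_t$, using $(Ay,y)\ge \lambda_A\|y\|^2$, the bound $\|\tilde g(x,y)\|\le \|\tilde g(0,0)\| + L_{\tilde g}(\|x\|+\|y\|)$, and Young's inequality with parameter $\mu\in(0,2(\lambda_A-L_{\tilde g}))$, yields
\begin{equation*}
\frac{d}{dt}\|U^\eps_t\|^2 \le -\frac{\gamma}{\eps}\|U^\eps_t\|^2 + \frac{1}{\mu\eps}\bigl(\|\tilde g(0,0)\| + L_{\tilde g}(\|X^\eps_t\| + \|Z^\eps(\theta_t\omega_2)\|)\bigr)^2,
\end{equation*}
with $\gamma:=2(\lambda_A-L_{\tilde g})-\mu>0$.

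The second step is a standard Gronwall/variation-of-constants argument: integrating the differential inequality gives
\begin{equation*}
\|U^\eps_t\|^2 \le \|Y_0 - Z^\eps(\omega_2)\|^2 + \frac{1}{\mu\gamma}\sup_{r\in[0,T]}\bigl(\|\tilde g(0,0)\| + L_{\tilde g}(\|X^\eps_r\| + \|Z^\eps(\theta_r\omega_2)\|)\bigr)^2,
\end{equation*}
where the factor $\eps^{-1}$ in the forcing cancels against $\int_0^t e^{-\gamma(t-r)/\eps}dr\le \eps/\gamma$. Now I apply \cref{xbound} to bound $\|X^\eps\|_\infty\le C_1$ independently of $\eps$, and \cref{l1}(3) to get $\sup_{r\in[0,T]}\|Z^\eps(\theta_r\omega_2)\| = o(\eps^{-1})$ with a constant depending only on $\lambda_A$ and $\interleave\omega_2\interleave_{\gamma,[0,T]}$. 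Taking square roots and using $\|Y^\eps_t\|\le \|U^\eps_t\| + \|Z^\eps(\theta_t\omega_2)\|$, together with $g(0,0)=\tilde g(0,0)$, yields the desired bound $\|Y^\eps\|_\infty \le C_3 + C_4 o(\eps^{-1})$ with the claimed dependencies.

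The third step is to repeat the argument verbatim for $\hat Y^\eps$. Differentiating (\ref{yhat0}) shows that $\hat Y^\eps$ satisfies
\begin{equation*}
d\hat Y^\eps_t = \frac{1}{\eps}\bigl(-A\hat Y^\eps_t + \tilde g(X^\eps_{t_\delta},\hat Y^\eps_t)\bigr)dt + d\omega_{2,\eps}(t),
\end{equation*}
which has the same structure as (\ref{fastpath}) except that the slow input $X^\eps_t$ is replaced by the piecewise-frozen input $X^\eps_{t_\delta}$. Since $\|X^\eps_{t_\delta}\|\le \|X^\eps\|_\infty \le C_1$ for every $t$, the energy estimate proceeds identically with $\hat U^\eps_t := \hat Y^\eps_t - Z^\eps(\theta_t\omega_2)$ and yields the same bound. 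The main (rather mild) obstacle is making sure that the constants on the right-hand side are uniform in $\eps$ and $\delta$: uniformity in $\eps$ is precisely what the $o(\eps^{-1})$ decay of $\sup\|Z^\eps\|$ is designed to provide (thanks to \cref{l2}, the scale-invariant form of $Z^\eps$), and uniformity in $\delta$ is automatic because the bound on the slow variable does not see the discretization. Summing the two estimates produces the claimed inequality.
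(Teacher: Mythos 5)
Your proof is correct and follows essentially the same route as the paper: both arguments decompose the fast variable through the stationary Ornstein--Uhlenbeck process $Z^\eps(\theta_t\omega_2)$ so that the FBM increment disappears, bound the slow input by $C_1$ via Lemma~\ref{xbound}, invoke Lemma~\ref{l1}(3) for $\sup_{t\in[0,T]}\|Z^\eps(\theta_t\omega_2)\|=o(\eps^{-1})$, and close the self-referential estimate using $\lambda_A>C_{\tilde g}$. The only difference is one of bookkeeping: the paper works with the mild (variation-of-constants) form of $Y^\eps$ itself and absorbs $\lambda_A^{-1}C_{\tilde g}\sup_t\|Y^\eps_t\|$ into the left-hand side of a sup-norm inequality, whereas you run a quadratic energy/Gronwall estimate on $U^\eps=Y^\eps-Z^\eps(\theta_\cdot\omega_2)$ (the same computation the paper uses for Lemma~\ref{l3}), and both yield the claimed constants and dependencies.
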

      \begin{proof}
                  For $t\in[0,T]$, from (\ref{fastpath}), one has
                  \begin{align*}
                      Y^\eps_t=&\,\Phi_{ A/\eps}(t)
Y_0+\frac{1}{\eps}\int_{0}^{t}\Phi_{ A/\eps}(t-r)\tilde g(
             X^\eps_r,Y^\eps_r)\,
dr+\int_{0}^{t}\Phi_{ A/\eps}(t-r)\,d\omega_{2,\eps}(r)\cr=&\,\Phi_{ A/\eps}(t)
(Y_0-Z^\eps(\omega_2))+Z^\eps(\theta_t\omega_2)+\frac{1}{\eps}\int_{0}^{t}\Phi_{ A/\eps}(t-r)\tilde g(
             X^\eps_r,Y^\eps_r)\,dr.
                  \end{align*}

                  Then, we have
                  \begin{eqnarray*} \|Y^\eps_t\|&\le
&\|\Phi_{ A/\eps}(t)\|\|Y_0-Z^\eps(\omega_2)\|+\|Z^\eps(\theta_t\omega_2)\|+\bigg\|\frac{1}{\eps}\int_{0}^{t}\Phi_{ A/\eps}(t-r)\tilde g(
             X^\eps_r,Y^\eps_r)\,dr\bigg\|\cr
                      &\le & e^{-\lambda_A t/\eps }\|Y_0-Z^\eps(\omega_2)\|+\|Z^\eps(\theta_t\omega_2)\|\cr
                      &&+
\frac{1}{\eps}\int_{0}^te^{-\lambda_A(t-r)/\eps}\big(\|g(0,0)\|+C_{\tilde g}(\|X^{\eps}_r\|+\|Y^{\eps}_r\|)\big)\,dr.
                  \end{eqnarray*}

                  By \cref{xbound} and (A5), it follows
                  \begin{eqnarray*}
                      \sup_{t\in[0,T]}\|Y^\eps_t\|
                      &\le
&\|Y_0\|+2\sup_{t\in[0,T]}\|Z^\eps(\theta_t\omega_2)\|\cr&&+\sup_{t\in[0,T]}
\frac{1}{\eps}\int_{0}^te^{- \lambda_A (t-r) / \eps}(\|g(0,0)\|+C_{\tilde g}(\|X^{\eps}_r\|+\|Y^{\eps}_r\|))\,dr\cr
                      &\le & \|Y_0\|+2
\sup_{t\in[0,T]}\|Z^\eps(\theta_t\omega_2)\|+ \lambda_A^{-1}(\|g(0,0)\|+C_{\tilde g} C_1 +C_{\tilde g}\sup_{t\in[0,T]}\|Y^{\eps}_t\|).
                  \end{eqnarray*}
                  Then, by $\lambda_A>C_{\tilde g}$ and Lemma \ref{l1}, we have
                  \begin{eqnarray*}
               \|Y^\eps\|_{\infty}
                      \le C_3 +C_4 o(\eps^{-1}).
                  \end{eqnarray*} where $C_3$ may depend on $\lambda_A, C_1, C_{\tilde g}, \|g(0,0)\|, \|Y_{0}\|$ and $C_4$ may depend on  $\lambda_A, C_{\tilde g},$  and $\interleave \omega_2 \interleave_{\gamma,[0,T]}$.
                    The estimate for  $\|\hat
Y^\eps\|_{\infty}$ can
             be obtained in a similar way.
             \end{proof}
\begin{lemma}\label{y-yhat}
Consider the solutions $\hat Y^\eps$ of {\rm (\ref{yhat})} and $Y^\eps$ of {\rm (\ref{fastpath})},
there exists $\eps^\prime_0(\delta)>0$,  for any $\eps<\eps^\prime_0(\delta)$, such that
	\begin{eqnarray*}
\int_{k\delta}^{(k+1)\delta}\|Y^\eps_r-\hat Y^\eps_r\|\,dr
		&\le& C_5 \delta^{1+\gamma},\quad {\rm for \, any}\,\,0\leq k \leq \lfloor T / \delta\rfloor-1\cr
\int_{t_\delta}^{t}\|Y^\eps_r-\hat Y^\eps_r\|\,dr
		&\le& C_5 \delta^{1+\gamma},\quad {\rm for \, any}\, \, 0\leq t \leq T
	\end{eqnarray*}
hold, where $C_5$ is a constant which is independent of $\eps$ and $\delta$.
\end{lemma}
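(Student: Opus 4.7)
The plan is to write $e(r):=Y^\eps_r-\hat Y^\eps_r$ in mild form using the semigroup $\Phi_{A/\eps}$, exploit that the stochastic integrals in \eqref{fastpath} and \eqref{yhat0} are identical and so cancel in the difference, and then close a self-consistent supremum bound using the dissipativity $\lambda_A>C_{\tilde g}$ from (A5); the H\"older modulus for $X^\eps$ established in \cref{xdelta} supplies the $\delta^\gamma$ gain that ultimately produces the $\delta^{1+\gamma}$ rate after one further integration.

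Concretely, using the splitting $g(x,y)=-Ay+\tilde g(x,y)$ and the variation-of-constants formula, \eqref{fastpath} is rewritten as
\begin{equation*}
Y^\eps_t=\Phi_{A/\eps}(t)Y_0+\frac{1}{\eps}\int_0^t\Phi_{A/\eps}(t-r)\tilde g(X^\eps_r,Y^\eps_r)\,dr+\int_0^t\Phi_{A/\eps}(t-r)\,d\omega_{2,\eps}(r).
\end{equation*}
Since $\hat Y^\eps_0=Y_0$, subtracting this from \eqref{yhat0} eliminates both the initial and the stochastic terms, leaving
\begin{equation*}
e(t)=\frac{1}{\eps}\int_0^t\Phi_{A/\eps}(t-r)\bigl[\tilde g(X^\eps_r,Y^\eps_r)-\tilde g(X^\eps_{r_\delta},\hat Y^\eps_r)\bigr]\,dr.
\end{equation*}
Combining the Lipschitz bound on $\tilde g$ from (A5), the operator estimate $\|\Phi_{A/\eps}(t-r)\|\le e^{-\lambda_A(t-r)/\eps}$ and the key inequality $\|X^\eps_r-X^\eps_{r_\delta}\|\le C_2\delta^\gamma$ from \cref{xdelta}, I would obtain
\begin{equation*}
\|e(t)\|\le\frac{C_{\tilde g}}{\eps}\int_0^t e^{-\lambda_A(t-r)/\eps}\bigl[C_2\delta^\gamma+\|e(r)\|\bigr]\,dr.
\end{equation*}
Evaluating the exponential integral directly bounds its factor by $C_{\tilde g}/\lambda_A<1$, and dominating $\|e(r)\|$ inside by $\sup_{s\in[0,T]}\|e(s)\|$ (finite by \cref{ybound}) produces a self-consistent scalar inequality. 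Taking the supremum on both sides and rearranging yields the pointwise bound
\begin{equation*}
\sup_{t\in[0,T]}\|e(t)\|\le\frac{C_{\tilde g}C_2}{\lambda_A-C_{\tilde g}}\,\delta^\gamma,
\end{equation*}
uniformly in $\eps$. Since $[k\delta,(k+1)\delta]$ and $[t_\delta,t]$ both have length at most $\delta$, integrating this bound over them delivers the claimed $C_5\delta^{1+\gamma}$ control.

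The main obstacle is the closure of the supremum inequality: this is precisely where the strict dissipativity assumption $\lambda_A>C_{\tilde g}$ in (A5) is indispensable, since without it the contracting factor $C_{\tilde g}/\lambda_A$ would not be less than one and the rearrangement would fail, leading to geometric error amplification across the $\lfloor T/\delta\rfloor$ blocks. The smallness restriction $\eps<\eps'_0(\delta)$ stated in the lemma is not strictly required for this clean argument (the bound above is uniform in $\eps$); it is presumably kept for compatibility with an alternative block-by-block Gronwall route that needs $e^{-\lambda_A\delta/\eps}$ to be negligible in order to suppress accumulated errors at the breakpoints.
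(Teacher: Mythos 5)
Your argument is correct, and it reaches the conclusion by a genuinely different route from the paper. The paper works block by block: on each $[k\delta,(k+1)\delta]$ it applies variation of constants from the left endpoint, runs a Gronwall argument with the integrating factor $e^{\lambda_A r/\eps}$, and is then forced to carry the boundary mismatch $e^{-\lambda_A(r-k\delta)/\eps}\|Y^\eps_{k\delta}-\hat Y^\eps_{k\delta}\|$, which it controls only crudely by $C_3+C_4\,o(\eps^{-1})$ via Lemma \ref{ybound}; after integration over the block this contributes $C\eps(C_3+C_4\,o(\eps^{-1}))$, and the requirement that this be dominated by $C_5\delta^{1+\gamma}$ is exactly where the threshold $\eps<\eps^\prime_0(\delta)$ enters. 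You instead write both processes in mild form on all of $[0,T]$, so that (with $\hat Y^\eps_0=Y_0$) the initial data and the stochastic convolutions cancel identically, and you close the estimate by absorbing $\sup_t\|e(t)\|$ into the left-hand side using $L_{\tilde g}/\lambda_A<1$ rather than by Gronwall. This yields the stronger pointwise bound $\sup_{t\in[0,T]}\|Y^\eps_t-\hat Y^\eps_t\|\le C\delta^\gamma$ uniformly in $\eps$, from which both integrated estimates follow trivially, and — as you correctly observe — it renders the restriction $\eps<\eps^\prime_0(\delta)$ unnecessary for this lemma. Two small points to make the argument airtight: (i) the absorption step requires $\sup_{t\in[0,T]}\|e(t)\|<\infty$ for each fixed $\eps$, which you should cite explicitly from Lemma \ref{ybound} before rearranging; (ii) the exact cancellation of the initial terms uses $\hat Y^\eps_0=Y_0$, which the paper leaves implicit in \eqref{yhat0} — if the auxiliary process were started elsewhere you would pick up an extra term $\Phi_{A/\eps}(t)(Y_0-\hat Y^\eps_0)$, and a trace of the paper's $\eps$-threshold would reappear. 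Neither point is a gap; both are one-line fixes.
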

 \begin{proof} For $r\in[k \delta,(k+1) \delta], 0\leq k \leq \lfloor T / \delta\rfloor-1$,
one has
         \begin{eqnarray*}
             \|Y^\eps_r-\hat Y^\eps_r\|&\le &e^{-\lambda_A (r-k \delta)/\eps  }\|
Y^\eps_{k \delta}-\hat Y^\eps_{k \delta}\|\cr
&&+
\bigg\|\frac{1}{\eps}\int_{k \delta}^r \Phi_{{A}/{\eps}}(r-v)(\tilde {g}(X^{\eps}_v,Y^{\eps}_v)-\tilde {g} (X^{\eps}_{v_{\delta}},\hat
Y^{\eps}_v))\,dv\bigg\|\cr
             &\le
&C e^{-\lambda_A(r-k \delta)/\eps  }(\|
Y^\eps\|_{\infty}+\|\hat Y^\eps\|_{\infty})\cr
             &&+\frac{C_{\tilde {g}}}{\eps}\int_{k \delta}^re^{-\lambda_A(r-v)/\eps}(\|X^{\eps}_v-X^{\eps}_{v_{\delta}}\|+\|Y^{\eps}_v-\hat
Y^{\eps}_v\|)\,dv.
         \end{eqnarray*}
         Then, multiplying both sides of the above equation by
$e^{\lambda_A r/\eps}$, due to Lemma \ref{ybound}, we have
         \begin{eqnarray*}
             e^{\lambda_A r /\eps}\|Y^\eps_r-\hat Y^\eps_r\|
&\le&C e^{\lambda_A k \delta/ \eps  }(C_3+C_4 o(\eps^{-1}))\cr
&&+\frac{C_{\tilde {g}}}{\eps}\int_{k \delta}^r e^{\lambda_A v/\eps }(\|X^{\eps}_v-X^{\eps}_{v_{\delta}}\|+\|Y^{\eps}_v-\hat
Y^{\eps}_v\|)\,dv.
         \end{eqnarray*}
         By the Gronwall-lemma \cite[p.37]{coddington1956theory}, we have
         \begin{eqnarray}\label{yy}
             \|Y^\eps_r-\hat Y^\eps_r\|
            & \leq& e^{-\lambda_A(r-k \delta)/\eps  } e^{C_{\tilde {g}}(r-k \delta)/\eps} (C_3+C_4 o(\eps^{-1}))\cr
&&+
\frac{C_{\tilde {g}}}{\eps}\int_{k \delta}^re^{-(\lambda_A-C_{\tilde {g}})(r-v)/\eps}\|X^{\eps}_v-X^{\eps}_{v_{\delta}}\|\,dv.
         \end{eqnarray}

         Integrate the above inequality from $k \delta$ to $(k+1) \delta$, by
\cref{xdelta},  there exists $\eps^\prime_0(\delta)>0$,  for any $\eps<\eps^\prime_0(\delta)$,  such that
         \begin{align*}
                     \int_{k \delta}^{(k+1) \delta}\|Y^\eps_r-\hat Y^\eps_r\|\,dr
\leq & (C_3+C_4 o(\eps^{-1}))\int_{k \delta}^{(k+1) \delta} e^{-(\lambda_A-C_{\tilde {g}}) (r-k \delta)/\eps }\,dr \\
             &+ C_2 \delta^\gamma \frac{C_{\tilde {g}}}{\eps}
 \int_{k \delta}^{(k+1) \delta}\int_{k \delta}^re^{-(\lambda_A-C_{\tilde g})(r-v)/\eps}\,dv dr\\
             \leq&  C \big(\eps (C_3+C_4 o(\eps^{-1}))+ C_2 \delta^{1+\gamma}\big)    \leq C_5 \delta^{1+\gamma}
         \end{align*}
where we take  $CC_2\leq C_5$ and $ C \eps (C_3+C_4 o(\eps^{-1})) \leq C_5 \delta^{1+\gamma}$.

Similarly, for any $t\in [0,T]$, taking $k=\lfloor t / \delta\rfloor$ in (\ref{yy}) and integrating the inequality from $t_\delta$ to $t$, by
\cref{xdelta} again, we have
$
  \int_{t_\delta}^{t}\|Y^\eps_r-\hat Y^\eps_r\|\,dr
\leq C_5 \delta^{1+\gamma}$
for any $\eps<\eps^\prime_0(\delta)$.
     \end{proof}

\begin{lemma}\label{y1-y2}
For the stationary solution and any solution of {\rm (\ref{yhat})}, there exists $\eps^\prime_0(\delta)>0$,  for any $\eps<\eps^\prime_0(\delta)$, such that
\begin{eqnarray*}
\int_{k\delta}^{(k+1)\delta}\|\hat Y^\eps_r-Y^\eps_F(\theta_r\omega_2,X^\eps_{k \delta})\|\,dr
		&\le& C_6 \delta^{1+\gamma},\quad {\rm for \, any}\,\, 0\leq k \leq \lfloor T / \delta\rfloor-1\cr
\int_{t_\delta}^{t}\|\hat Y^\eps_r-Y^\eps_F(\theta_r\omega_2,X^\eps_{k \delta})\|\,dr
		&\le &C_6 \delta^{1+\gamma},\quad {\rm for \, any}\,\, 0\leq t \leq T
	\end{eqnarray*}
hold, where $C_6$ is a constant which is independent of $\eps$ and $\delta$.
\end{lemma}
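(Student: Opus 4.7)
The plan is to compare $\hat Y^\eps$ on the slab $[k\delta,(k+1)\delta]$ with the stationary trajectory $r\mapsto Y^\eps_F(\theta_r\omega_2, X^\eps_{k\delta})$, and exploit the uniform pathwise contraction of the frozen fast flow established in Section \ref{s3-2}. First I would observe that on this slab both processes solve the same mild equation: by its very construction (\ref{yhat}), $\hat Y^\eps$ is the mild solution of
\begin{equation*}
dy=\tfrac{1}{\eps}(-Ay+\tilde g(X^\eps_{k\delta},y))dt+d\omega_{2,\eps}(t)
\end{equation*}
started at $\hat Y^\eps_{k\delta}$, while by the random fixed point property $\phi^{X^\eps_{k\delta},\eps}(s,\theta_{k\delta}\omega_2, Y^\eps_F(\theta_{k\delta}\omega_2, X^\eps_{k\delta}))=Y^\eps_F(\theta_{k\delta+s}\omega_2, X^\eps_{k\delta})$ and the conjugacy with $\tilde\phi^{X^\eps_{k\delta},\eps}$, the map $r\mapsto Y^\eps_F(\theta_r\omega_2, X^\eps_{k\delta})$ is the solution of the same equation started at $Y^\eps_F(\theta_{k\delta}\omega_2, X^\eps_{k\delta})$.

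Setting $D_r:=\hat Y^\eps_r-Y^\eps_F(\theta_r\omega_2, X^\eps_{k\delta})$, the additive noise cancels in the difference and the same energy estimate as in the proof of Lemma \ref{l3} (or equivalently a Grönwall argument applied to the mild formulation, using $(Ay,y)\ge \lambda_A\|y\|^2$ and the Lipschitz bound on $\tilde g$ in its second variable) yields
\begin{equation*}
\|D_r\|\le e^{-(\lambda_A-C_{\tilde g})(r-k\delta)/\eps}\,\|D_{k\delta}\|,\qquad r\in[k\delta,(k+1)\delta],
\end{equation*}
which is precisely the pathwise contraction behind Remark \ref{3.11}(4); this is where the hypothesis $\lambda_A>C_{\tilde g}$ enters decisively.

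Next I would bound $\|D_{k\delta}\|$ uniformly in $k$. Lemma \ref{ybound} gives $\|\hat Y^\eps_{k\delta}\|\le C_3+C_4\,o(\eps^{-1})$, while Remark \ref{3.11}(1)--(2) combined with Lemma \ref{l1}(3) gives
\begin{equation*}
\|Y^\eps_F(\theta_{k\delta}\omega_2, X^\eps_{k\delta})\|\le \|Z^\eps(\theta_{k\delta}\omega_2)\|+\rho^{X^\eps_{k\delta}}(\theta_{k\delta}\omega_2)\le o(\eps^{-1})+C,
\end{equation*}
both uniformly in $k\delta\in[0,T]$ (note that $\rho^x$ may be chosen $\eps$-independent by Remark \ref{3.11}(1), and depends on $x$ only through $\|x\|$, which is controlled by \cref{xbound}). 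Hence $\|D_{k\delta}\|\le C+o(\eps^{-1})$ with constants independent of $k$ and $\delta$. Integrating the exponential decay gains a factor $\eps$:
\begin{equation*}
\int_{k\delta}^{(k+1)\delta}\|D_r\|\,dr\le \frac{\eps}{\lambda_A-C_{\tilde g}}\bigl(1-e^{-(\lambda_A-C_{\tilde g})\delta/\eps}\bigr)\|D_{k\delta}\|\le \frac{\eps}{\lambda_A-C_{\tilde g}}\bigl(C+o(\eps^{-1})\bigr),
\end{equation*}
and the right-hand side equals $C\eps+o(1)$ as $\eps\to 0$. I would therefore choose $\eps^\prime_0(\delta)>0$ small enough (depending on $\delta$) to make this bound $\le C_6\delta^{1+\gamma}$ for every $\eps<\eps^\prime_0(\delta)$, which yields the first inequality. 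The second inequality follows from exactly the same calculation, integrating from $t_\delta=\lfloor t/\delta\rfloor\delta$ up to $t$ instead of up to $(k+1)\delta$; the upper bound is only larger over the longer interval, so the identical choice of $\eps^\prime_0(\delta)$ works.

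The only real obstacle is that the a-priori bound on the initial-data difference $\|D_{k\delta}\|$ is merely of order $o(\eps^{-1})$, owing to the sub-linear growth of the stationary OU-process $Z^\eps$ (Lemma \ref{l1}(3)) and of $\hat Y^\eps$ (Lemma \ref{ybound}). The factor $\eps$ produced by integrating the exponential decay absorbs this loss only as $\eps\to 0$, so $\eps^\prime_0(\delta)$ must genuinely depend on $\delta$; fortunately this is exactly the form of the statement, and since $\eps\cdot o(\eps^{-1})=o(1)$ the scheme closes.
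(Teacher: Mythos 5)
Your proposal is correct and follows essentially the same route as the paper: both proofs exploit that on each slab $\hat Y^\eps$ and $r\mapsto Y^\eps_F(\theta_r\omega_2,X^\eps_{k\delta})$ solve the frozen fast equation, derive the pathwise contraction $\|D_r\|\le e^{-(\lambda_A-C_{\tilde g})(r-k\delta)/\eps}\|D_{k\delta}\|$, bound the initial gap by $C+o(\eps^{-1})$ via Lemmas \ref{xbound}, \ref{ybound}, \ref{l1} and the tempered radius $\rho^x$, and absorb that loss with the factor $\eps$ gained from integrating the exponential, choosing $\eps<\eps^\prime_0(\delta)$ accordingly. The only cosmetic difference is that the paper passes through the Lipschitz dependence of $Y^\eps_F$ on $x$ to reduce to $\rho^0$ and $Y^\eps_F(\cdot,0)$, whereas you bound $\rho^{X^\eps_{k\delta}}$ directly; these are equivalent.
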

             \begin{proof}
                  By the Gronwall-lemma argument, (\ref{eq6}) and Remark \ref{3.11} for $r\in [k \delta, (k+1) \delta],0\leq k \leq \lfloor T / \delta\rfloor-1$, we have
                  \begin{eqnarray}\label{yyy}
                      \|\hat
Y^\eps_r-Y^\eps_F(\theta_r\omega_2,X^\eps_{k \delta})\|
                      \le  e^{-\frac{\lambda_A-C_{\tilde g}}{\eps} (r-k \delta)
}\|\hat
             Y^\eps_{k \delta}-Y^\eps_F(
             \theta_{k \delta}\omega_2,X^\eps_{k \delta})\|
                  \end{eqnarray}

                  Integrate above inequality from $k \delta$ to
$(k+1) \delta$, due to
             $\lambda_A>C_{\tilde g}$, we have
                  \begin{align*}
                      \int_{k \delta}^{(k+1) \delta}& \|\hat
Y^\eps_r-Y^\eps_F(\theta_r\omega_2,X^\eps_{k \delta})\|\,dr\cr
                      \le& \, C \|\hat
             Y^\eps_{k \delta}-Y^\eps_F(
             \theta_{k \delta}\omega_2,X^\eps_{k \delta})\|
\frac{\epsilon}{\lambda_A-C_{\tilde g}}(1-e^{\frac{-(\lambda_A-C_{\tilde g})\delta}{\eps}})\\
                      \le & \,C (\|\hat
Y^\eps_{k \delta}\|+\frac{C_{\tilde g}}{\lambda_A-C_{\tilde g}}\|X^\eps_{k \delta}\|+\|Y_F^\eps
(\theta_{k\delta}\omega_2,0)\|)\frac{\epsilon}{\lambda_A-C_{\tilde g}}\\
                      \le& \,C \sup_{r\in[0,T]}(\|
X^\eps_r\|+\|Y^\eps_r\|+\rho^{0}(\theta_r\omega_2)+\|Z^\eps(\theta_r\omega_2)\|)\frac{\epsilon}{\lambda_A-C_{\tilde g}}.
                  \end{align*}

                  Thus, by Lemmas \ref{l2},  \ref{xbound} and
\ref{ybound}, the
             desired result is obtained.
Similarly, for any $t\in [0,T]$, taking $k=\lfloor t / \delta\rfloor$ in (\ref{yyy}) and integrating the inequality from $t_\delta$ to $t$, by Lemmas \ref{l2},  \ref{xbound} and
\ref{ybound} again, the
             desired result can be obtained.
             \end{proof}

\subsection{Main result}
We present now the main result of this article. Denote $\bar X$ be the mild solution of the averaging equation
	\begin{eqnarray}\label{x-ave}
	\bar X_t= X_0+\int_{0}^{t}\bar f(\bar X_r)\,dr+\int_{0}^{t}h(\bar X_r)d\boldsymbol{\omega_1}(r)
	\end{eqnarray}
	where the Lipschitz continuous function $\bar f$ has been given in {\rm (\ref{conave1})}.
\begin{remark} By Lemma \ref{xdelta}, the estimate for  $\|\bar X_t-\bar X_s\|$ can
             be obtained in a similar way that is $
\|\bar X_t-\bar X_s\|\le  C_2 (t-s)^{\gamma}. $
\end{remark}
\begin{theorem}\label{mainthm}
	Let {\rm (A1)-(A5)} hold and assume further that $\lambda_A>C_{\tilde{g}}$. For any $X_0\in \mathbb{R}^n, Y_0\in \mathbb{R}^m$, as $\epsilon\rightarrow0$ the solution of {\rm (\ref{slowpath})} converges to $\bar X$ which solves following {\rm (\ref{x-ave})}. That is, we have
\begin{align}\label{main}
\|X^\eps-\bar{X}\|_{\infty}+\interleave X^\eps-\bar{X}\interleave_{p-{\rm var},[0, T]}+\interleave R^{X^\eps}-R^{\bar{X}}\interleave_{q-{\rm var},[0, T]^{2}} \rightarrow 0
\end{align}
where $q=\frac{p}{2}$.
\end{theorem}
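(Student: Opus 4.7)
The plan is to adapt the Khasminskii time-discretization method to the controlled rough path framework developed in Sections \ref{s2}--\ref{s3-2}. With a discretization scale $\delta \in (0,1)$ fixed, I insert the piecewise-frozen slow variable $X^\eps_{r_\delta}$ into the drift, replace the fast variable $Y^\eps_r$ by the auxiliary process $\hat Y^\eps_r$ (Lemma \ref{y-yhat}), then by the stationary random fixed point $Y^\eps_F(\theta_r\omega_2, X^\eps_{r_\delta})$ (Lemma \ref{y1-y2}), and finally exploit the ergodic theorem (Lemma \ref{l5}) to replace the resulting time average by the spatial average $\bar f$. The diffusion term is handled through the Lipschitz dependence of the rough integral on its controlled-rough-path integrand; a Gronwall-type argument then propagates the errors. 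Sending $\eps \to 0$ first (for fixed $\delta$) kills the ergodic-theorem contribution, and letting $\delta \to 0$ afterwards kills the H\"older-$\gamma$ discretization residues.

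Concretely, the drift error decomposes as
\begin{align*}
\int_0^t [f(X^\eps_r, Y^\eps_r) - \bar f(\bar X_r)]\,dr = I_1 + I_2 + I_3 + I_4 + I_5,
\end{align*}
where $I_1, I_2, I_3$ replace successively $X^\eps_r \to X^\eps_{r_\delta}$, $Y^\eps \to \hat Y^\eps$, and $\hat Y^\eps \to Y^\eps_F$, each bounded by $C\delta^\gamma$ via \cref{xdelta}, \cref{y-yhat}, \cref{y1-y2} together with the Lipschitz continuity of $f$. The central piece
\begin{align*}
I_4 = \sum_{k} \int_{k\delta}^{(k+1)\delta} [f(X^\eps_{k\delta}, Y^\eps_F(\theta_r\omega_2, X^\eps_{k\delta})) - \bar f(X^\eps_{k\delta})]\,dr
\end{align*}
is treated by the change of variables $r = \eps s$ combined with Remark \ref{3.11}(3), after which Lemma \ref{l5} forces each block to vanish almost surely as $\eps \to 0$, uniformly in $k$ thanks to the uniform bound on $\|X^\eps\|_\infty$ (Lemma \ref{xbound}); since there are $\lfloor T/\delta \rfloor$ blocks, $|I_4| \to 0$ for every fixed $\delta$. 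Finally, $|I_5| \le C\delta^\gamma + C_{\bar f}\int_0^t \|X^\eps_r - \bar X_r\|\,dr$ by Lemma \ref{flip} and the remark preceding the theorem.

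For the diffusion term, both $(h(X^\eps), Dh(X^\eps)h(X^\eps))$ and $(h(\bar X), Dh(\bar X)h(\bar X))$ belong to $\mathcal{D}^{2\alpha}_{\omega_1}$ with $p$-variation norms bounded uniformly in $\eps$ by \cref{xbound,norm}. Standard stability of the sewing map, applied to differences of controlled rough paths, yields on any sub-interval $J \subset [0,T]$
\begin{align*}
\Bigl\| \int_J h(X^\eps)\,d\boldsymbol{\omega_1} - \int_J h(\bar X)\,d\boldsymbol{\omega_1} \Bigr\| \le C\,\Phi(\boldsymbol{\omega_1}, J)\,\mathcal{E}^\eps(J),
\end{align*}
where $\mathcal{E}^\eps(J) := \|X^\eps - \bar X\|_{\infty,J} + \interleave X^\eps - \bar X \interleave_{p-{\rm var},J} + \interleave R^{X^\eps} - R^{\bar X} \interleave_{q-{\rm var},J^2}$ and $\Phi(\boldsymbol{\omega_1}, J)$ is a prefactor in the $p$-variation norm of $\boldsymbol{\omega_1}$ on $J$ that becomes arbitrarily small as $|J| \downarrow 0$. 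Partitioning $[0,T]$ by the stopping times $\tau_i$ of Section \ref{2.3} (for sufficiently small threshold $\nu$) makes this prefactor absorbable, so that on each block one obtains an estimate of the form $\mathcal{E}^\eps([\tau_i, \tau_{i+1}]) \le C[\Psi_i(\eps,\delta) + \mathcal{E}^\eps([0,\tau_i])]$. Iterating over the finitely many blocks (Lemma \ref{solution}) and reassembling via Lemma \ref{var} yields $\mathcal{E}^\eps([0,T]) \le C(\omega)[\Psi(\eps,\delta) + \delta^\gamma]$; passing $\eps \to 0$ and then $\delta \to 0$ establishes (\ref{main}).

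The main obstacle is the low-regularity regime $1/3 < H \le 1/2$: unlike the $H > 1/2$ case treated in \cite{pei2023almost}, the rough integral stability cannot be closed in the supremum norm alone, and one is forced to track the full controlled-rough-path error $\mathcal{E}^\eps$ simultaneously in sup, $p$-variation, and $q$-variation norms. This forces the block-by-block analysis based on the stopping times $\tau_i$, with constants kept uniform in $\eps$ via the rough estimate $N_{\nu,[0,T],p}(\boldsymbol{\omega_1}) \le 1 + \nu^{-p}\interleave\boldsymbol{\omega_1}\interleave_{p-{\rm var},[0,T]}^p$ (which depends only on $\omega_1$, not on $\eps$). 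A second delicate point is that $Y^\eps_F$ itself depends on $\eps$; Remark \ref{3.11}(3) reduces everything to the $\eps=1$ fixed point $Y^1_F$, for which Lemma \ref{l5} applies directly.
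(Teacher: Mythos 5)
Your overall architecture coincides with the paper's: Khasminskii discretization, the auxiliary process $\hat Y^\eps$, the exponentially attracting random fixed point $Y^\eps_F$, an ergodic replacement by $\bar f$, controlled-rough-path stability for the diffusion term tracked simultaneously in the sup, $p$-variation and $q$-variation norms, a greedy stopping-time partition with $\tilde N$ bounded uniformly in $\eps$, and a block-wise Gronwall iteration reassembled via Lemma \ref{var}, with $\delta$ fixed first and then $\eps<\eps_0'(\delta)$. The drift decomposition into five terms instead of eight and the merging of the paper's $I_5,I_6,I_8$ into your $I_5$ are harmless.

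There is, however, one genuine gap, and it sits exactly at the step you call ``the central piece.'' You apply the ergodic replacement to
\begin{equation*}
\int_{k\delta}^{(k+1)\delta}\big[f(X^\eps_{k\delta},\,Y^\eps_F(\theta_r\omega_2,X^\eps_{k\delta}))-\bar f(X^\eps_{k\delta})\big]\,dr ,
\end{equation*}
i.e.\ with the frozen argument $x=X^\eps_{k\delta}$. After the substitution $r\mapsto r/\eps$ this is a difference of ergodic averages evaluated at a point that \emph{moves with} $\eps$. Lemma \ref{l5} only gives, on a full-measure set, pointwise-in-$x$ convergence of $\frac1T\int_0^T(f(x,Y^1_F(\theta_r\omega_2,x))-\bar f(x))\,dr$ as $T\to\infty$; it does not assert convergence uniform over $x$ in a compact set, so ``Lemma \ref{l5} forces each block to vanish'' does not follow as stated --- the uniform bound $\|X^\eps\|_\infty\le C_1$ from Lemma \ref{xbound} controls the range of the argument but not the rate of the ergodic convergence at that argument. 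The paper deliberately routes around this: its term $I_7$ carries the ergodic average along $\bar X_{r_\delta}$, which for fixed $\delta$ is a finite collection of $\eps$-independent points (and, as Remark \ref{3.11}(3) is invoked to stress, depends only on $\omega_1$), so Lemma \ref{l5} applies verbatim to each of the finitely many blocks; the passage from $X^\eps_r$ to $\bar X_r$ is instead absorbed into the Gronwall term $I_5$. Your version can be repaired --- either by imitating the paper and evaluating the ergodic term at $\bar X_{r_\delta}$, or by upgrading Lemma \ref{l5} to locally uniform convergence in $x$ on the ball of radius $C_1$ (cover by finitely many points of the dense set $D$ and use the Lipschitz continuity of $x\mapsto f(x,Y^1_F(\omega_2,x))$ and of $\bar f$) --- but as written the step is not justified and is the one place where your argument does not close.
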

\begin{proof}

In the following proof a constant $C$ appears which can change from inequality to  inequality.  $C$ may depend on $T, C_f,C_h,\|f\|_{\infty},p,\|X_0\|,\|Y_0\|,\interleave \boldsymbol{\omega_1}\interleave_{p-{\rm var},[0, T]}$ and $\interleave \omega_2\interleave_{\gamma,[0,T]}$
but independent of $\delta, \eps,\mu$.
We have to show that on some subset of $\Omega$ of full measure for
every $\mu>0$ there exists an $\eps_0=\eps_0(\mu)$ so that for
$\eps<\eps_0$ we have that the left side of (\ref{main}) is less than
$\mu$. We will fix a $\delta$-partition of $[0,T]$ so that for
\begin{align*}
\|X^\eps-\bar{X}\|_{\infty}+\interleave X^\eps-\bar{X}\interleave_{p-{\rm var},[0, T]}+\interleave R^{X^\eps}-R^{\bar{X}}\interleave_{q-{\rm var},[0, T]^{2}} \leq
2C(\delta^\gamma+\delta^{\gamma_0})\leq\mu
\end{align*}
where $\delta=\delta(\mu)$ and the last inequalities hold when $\eps<\eps_0^\prime(\delta)$. Then
$\eps_0(\mu)$ is given by $\eps_0^\prime(\delta(\mu))$.

Now, we begin to estimate (\ref{main}). For any $s,t\in[0,T] $, let
\begin{eqnarray*}
Q^{\eps}_{t}&=&\int_{0}^{t} f(X^\eps_r,Y^\eps_r) \, dr,\quad \bar Q_{t}=\int_{0}^{t} \bar f(\bar X^\eps_r) \, dr,\cr
Z^\eps_{t}&=&\int_{0}^{t}h(X^\eps_r)\,d\boldsymbol{\omega_1}, \quad \bar {Z}_{t}=\int_{0}^{t}h(\bar X_r)\,d\boldsymbol{\omega_1}.
	\end{eqnarray*}

Then, by (\ref{slowpath}) and  (\ref{x-ave}),  one has
\begin{eqnarray*}
\|X^\eps_{s,t}-\bar{X}_{s,t}\|
\leq \|Q^\eps_{s,t}-\bar Q_{s,t}\|+\|Z^\eps_{s,t}-\bar Z_{s,t}\|.
	\end{eqnarray*}

We firstly consider the estimates of the drift term $\|Q^\eps_{s,t}-\bar Q_{s,t}\|$ into two cases.

{\bf Case~1~:~$t-s \leq\delta$.}
We show that
\begin{align}\label{q0}
\begin{split}
\interleave Q^{\eps}-& \bar Q \interleave_{p-{\rm var}, [s,t]}\\
= & \bigg(\sup _{\mathcal{P}([s,t])} \sum_{[t_{i}, t_{i+1}] \in \mathcal{P}([s,t])}\|Q^\eps_{t_{i}, t_{i+1}}-\bar Q_{t_{i}, t_{i+1}}\|^{p}\bigg)^{\frac{1}{p}}\\
\leq  & \bigg(\sup _{\mathcal{P}([s,t])} \sum_{[t_{i}, t_{i+1}] \in \mathcal{P}([s,t])}\bigg(\int_{t_{i}}^{t_{i+1}}\| f(X^\eps_r,Y^\eps_r)- \bar f(\bar X_r)\| \, dr\bigg)^{p}\bigg)^{\frac{1}{p}}\\
\leq &C  \sup _{\mathcal{P}([s,t])} \sum_{[t_{i}, t_{i+1}] \in \mathcal{P}([s,t])}(t_{i+1}-t_{i})\le C \delta
\end{split}
\end{align}
where $C$ depends on $\|f\|_{\infty}$ and $T$.

{\bf Case~2~:~$t-s > \delta$.} We begin with
\begin{eqnarray}\label{x-x}
\|Q^\eps_{s,t}-\bar Q_{s,t}\|
&\le&
\bigg\|\int_{s}^{t}(f(X^\eps_r,Y^\eps_r)-f(X^\eps_r,\hat Y^\eps_r))\,dr\bigg\|\cr
&&+\bigg\|\int_{s}^{t}(f(X^\eps_r,\hat Y^\eps_r)-f(X^\eps_{r_{\delta}},\hat Y^\eps_r))\,dr\bigg\|\cr
&&+\bigg\|\int_{s}^{t}(f(X^\eps_{r_{\delta}},\hat Y^\eps_r)-f(X^\eps_{r_{\delta}},Y_{F}^\eps(\theta_{r}\omega_2, X^\eps_{r_{\delta}})))\,dr\bigg\|\cr
&&+\bigg\|\int_{s}^{t}(f(X^\eps_{r_{\delta}},Y_{F}^\eps(\theta_r\omega_2, X^\eps_{r_{\delta}}))-f(X^\eps_r,Y_{F}^\eps(\theta_{r}\omega_2, X^\eps_r)))\,dr\bigg\|\cr
&&+\bigg\|\int_{s}^{t}(f(X^\eps_r,Y_{F}^\eps(\theta_r\omega_2,X^\eps_r))-f(\bar X_r,Y_{F}^\eps(\theta_{r}\omega_2, \bar X_r)))\,dr\bigg\|\cr
&&+\bigg\|\int_{s}^{t}(f(\bar X_r,Y_{F}^\eps(\theta_r\omega_2, \bar X_r))-f(\bar X_{r_{\delta}},Y_{F}^\eps(\theta_{r}\omega_2, \bar X_{r_{\delta}})))\,dr\bigg\|\cr
&&+\bigg\|\int_{s}^{t}(f(\bar X_{r_{\delta}},Y_{F}^\eps(\theta_{r}\omega_2, \bar X_{r_{\delta}}))-\bar f(\bar X_{r_{\delta}}))\,dr\bigg\|\cr
		&&+\bigg\|\int_{s}^{t}(\bar f(\bar X_{r_{\delta}})-\bar f(\bar X_r))\,dr\bigg\|
=:\sum_{i=1}^{8}I_i.
	\end{eqnarray}

Based on the Lipschitz continuity of $f,\,\bar f$, $Y_F^\eps$  and Lemma \ref{xdelta} we can estimate $I_2$, $I_4$, $I_6$, and $I_8$
\begin{align*}
I_2+I_4 \le & C_f \int_{s}^{t} \|X^\eps_r-X^\eps_{r_{\delta}}\| \, dr
 \le C \delta^{\gamma}(t-s),\\
I_6+I_8 \le &   C_{\bar f} \int_{s}^{t} \|\bar X_r-\bar X_{r_{\delta}}\|\,dr
\le C \delta^{\gamma}(t-s).
\end{align*}

By \cref{y-yhat}, there exists $\eps^\prime_0(\delta)>0$,  for any $\eps<\eps^\prime_0(\delta)$, we have
\begin{eqnarray*}
		I_1 &\le &  C_f \int_{s}^{(\lfloor \frac{s}{\delta} \rfloor+1)\delta}\|Y^\eps_r- \hat Y^\eps_r\| \, dr +C_f \sum_{k=\lfloor \frac{s}{\delta} \rfloor+1}^{\lfloor \frac{t}{\delta} \rfloor-1}\int_{k\delta}^{(k+1)\delta}\|Y^\eps_r- \hat Y^\eps_r\| \, dr\cr
&&+C_f \int_{\lfloor \frac{t}{\delta} \rfloor \delta}^{t}\|Y^\eps_r- \hat Y^\eps_r\| \, dr\cr
&\leq & C_f \frac{(t-s)}{\delta}\max_{0\leq k \leq \lfloor \frac{T}{\delta} \rfloor -1}\int_{k \delta}^{(k+1) \delta}\|Y^\eps_r-\hat Y^\eps_r\|\,dr+C_f C_5 \delta^{1+\gamma}\cr
		&\le & C \delta^\gamma (t-s)
\end{eqnarray*}
and by \cref{y1-y2},  we have
\begin{eqnarray*}
		I_3 &\le & C_f \int_{s}^{t} \|\hat Y^\eps_r-Y_{F}^\eps (\theta_{r}\omega_2, X^\eps_{r_{\delta}}))\| \, dr\cr
&\le &  C_f \int_{s}^{(\lfloor \frac{s}{\delta} \rfloor+1)\delta}\|\hat Y^\eps_r-Y_{F}^\eps (\theta_{r}\omega_2, X^\eps_{r_{\delta}}))\| \, dr\cr
&&+C_f \sum_{k=\lfloor \frac{s}{\delta} \rfloor+1}^{\lfloor \frac{t}{\delta} \rfloor-1}\int_{k\delta}^{(k+1)\delta}\|\hat Y^\eps_r-Y_{F}^\eps (\theta_{r}\omega_2, X^\eps_{k \delta}))\| \, dr\cr
&&+C_f \int_{\lfloor \frac{t}{\delta} \rfloor \delta}^{t}\|\hat Y^\eps_r-Y_{F}^\eps (\theta_{r}\omega_2, X^\eps_{r_{\delta}}))\| \, dr\cr
&\leq & C_f \frac{(t-s)}{\delta}\max_{0\leq k \leq \lfloor \frac{T}{\delta} \rfloor -1}\int_{k \delta}^{(k+1) \delta}\|\hat Y^\eps_r-Y_{F}^\eps (\theta_{r}\omega_2, X^\eps_{k \delta}))\|\,dr+C_f C_6 \delta^{1+\gamma}\cr
&\le &	 C \delta^\gamma (t-s).
\end{eqnarray*}

Then, for $I_5$, we have
	\begin{align*}
I_5 \le C_f \int_{s}^{t} \|X^\eps_r-\bar X_r\|\,dr.
	\end{align*}

Now, we deal with $I_{7}$.
\begin{eqnarray}
I_7&\leq & \bigg\|\int_{s}^{(\lfloor \frac{s}{\delta} \rfloor+1)\delta}(f(\bar X_{r_{\delta}},Y_{F}^\eps(\theta_{r}\omega_2, \bar X_{r_{\delta}}))-\bar f(\bar X_{r_{\delta}}))\,dr\bigg\|\cr
&&+\bigg\|\sum_{k=\lfloor \frac{s}{\delta} \rfloor+1}^{\lfloor \frac{t}{\delta} \rfloor-1}\int_{k\delta}^{(k+1)\delta}(f(\bar X_{r_{\delta}},Y_{F}^\eps(\theta_{r}\omega_2, \bar X_{r_{\delta}}))-\bar f(\bar X_{r_{\delta}}))\,dr\bigg\|\cr
&&+\bigg\|\int_{\lfloor \frac{t}{\delta} \rfloor \delta}^{t}(f(\bar X_{r_{\delta}},Y_{F}^\eps(\theta_{r}\omega_2, \bar X_{r_{\delta}}))-\bar f(\bar X_{r_{\delta}}))\,dr\bigg\|\cr
&\leq & C  (t-s)^{1-\gamma_0}\delta^{\gamma_0}\cr
&& \quad + \frac{(t-s)}{\delta}\max_{\lfloor \frac{s}{\delta} \rfloor+1 \leq k \leq \lfloor \frac{t}{\delta} \rfloor-1} \bigg\|\int_{k\delta}^{(k+1)\delta}(f(\bar X_{r_{\delta}},Y_{F}^\eps(\theta_{r}\omega_2, \bar X_{r_{\delta}}))-\bar f(\bar X_{r_{\delta}}))\,dr\bigg\|\cr
&\leq & C  (t-s)^{1-\gamma_0}\delta^{\gamma_0}\cr&&+
 \frac{(t-s)}{\delta}\max_{0 \leq k \leq \lfloor \frac{T}{\delta} \rfloor-1} \bigg\|\int_{0}^{(k+1)\delta}(f(\bar X_{r_{\delta}},Y_{F}^\eps(\theta_{r}\omega_2, \bar X_{r_{\delta}}))-\bar f(\bar X_{r_{\delta}}))\,dr\bigg\|\cr&& +
\frac{(t-s)}{\delta}\max_{0 \leq k \leq \lfloor \frac{T}{\delta} \rfloor-1} \bigg\|\int_{0}^{k\delta}(f(\bar X_{r_{\delta}},Y_{F}^\eps(\theta_{r}\omega_2, \bar X_{r_{\delta}}))-\bar f(\bar X_{r_{\delta}}))\,dr\bigg\|
\end{eqnarray}
where $\gamma_0\in(0,1)$ which will be chosen later and $C$ depends on $\|f\|_{\infty}, \gamma_0$. Since
\begin{align*}
&\max_{0\leq k \leq \lfloor \frac{T}{\delta} \rfloor-1} \bigg\|\int_{0}^{(k+1) \delta}(f(\bar X_{k \delta},Y_{F}^\eps(\theta_{r}\omega_2, \bar X_{k \delta}))-\bar f(\bar X_{k \delta}))\,dr\bigg\|\\
\le & \max_{0\leq k \leq \lfloor \frac{T}{\delta} \rfloor-1}  T \frac{\eps}{(k+1) \delta} \bigg\|\int_{0}^{\frac{(k+1) \delta}{\eps}}(f(\bar X_{k \delta},Y_{F}^1(\theta_{r}\omega_2, \bar X_{k \delta}))-\bar f(\bar X_{k \delta}))\,dr\bigg\|
\end{align*}
we have for  $\eps \rightarrow 0 $, $\frac{(k+1) \delta}{\eps} \rightarrow +\infty$ for any $k, 0 \le k\le \lfloor \frac{T}{\delta} \rfloor-1$. In addition we take the maximum over finitely many elements determined by the fixed number $\delta$ given. Following \cref{l3}, we have for every element under the maximum
\begin{eqnarray}
\quad   \max_{0\leq k \leq \lfloor \frac{T}{\delta} \rfloor-1} \frac{T\eps}{(k+1) \delta} \bigg\|\int_{0}^{\frac{(k+1) \delta}{\eps}}(f(\bar X_{k \delta},Y_{F}^1(\theta_{r}\omega_2, \bar X_{k \delta}))-\bar f(\bar X_{k \delta}))\,dr\bigg\| \leq C_\eps
	\end{eqnarray}
where $C_\eps \rightarrow 0,$ as $\eps \rightarrow 0$. We note that by \cref{l3} and Remark \ref{3.11} (3) we can consider as an argument the random variable $\bar X_{k \delta}(\omega_1)$ which is independent of $\omega_2$ inside the integrand of the last integral.	
	Thus, we have for any $\eps<\eps^\prime_0(\delta)$  sufficiently small and the $\delta$ given
	\begin{eqnarray}
		I_7 \le C  (t-s)^{1-\gamma_0}\delta^{\gamma_0}+\frac{C_\eps}{\delta} (t-s)^{1-\gamma_0}T^{\gamma_0}\le C  (t-s)^{1-\gamma_0}\delta^{\gamma_0}.
	\end{eqnarray}

Then, for $t-s>\delta$ and $\eps<\eps^\prime_0(\delta)$, we have
\begin{align} \label{q}
\begin{split}
\|Q^{\eps}_{s,t}-\bar Q_{s,t}\| \leq  C \delta^{\gamma}(t-s)+C  (t-s)^{1-\gamma_0}\delta^{\gamma_0}+C_f \int_{s}^{t} \|X^\eps_r-\bar X_r\|\,dr
\end{split}
\end{align}
where $\gamma_0\in(0,1)$ which will be chosen later, $C$ may depend on $T,\|f\|_{\infty},C_f,C_2,C_5,C_6$ but independent of $\delta, \eps$.

Next, consider the $p$-variation norm of $Q^{\eps}_{s,t}-\bar Q_{s,t}$ directly, by (\ref{q}) and taking $p(1-\gamma_0)>1$,  for $t-s>\delta$ and $\eps<\eps^\prime_0(\delta)$, one has
	\begin{eqnarray}\label{Qp}
&&\interleave Q^{\eps}-\bar Q \interleave_{p-{\rm var}, [s,t]}\cr
&= & \bigg(\sup _{\mathcal{P}([s,t])} \sum_{[t_{i}, t_{i+1}] \in \mathcal{P}([s,t])}\|Q^\eps_{t_{i}, t_{i+1}}-\bar Q_{t_{i}, t_{i+1}}\|^{p}\bigg)^{\frac{1}{p}}\cr
&\leq& C \bigg(\sup _{\mathcal{P}([s,t])} \sum_{[t_{i}, t_{i+1}] \in \mathcal{P}([s,t])}\bigg( \delta^\gamma (t_{i+1}-t_i)+(t_{i+1}-t_i)^{1-\gamma_0}\delta^{\gamma_0}\cr
&&\quad+\int_{t_{i}}^{t_{i+1}}\|X^\eps_r-\bar X_r\| \, dr\bigg)^{p}\bigg)^{\frac{1}{p}}\cr
&\leq  &C \bigg(\sup _{\mathcal{P}([s,t])} \sum_{[t_{i}, t_{i+1}] \in \mathcal{P}([s,t])}\big( \delta^\gamma (t_{i+1}-t_i)\big)^{p}\bigg)^{\frac{1}{p}}\cr
&&+C \bigg(\sup _{\mathcal{P}([s,t])} \sum_{[t_{i}, t_{i+1}] \in \mathcal{P}([s,t])}\big( (t_{i+1}-t_i)^{1-\gamma_0}\delta^{\gamma_0} \big)^{p}\bigg)^{\frac{1}{p}}\cr
&&+ C_f \bigg(\sup _{\mathcal{P}([s,t])} \sum_{[t_{i}, t_{i+1}] \in \mathcal{P}([s,t])}\bigg( \int_{t_{i}}^{t_{i+1}}\|X^\eps_r-\bar X_r\| \, dr\bigg)^{p}\bigg)^{\frac{1}{p}}\cr
&\leq  &C \delta^\gamma \bigg(\sup _{\mathcal{P}([s,t])} \bigg(\sum_{[t_{i}, t_{i+1}] \in \mathcal{P}([s,t])} (t_{i+1}-t_i)\bigg)^{p}\bigg)^{\frac{1}{p}}\cr
&& +C \delta^{\gamma_0}\bigg(\sup _{\mathcal{P}([s,t])} \bigg(\sum_{[t_{i}, t_{i+1}] \in \mathcal{P}([s,t])} (t_{i+1}-t_i)\bigg)^{(1-\gamma_0)p} \bigg)^{\frac{1}{p}}\cr
&&+ C_f \bigg(\sup _{\mathcal{P}([s,t])} \bigg(\sum_{[t_{i}, t_{i+1}] \in \mathcal{P}([s,t])} \int_{t_{i}}^{t_{i+1}}\|X^\eps_r-\bar X_r\| \, dr\bigg)^{p}\bigg)^{\frac{1}{p}}\cr
&\leq & C \delta+ C \delta^{\gamma_0}+C_f \int_{s}^{t}\|X^\eps_r-\bar X_r\| \, dr.
\end{eqnarray}

Thus, by (\ref{q0}) and (\ref{Qp}), for any $s,t\in[0,T]$ and $\eps<\eps^\prime_0(\delta)$, we have
\begin{align}\label{q2}
\begin{split}
\interleave Q^{\eps}-\bar Q \interleave_{p-{\rm var}, [s,t]} \leq   C (\delta+  \delta^{\gamma_0})+C_f \int_{s}^{t}\|X^\eps_r-\bar X_r\| \, dr.
\end{split}
\end{align}

Next, we turn to estimate $\|Z^\eps_{s,t}-
\bar{Z}_{s,t}\|$ by triangle inequality and constructing the form like (\ref{inq-1})  such that
\begin{eqnarray}\label{zzz}
\|Z^\eps_{s,t}-
\bar{Z}_{s,t}\|& \leq&\big\|Z^\eps_{s,t}-
\bar{Z}_{s,t}-(h(X^\eps_s)-h(\bar X_s))\omega_{1,s,t}-[h(X^\eps)-h(\bar{X})]_s^\prime \bbomega_{1,s,t}\big\|\cr
&&+
 \|(h(X^\eps_s)-h(\bar{X}_s)) \omega_{1,s,t}\|+\|[h(X^\eps)-h(\bar{X})]_s^\prime \bbomega_{1,s,t}\| \cr
&=: &I_{91}+I_{92}+I_{93}
\end{eqnarray}
where $[h(X^\eps)-h(\bar{X})]^\prime_s=Dh(X^\eps_s)h(X^\eps_s)-Dh(\bar{X}_s)h(\bar{X}_s)$.

For $I_{91}$, by (\ref{inq-1}), we have
\begin{align}\label{Qz}
\begin{split}
I_{91}
\le &
C_p  \big(\interleave \omega_1\interleave_{p-{\rm var}, [s,t]}\interleave R^{h(X^\eps)}-R^{h(\bar X)}\interleave_{q-{\rm var},[s, t]^{2}}\\
&\quad\quad+ \interleave[h(X^\eps)-h(\bar{X})]^\prime\interleave_{p-{\rm var}, [s,t]}\interleave \bbomega_1 \interleave_{q-{\rm var}, [s,t]^{2}} \big)
\end{split}
\end{align}
where $C_p>1$.

\begin{remark}
The Gubinelli-derivative of the Riemann integral  $\int_{0}^{t}f( X^\eps_r,Y^\eps_r)\,dr$ and $\int_{0}^{t}\bar f( \bar X_r)\,dr$  in this paper is $0$. So, $f( X^\eps,Y^\eps)$ and $\bar f( \bar X)$ are contained in the remainder term  $R^{X^\eps}$ and  $R^{\bar X}$, respectively.
\end{remark}

To estimate $\interleave R^{h(X^\eps)}-R^{h(\bar X)}\interleave_{q-{\rm var},[s, t]^{2}}$, by (\ref{rh}) and
\begin{align}\label{xx}
X^\eps_{s,t}=h(X^\eps_s) \omega_{1,s,t}+R^{X^\eps}_{s,t},\quad
\bar{X}_{s,t}=h(\bar{X}_s) \omega_{1,s,t}+R^{\bar{X}}_{s,t},
\end{align}
we have $$
R_{s, t}^{h(X^\eps)}=h(X^\eps)_{s, t}-D h(X^\eps_s)X^\eps_{s,t}+ D h(X^\eps_s) R^{X^\eps}_{s,t}$$
and
taking the difference with $R_{s, t}^{h(\bar X)}$ where we replace $X^\eps, D h(X^\eps), R^{X^\eps}$ above by $\bar X,D h(\bar X), R^{\bar X}$, respectively, leads to the bound \cite[p.
110]{friz2020course}
	\begin{eqnarray}
&&\| R_{s, t}^{h(X^\eps)}-R_{s, t}^{h(\bar{X})}\|\cr
&\leq & \|h(X^\eps)_{s, t}-D h(X^\eps_s)X^\eps_{s,t}-h(\bar X)_{s, t}+D h(\bar X_s) \bar X_{s,t}\|\cr
&&+\|D h(X^\eps_s) R^{X^\eps}_{s,t}- D h(\bar X_s) R^{\bar X}_{s,t}\|\cr
&\leq & \int^{1}_0 \big(D^2h(X^\eps_s+\theta X^\eps_{s,t})( X^\eps_{s,t}, X^\eps_{s,t})\cr
&&\quad-D^2h(\bar X_s+\theta \bar X_{s,t})( \bar X_{s,t}, \bar X_{s,t})\big) (1-\theta)d\theta\cr
&&+C_h(\|X^\eps_s-\bar X_s\|\|R^{X^\eps}_{s,t}\|+\|R^{X^\eps}_{s,t}-R^{\bar X}_{s,t}\|)\cr
&\leq & C_h\big(3\|X^\eps-\bar X\|_{\infty,[s,t]}\|X^\eps_{s,t}\|^2+\|X^\eps_{s,t}\|\|X^\eps_{s,t}-\bar X_{s,t}\|\cr
&&+\|X^\eps_{s,t}-\bar X_{s,t}\|\|\bar X_{s,t}\|\big)+C_h(\|X^\eps-\bar X\|_{\infty,[s,t]}\|R^{X^\eps}_{s,t}\|+\|R^{X^\eps}_{s,t}-R^{\bar X}_{s,t}\|)\cr
&\leq &3 C_h \|X^\eps-\bar X\|_{\infty,[s,t]}\interleave  X^\eps\interleave_{p-{\rm var},[s,t]}^2\cr
&&+2C_h \interleave X^\eps -\bar X\interleave_{p-{\rm var},[s, t]} (\interleave X^\eps\interleave_{p-{\rm var},[s, t]}+\interleave \bar{X}\interleave_{p-{\rm var},[s, t]})\cr
&&+C_h(\|X^\eps-\bar X\|_{\infty,[s,t]} \interleave R^{X^\eps}\interleave_{q-{\rm var},[s, t]^2}+ \interleave R^{X^\eps}-R^{\bar X}\interleave_{q-{\rm var},[s,t]^2}).
\end{eqnarray}

\begin{remark}\label{qcontrol}
Since $q=p/2$, by (\ref{controlex}), it is not difficulty to verify that
\begin{align*}
\begin{split}
\|X^\eps-\bar X\|_{\infty,[s,t]}^q,
\interleave  X^\eps\interleave_{p-{\rm var},[s,t]}^{2q}, \big(\interleave X^\eps -\bar X\interleave_{p-{\rm var},[s, t]} \cdot \interleave X^\eps\interleave_{p-{\rm var},[s, t]})\big)^q,\\
\big(\interleave X^\eps -\bar X\interleave_{p-{\rm var},[s, t]}  \cdot \interleave \bar{X}\interleave_{p-{\rm var},[s, t]})\big)^q, \interleave R^{X^\eps}\interleave_{q-{\rm var},[s, t]^2}^q, \interleave R^{X^\eps}-R^{\bar X}\interleave_{q-{\rm var},[s,t]^2}^q
\end{split}
\end{align*}
are all control functions.
\end{remark}

Then, by Remark \ref{qcontrol} and Lemma \ref{control} (ii), one has
\begin{align}\label{rhx}
\begin{split}
\interleave & R^{h(X^\eps)}-R^{h(\bar{X})}\interleave_{q-{\rm var},[s,t]^2}\\
\leq & 3 C_h L_{5,q}^{1/q}\|X^\eps-\bar X\|_{\infty,[s,t]}\interleave  X^\eps\interleave_{p-{\rm var},[s,t]}^2\\
&+2C_h L_{5,q}^{1/q}\interleave X^\eps -\bar X\interleave_{p-{\rm var},[s, t]} (\interleave X^\eps\interleave_{p-{\rm var},[s, t]}+\interleave \bar{X}\interleave_{p-{\rm var},[s, t]})\\
&+C_hL_{5,q}^{1/q}(\|X^\eps-\bar X\|_{\infty,[s,t]} \interleave R^{X^\eps}\interleave_{q-{\rm var},[s, t]^2}+ \interleave R^{X^\eps}-R^{\bar X}\interleave_{q-{\rm var},[s,t]^2}).
\end{split}
\end{align}
where $L_{5,q}^{1/q} \geq 1$ is defined in Lemma \ref{control} for $n=5$.

Then, we deal with $\interleave[h(X^\eps)-h(\bar{X})]^\prime\interleave_{p-{\rm var}, [s,t]}$. Because the following inequality
\begin{align}\label{inq-2}
\begin{split}
\|\sigma(u_{1})-\sigma(v_{1})-\sigma(u_{2})+\sigma(v_{2})\| \leq & C_{\sigma}\|u_{1}-v_{1}-u_{2}+v_{2}\|\\
&+C_{\sigma}\|u_{1}-u_{2}\|(\|u_{1}-v_{1}\|+\|u_{2}-v_{2}\|)
\end{split}
\end{align}
holds where $\sigma$ is differentiable \cite[Lemma 7.1]{Nualart2002} and replacing $\sigma(\cdot)$ by $Dh(\cdot)h(\cdot)$, we have
\begin{align}\label{9212}
\begin{split}
\|(D h(X^\eps) h(X^\eps))_{s, t}&-(D h(\bar{X})h(\bar{X}))_{s, t}\|\\
 \leq & 2C^2_{h}\big(\|X^\eps_{s, t}-\bar{X}_{s, t}\|+\|X^\eps_t-\bar{X}_t\|(\|X^\eps_{s, t}\|+\|\bar{X}_{s, t}\|)\big) \\
 \leq &2C^2_{h}\big(\interleave X^\eps-\bar{X}\interleave_{p-{\rm var},[s, t]}\\
&+\|X^\eps-\bar{X}\|_{\infty,[s, t]}(\interleave X^\eps \interleave_{p-{\rm var},[s, t]}+\interleave \bar{X} \interleave_{p-{\rm var},[s, t]})\big).
\end{split}
\end{align}

Because $\interleave X^\eps-\bar{X}\interleave_{p-{\rm var},[s, t]}^p$, $\|X^\eps-\bar{X}\|_{\infty,[s, t]}^p$, $\interleave X^\eps \interleave_{p-{\rm var},[s, t]}^p$ and $\interleave \bar{X} \interleave_{p-{\rm var},[s, t]}^p$ are all control functions, then by Lemma \ref{control} (i), one has
\begin{align}\label{hh}
\begin{split}
\interleave [h(X^\eps)&-h(\bar{X})]^\prime\interleave_{p-{\rm var}, [s,t]}\\
 \leq & 2C^2_{h}L_{3,p}^{1/p}\big(\interleave X^\eps-\bar{X}\interleave_{p-{\rm var},[s, t]}\\
&+\|X^\eps-\bar{X}\|_{\infty,[s, t]}(\interleave X^\eps \interleave_{p-{\rm var},[s, t]}+\interleave \bar{X} \interleave_{p-{\rm var},[s, t]})\big).
\end{split}
\end{align}

Then, by (\ref{Qz}),(\ref{rhx}) and (\ref{hh}), we have
	\begin{eqnarray}\label{I93}
I_{91}
&\leq &
C_p  L_{5,q}^{1/q}\interleave \omega_1\interleave_{p-{\rm var}, [s,t]}\big(3 C_h \|X^\eps-\bar X\|_{\infty,[s,t]}\interleave  X^\eps\interleave_{p-{\rm var},[s,t]}^2\cr
&&+2C_h \interleave X^\eps -\bar X\interleave_{p-{\rm var},[s, t]} (\interleave X^\eps\interleave_{p-{\rm var},[s, t]}+\interleave \bar{X}\interleave_{p-{\rm var},[s, t]})\cr
&&+C_h(\|X^\eps-\bar X\|_{\infty,[s,t]} \interleave R^{X^\eps}\interleave_{q-{\rm var},[s, t]^2}+ \interleave R^{X^\eps}-R^{\bar X}\interleave_{q-{\rm var},[s,t]^2})\big)\cr
&&+ 2 C_p L_{3,p}^{1/p}C^2_{h} \interleave \bbomega_1 \interleave_{q-{\rm var}, [s,t]^{2}}\big(\interleave X^\eps-\bar{X}\interleave_{p-{\rm var},[s, t]}\cr
&&+\|X^\eps-\bar{X}\|_{\infty,[s, t]}(\interleave X^\eps \interleave_{p-{\rm var},[s, t]}+\interleave \bar{X} \interleave_{p-{\rm var},[s, t]})\big)=:\Theta
\end{eqnarray}
where $L_{3,p}^{1/p} \geq 1$ is defined in Lemma \ref{control} for $n=3$ and
	\begin{eqnarray*}
 \Theta
&=&
3 C_p  L_{5,q}^{1/q} C_h \interleave \omega_1\interleave_{p-{\rm var}, [s,t]} \|X^\eps-\bar X\|_{\infty,[s,t]}\interleave  X^\eps\interleave_{p-{\rm var},[s,t]}^2\cr
&&+2C_p L_{5,q}^{1/q} C_h\interleave \omega_1\interleave_{p-{\rm var}, [s,t]} \interleave X^\eps -\bar X\interleave_{p-{\rm var},[s, t]}(\interleave X^\eps\interleave_{p-{\rm var},[s, t]}+\interleave \bar{X}\interleave_{p-{\rm var},[s, t]})\cr
&&+C_p  L_{5,q}^{1/q} C_h \interleave \omega_1\interleave_{p-{\rm var}, [s,t]}\|X^\eps-\bar X\|_{\infty,[s,t]} \interleave R^{X^\eps}\interleave_{q-{\rm var},[s, t]^2}\cr
&&+C_p  L_{5,q}^{1/q} C_h \interleave \omega_1\interleave_{p-{\rm var}, [s,t]}\interleave R^{X^\eps}-R^{\bar X}\interleave_{q-{\rm var},[s,t]^2}\cr
&&+ 2 C_p L_{3,p}^{1/p}C^2_{h} \interleave \bbomega_1 \interleave_{q-{\rm var}, [s,t]^{2}}\interleave X^\eps-\bar{X}\interleave_{p-{\rm var},[s, t]}\cr
&&+ 2 C_p L_{3,p}^{1/p} C^2_{h} \interleave \bbomega_1 \interleave_{q-{\rm var}, [s,t]^{2}}\|X^\eps-\bar{X}\|_{\infty,[s, t]}(\interleave X^\eps \interleave_{p-{\rm var},[s, t]}+\interleave \bar{X} \interleave_{p-{\rm var},[s, t]}).
\end{eqnarray*}

For $I_{92}+I_{93}$, by (A2), we have
\begin{align}\label{91}
\begin{split}
I_{92}+I_{93} \leq & C_{h}\|X^\eps-\bar{X}\|_{\infty,[s, t]}\interleave \omega_1 \interleave_{p-{\rm var},[s, t]}\\
&+2C_h^2 \|X^\eps-\bar{X}\|_{\infty,[s, t]} \interleave \bbomega_1\interleave_{q-{\rm var},[s, t]^{2}}.
\end{split}
\end{align}

Then, by (\ref{91}) and (\ref{I93}), we have
	\begin{eqnarray}\label{z-z}
\|Z^\eps_{s,t}-
\bar{Z}_{s,t}\| &\leq & I_{91}+I_{92}+I_{93}\cr
&\leq & C_{h}\interleave \omega_1 \interleave_{p-{\rm var},[s, t]}\|X^\eps-\bar{X}\|_{\infty,[s, t]}\cr
 &&+
2C_h^2 \interleave \bbomega_1\interleave_{q-{\rm var},[s, t]^{2}} \|X^\eps-\bar{X}\|_{\infty,[s, t]} +\Theta.
\end{eqnarray}

Denote
\begin{align*}
\Psi^{p,q}_{\infty,[s,t]}(X^\eps,\bar X,  R^{X^\eps}):=&\big(1+\interleave X^\eps\interleave_{p-{\rm var},[s, t]}+\interleave  X^\eps\interleave_{p-{\rm var},[s, t]}^2\\
&+\interleave \bar{X}\interleave_{p-{\rm var},[s,t]}+ \interleave R^{X^\eps}\interleave_{q-{\rm var},[s, t]^2}\big)\\
d_{\infty,[s,t]}^{p,q}(X^\eps,\bar X;R^{X^\eps},R^{\bar X} )
:=&\big(\|X^\eps-\bar X\|_{\infty,[s,t]}+\interleave X^\eps -\bar X\interleave_{p-{\rm var},[s, t]}\\
&+
\interleave R^{X^\eps}-R^{\bar X}\interleave_{q-{\rm var},[s,t]^2}\big)
\end{align*}
and
note that
\begin{align*}
 \interleave X^\eps\interleave_{p-{\rm var},[s, t]}^p,  \interleave  X^\eps\interleave_{p-{\rm var},[s,t]}^{2p},\interleave \bar{X}\interleave_{p-{\rm var},[s, t]}^p,  \interleave R^{X^\eps}\interleave_{q-{\rm var},[s, t]^2},\\ \|X^\eps-\bar{X}\|_{\infty,[s, t]}^p,  \interleave X^\eps-\bar{X}\interleave_{p-{\rm var},[s, t]}^p, \interleave R^{X^\eps}-R^{\bar X}\interleave_{q-{\rm var},[s,t]^2}^p
\end{align*}
 are all control functions and $C_p>1$, then by Lemma \ref{control} (i), for any $s,t\in[0,T]$, one has
\begin{align}
\begin{split}
\interleave Z^\eps-&\bar{Z}\interleave_{p-{\rm var},[s, t]}\\
\leq & 14 C_p L_{p,q} (C _{h}^{2} \interleave \boldsymbol{\omega_1}\interleave_{p-{\rm var},[s, t]}^{2} \vee C_{h}\interleave \boldsymbol{\omega_1}\interleave_{p-{\rm var},[s, t]}) \\
&\quad  \times\Psi^{p,q}_{\infty,[s,t]}(X^\eps,\bar X,  R^{X^\eps})d_{\infty,[s,t]}^{p,q}(X^\eps,\bar X;R^{X^\eps},R^{\bar X} )
\end{split}
\end{align}
where $L_{p,q}\geq 1$ may depends on $L^{1/p}_{n,p}, L^{1/q}_{n,q}$ for some $n$, we also use the fact, see (\ref{xxx}), that
\begin{align}\label{chh}
\begin{split}
\interleave \omega_1 \interleave_{p-{\rm var},[s, t]} \leq \interleave \boldsymbol{\omega_1}\interleave_{p-{\rm var},[s, t]} \, \, \interleave \bbomega_1 \interleave_{q-{\rm var}, [s,t]^{2}}\leq \interleave \boldsymbol{\omega_1}\interleave_{p-{\rm var},[s, t]}^{2}.
\end{split}
\end{align}

Then, together with the estimate of the drift term (\ref{q2}), for any $s,t\in[0,T]$ and $\eps<\eps^\prime_0(\delta)$, we have
\begin{align}\label{xpvar}
\begin{split}
\interleave X^\eps-\bar{X}\interleave_{p-{\rm var},[s, t]}
\leq&  C_f \int_{s}^{t} \|X^\eps_r-\bar X_r\|\,dr+C (\delta^\gamma + \delta^{\gamma_0})\\
&+ 14 C_pL_{p,q} (C _{h}^{2} \interleave \boldsymbol{\omega_1}\interleave_{p-{\rm var},[s, t]}^{2} \vee C_{h}\interleave \boldsymbol{\omega_1}\interleave_{p-{\rm var},[s, t]}) \\
&\quad  \times\Psi^{p,q}_{\infty,[s,t]}(X^\eps,\bar X,  R^{X^\eps})d_{\infty,[s,t]}^{p,q}(X^\eps,\bar X;R^{X^\eps},R^{\bar X} )
\end{split}
\end{align}
where $\gamma_0<1-1/p$, $C$ may depend on $T,\|f\|_{\infty},C_f,C_2,C_5,C_6$ but independent of $\delta, \eps$.

Now, we estimate $\| X^\eps-\bar{X}\|_{\infty,[s, t]} $ by the fact that
\begin{align}\label{xinf} \| X^\eps-\bar{X}\|_{\infty,[s, t]} \le \| X^\eps_s-\bar{X}_s\|+
\interleave X^\eps-\bar{X}\interleave_{p-{\rm var},[s, t]}.
\end{align}

Next, for $\interleave R^{X^\eps}-R^{\bar{X}}\interleave_{q-{\rm var},[s, t]^{2}}$, by (\ref{xx}), we begin with
\begin{align}\label{RRR}
\begin{split}
\|R_{s,t}^{X^\eps}&-R_{s,t}^{\bar X}\| \\
= & \|X_{s,t}^{\eps}-\bar X_{s,t}-(h(X^\eps_s)-h(\bar X_s))\omega_{1,s,t}\|
\\
\leq& \|Q_{s,t}^\eps- \bar Q_{s,t}\|+\|[h(X^\eps)-h(\bar{X})]_s^\prime \bbomega_{1,s,t}\|\\
&+\|Z_{s,t}^\eps- \bar Z_{s,t}-(h(X^\eps_s)-h(\bar X_s))\omega_{1,s,t}-[h(X^\eps)-h(\bar{X})]_s^\prime \bbomega_{1,s,t}\|
\\
= & \|Q_{s,t}^\eps- \bar Q_{s,t}\|+I_{91}+I_{93}.
\end{split}
\end{align}

Since $q=p/2$, in what follows, we can chose a new $\gamma_0$ such that $\gamma_0<1-1/q$ which implies that $\gamma_0<1-1/p$ still holds.
Then, similar to the estimate of (\ref{q2}), for any $s,t\in[0,T]$ and $\eps<\eps^\prime_0(\delta)$, one has
\begin{align}\label{Qq}
\|Q_{s,t}^\eps- \bar Q_{s,t}\| \leq \interleave Q^\eps- \bar Q\interleave_{q-{\rm var},[s, t]}
\leq  C (\delta+ \delta^{\gamma_0})+C_f \int_{s}^{t}\|X^\eps_r-\bar X_r\| \, dr
\end{align}
where $\gamma_0<1-1/q$, $C$ may depend on $T,\|f\|_{\infty},C_f,C_2,C_5,C_6$ but independent of $\delta, \eps$.

Next, using Remark \ref{qcontrol}, Lemma \ref{control} (ii) and (\ref{chh}) to obtain the $q-$variation norm of the terms $I_{92}+I_{93}$
and by (\ref{chh}), together with (\ref{Qq}), for any $s,t\in[0,T]$ and $\eps<\eps^\prime_0(\delta)$, we obtain
\begin{align*}
\interleave R^{X^\eps}-R^{\bar{X}}\interleave_{q-{\rm var},[s, t]^{2}}
\leq & C_f \int_{s}^{t} \|X^\eps_r-\bar X_r\|\,dr+C (\delta^\gamma+\delta^{\gamma_0})\\
&+ 13 C_p L_{p,q} \big(C _{h}^{2} \interleave \boldsymbol{\omega_1}\interleave_{p-{\rm var},[s, t]}^{2} \vee C_{h}\interleave \boldsymbol{\omega_1}\interleave_{p-{\rm var},[s, t]}\big) \\
& \quad \times\Psi^{p,q}_{\infty,[s,t]}(X^\eps,\bar X,  R^{X^\eps})d_{\infty,[s,t]}^{p,q}(X^\eps,\bar X; R^{X^\eps},R^{\bar X}\big).
\end{align*}

Furthermore, choosing  $\gamma_0<1-1/q$,  for any $s,t\in[0,T]$ and $\eps<\eps^\prime_0(\delta)$, we have
\begin{align*}
d_{\infty,[s,t]}^{p,q}(X^\eps,\bar X; R^{X^\eps},R^{\bar X}\big)
\leq & 3 C_f \int_{s}^{t} \|X^\eps_r-\bar X_r\|\,dr+ C (\delta^\gamma+\delta^{\gamma_0})+\|X^\eps_s-\bar X_s\|\\
&+ 41 C_p L_{p,q} \big(C _{h}^{2} \interleave \boldsymbol{\omega_1}\interleave_{p-{\rm var},[s, t]}^{2} \vee C_{h}\interleave \boldsymbol{\omega_1}\interleave_{p-{\rm var},[s, t]}\big) \\
& \quad \times\Psi^{p,q}_{\infty,[s,t]}(X^\eps,\bar X,  R^{X^\eps})d_{\infty,[s,t]}^{p,q}(X^\eps,\bar X; R^{X^\eps},R^{\bar X}\big)
\end{align*}
where $C$ may depend on $T,\|f\|_{\infty},C_f,C_2,C_5,C_6$ but independent of $\delta, \eps$.

 Now, we choose \begin{align*}
 	\tilde{\nu}=\big(82 C_p L_{p,q} C_h\Psi^{p,q}_{\infty,[0,T]}(X^\eps,\bar X,  R^{X^\eps})\big)^{-1}.
\end{align*}  and
let $$
\tilde{\tau}_{0}=0, \quad \tilde{\tau}_{i+1}:=\inf \left\{t>\tilde{\tau}_{i}:\interleave \boldsymbol{\omega_1}\interleave_{p-{\rm var},\left[\tilde{\tau}_{i}, t\right]}=\tilde{\nu}\right\} \wedge T
$$
such that
\begin{align*}41 C_p L_{p,q} C _{h}& \interleave \boldsymbol{\omega_1}\interleave_{p-{\rm var},[\tilde{\tau}_{i}, \tilde{\tau}_{i+1}]}\Psi^{p,q}_{\infty,[0,T]}(X^\eps,\bar X,  R^{X^\eps})\leq \frac{1}{2}.
\end{align*}

Since  $C_p>1$ and $\Psi^{p,q}_{\infty,[0,T]}(X^\eps,\bar X,  R^{X^\eps})\geq 1$, for any  $s,t\in [\tilde{\tau}_{i}, \tilde{\tau}_{i+1}]$,  we obtain that $C _{h}^{2} \interleave \boldsymbol{\omega_1}\interleave_{p-{\rm var},[s, t]}^{2}\leq C _{h} \interleave \boldsymbol{\omega_1}\interleave_{p-{\rm var},[s, t]}.$  Then, for any $s,t\in[0,T]$ and $\eps<\eps^\prime_0(\delta)$, we have
\begin{align*}
d_{\infty,[s,t]}^{p,q}(X^\eps,\bar X;R^{X^\eps},R^{\bar X} )
\leq&  6 C_f \int_{s}^{t}d_{\infty,[s,r]}^{p,q}(X^\eps,\bar X;R^{X^\eps},R^{\bar X} )\,dr\\
&+ (2+6 C_f(t-s))\|X^\eps_s-\bar X_s\|+C (\delta^\gamma+\delta^{\gamma_0}).
\end{align*}

Applying the Gronwall-lemma \cite[Lemma 6.1, p
89]{amann2011ordinary},  for any $t\in[\tilde{\tau}_{i},\tilde{\tau}_{i+1}]$ and $\eps<\eps^\prime_0(\delta)$, we have
\begin{eqnarray}\label{dinfty}
&&d^{p,q}_{\infty,[\tilde{\tau}_{i}, t]}(X^\eps,\bar X;R^{X^\eps},R^{\bar X} )\cr
&\leq & (2+6C_f(\tilde{\tau}_{i+1}-\tilde{\tau}_{i}))\| X^\eps_{\tilde{\tau}_{i}}-\bar{X}_{\tilde{\tau}_{i}}\|+C (\delta^\gamma+\delta^{\gamma_0})\cr
&&+6 C_f \int_{\tilde{\tau}_{i}}^{t} e^{6 C_f(t-r)}\big((2+6C_f(\tilde{\tau}_{i+1}-\tilde{\tau}_{i}))\| X^\eps_{\tilde{\tau}_{i}}-\bar{X}_{\tilde{\tau}_{i}}\|+C (\delta^\gamma+\delta^{\gamma_0})\big)\,dr\cr
&\leq & \big((2+6C_f(\tilde{\tau}_{i+1}-\tilde{\tau}_{i}))\| X^\eps_{\tilde{\tau}_{i}}-\bar{X}_{\tilde{\tau}_{i}}\|+C (\delta^\gamma+\delta^{\gamma_0})\big)\cr
&&\quad \times\Big(1+
6 C_f \int_{\tilde{\tau}_{i}}^{t} e^{6 C_f(t-r)}\,dr\Big) \cr
&\leq & \big(C_{f,T}\| X^\eps_{\tilde{\tau}_{i}}-\bar{X}_{\tilde{\tau}_{i}}\|+C (\delta^\gamma+\delta^{\gamma_0}) \big)e^{6 C_{f}(t-\tilde{\tau}_{i})}
\end{eqnarray}
where $C_{f,T}>0$ depends on $C_f, T$ and appears often in the following proof and it may change form line to line.

We now in position to consider $\| X^\eps -\bar{X}\|_{\infty}$.
\begin{align*}
\| X^\eps_{\tilde{\tau}_{i+1}}&-\bar{X}_{\tilde{\tau}_{i+1}}\|
 \leq  \| X^\eps-\bar{X}\|_{\infty,[\tilde{\tau}_{i}, \tilde{\tau}_{i+1}]}\\
\leq& \big(C_{f,T}\| X^\eps_{\tilde{\tau}_{i}}-\bar{X}_{\tilde{\tau}_{i}}\|+C (\delta^\gamma+\delta^{\gamma_0})\big)e^{6 C_f(\tilde{\tau}_{i+1}-\tilde{\tau}_{i})}\\
\leq& C_{f,T}\| X^\eps_{\tilde{\tau}_{i}}-\bar{X}_{\tilde{\tau}_{i}}\|e^{6 C_f(\tilde{\tau}_{i+1}-\tilde{\tau}_{i})}+C (\delta^\gamma+\delta^{\gamma_0})\\
\leq& C_{f,T}\| X^\eps-\bar{X}\|_{\infty,[\tilde{\tau}_{i-1}, \tilde{\tau}_{i}]}e^{6 C_f(\tilde{\tau}_{i+1}-\tilde{\tau}_{i})}+C (\delta^\gamma+\delta^{\gamma_0}) \\
\leq& C_{f,T}(C_{f,T}\| X^\eps_{\tilde{\tau}_{i-1}}-\bar{X}_{\tilde{\tau}_{i-1}}\|e^{6 C_f(\tilde{\tau}_{i}-\tilde{\tau}_{i-1})}+C (\delta^\gamma+\delta^{\gamma_0}))e^{6 C_f(\tilde{\tau}_{i+1}-\tilde{\tau}_{i})}\\
&+C (\delta^\gamma+\delta^{\gamma_0})\\
\leq& C_{f,T}\| X^\eps_{\tilde{\tau}_{i-1}}-\bar{X}_{\tilde{\tau}_{i-1}}\|e^{6 C_f(\tilde{\tau}_{i+1}-\tilde{\tau}_{i-1})}+C (\delta^\gamma+\delta^{\gamma_0})
\end{align*}
where $C$ may depend on $T,\|f\|_{\infty},C_f,C_2,C_5,C_6$ but independent of $\delta, \eps$.

As a result, for any $\eps<\eps^\prime_0(\delta)$, we obtain
\begin{align}\label{fifi}
\| X^\eps_{\tilde{\tau}_{i+1}}-\bar{X}_{\tilde{\tau}_{i+1}}\|
\leq C_{f,T,\tilde{N}}\| X^\eps_{0}-\bar{X}_{0}\|e^{6 C_f T}+C (\delta^\gamma+\delta^{\gamma_0})\leq  C (\delta^\gamma+\delta^{\gamma_0})
\end{align}
 where $X^\eps_{0}=\bar{X}_{0}$ and $C_{f,T,\tilde{N}}>0$ depends on $C_f, T,\tilde{N}$ and may change form line to line. $\tilde{N}$ is the number of stopping times $\{\tilde{\tau}_i\}_{i\in \mathbb{N}}$ and by Lemma \ref{var}
\begin{align}\label{NN}
\tilde{N}\le \tilde{\nu}^{-p}  \interleave \boldsymbol{\omega_1}\interleave^p_{p-{\rm var},[0, T]}+1.
\end{align}

Then, for any $\eps<\eps^\prime_0(\delta)$, by  (\ref{dinfty}) and (\ref{fifi}),  we show
\begin{align}\label{ddd}
\begin{split}
d^{p,q}_{\infty,[\tilde{\tau}_{i}, \tilde{\tau}_{i+1}]}(X^\eps,\bar X;R^{X^\eps},R^{\bar X} )
\leq & \big(C_{f,T}\| X^\eps_{\tilde{\tau}_{i}}-\bar{X}_{\tilde{\tau}_{i}}\|+C (\delta^\gamma+\delta^{\gamma_0}) \big)e^{6 C_{f}(\tilde{\tau}_{i+1}-\tilde{\tau}_{i})}\\
\leq & C_{f,T,\tilde{N}}\| X^\eps_{0}-\bar{X}_{0}\|+C (\delta^\gamma+\delta^{\gamma_0})\leq C (\delta^\gamma+\delta^{\gamma_0})
\end{split}
\end{align}
where $C$ may depend on $T,\|f\|_{\infty},C_f,C_2,C_5,C_6$ and $\tilde{N}$, but independent of $\delta, \eps$.

Now, for any $\eps<\eps^\prime_0(\delta)$ by (\ref{NN}) and (\ref{ddd}), we have
\[\| X^\eps -\bar{X}\|_{\infty,[0,T]}\leq \sum_{i=0}^{\tilde{N}-1}\| X^\eps-\bar{X}\|_{\infty, [\tilde{\tau}_{i}, \tilde{\tau}_{i+1}]} \leq C (\delta^\gamma+\delta^{\gamma_0})\]
 and  together with Lemma \ref{var}, we obtain
\begin{align*}
\begin{split}
\interleave X^\eps-\bar{X}\interleave_{p-{\rm var},[0, T]} &+\interleave R^{X^\eps}-R^{\bar{X}}\interleave_{q-{\rm var},[0, T]^{2}}\\
 \leq& (\tilde{N}-1)^{\frac{p-1}{p}}\sum_{i=0}^{\tilde{N}-1}\interleave X^\eps-\bar{X}\interleave_{p-{\rm var},[\tilde{\tau}_{i}, \tilde{\tau}_{i+1}]}\\&+
 (\tilde{N}-1)^{\frac{q-1}{q}}\sum_{i=0}^{\tilde{N}-1}\interleave R^\eps-\bar{R}\interleave_{q-{\rm var},[\tilde{\tau}_{i}, \tilde{\tau}_{i+1}]^2}\\
\leq & C (\delta^\gamma+\delta^{\gamma_0}).
\end{split}
\end{align*}

Finally,  choosing $\gamma_0<1-1/q$ and fixed $\delta(\mu)$ so that for any $\eps<\eps_{0}(\mu):=\eps^\prime_0(\delta(\mu))$,
$C (\delta^\gamma+\delta^{\gamma_0})  \leq \frac{\mu}{2}$ holds.
\end{proof}
\bibliographystyle{siam}
\bibliography{references}
\end{document}